\title[Asymptotic stability of peakons]{Asymptotic stability of peakons for the Novikov equation}
\author[J.M. Palacios]{Jos\'e Manuel Palacios}
\address{Institut Denis Poisson, Universit\'e de Tours, Universit\'e d'Orleans, CNRS, Parc Grandmont 37200, Tours, France}
\email{jose.palacios@etu.univ-tours.fr}
\newcommand{\be}{\begin{equation}}
\newcommand{\ee}{\end{equation}}
\newcommand{\bp}{\begin{proof}}
\newcommand{\ep}{\end{proof}}
\newcommand{\bel}{\begin{equation}\label}
\newcommand{\eeq}{\end{equation}}
\newcommand{\bea}{\begin{eqnarray}}
\newcommand{\eea}{\end{eqnarray}}
\newcommand{\bee}{\begin{eqnarray*}}
\newcommand{\eee}{\end{eqnarray*}}
\newcommand{\ben}{\begin{enumerate}}
\newcommand{\een}{\end{enumerate}}
\newcommand{\R}{\mathbb{R}}
\newcommand{\N}{\mathbb{N}}
\newcommand{\supp}{\operatorname{supp}}
\newcommand{\sgn}{\operatorname{sgn}}
\newtheorem{thm}{Theorem}[section]
\newtheorem{cor}[thm]{Corollary}
\newtheorem{lem}[thm]{Lemma}
\newtheorem{prop}[thm]{Proposition}
\newtheorem{defn}[thm]{Definition}
\theoremstyle{remark}
\newtheorem{rem}{Remark}[section]
\definecolor{codegreen}{rgb}{0,0.6,0}
\definecolor{codegray}{rgb}{0.5,0.5,0.5}
\definecolor{codepurple}{rgb}{0.58,0,0.82}
\definecolor{backcolour}{rgb}{0.95,0.95,0.92}
\lstdefinestyle{mystyle}{
	backgroundcolor=\color{backcolour},   
	commentstyle=\color{codegreen},
	keywordstyle=\color{magenta},
	numberstyle=\tiny\color{codegray},
	stringstyle=\color{codepurple},
	basicstyle=\footnotesize,
	breakatwhitespace=false,         
	breaklines=true,                 
	captionpos=b,                    
	keepspaces=true,                 
	numbers=left,                    
	numbersep=5pt,                  
	showspaces=false,                
	showstringspaces=false,
	showtabs=false,                  
	tabsize=2
}
\numberwithin{equation}{section}
\pgfplotsset{compat=newest}
\theoremstyle{definition}
\numberwithin{ej}{section}
\begin{document}





\renewcommand{\sectionmark}[1]{\markright{\thesection.\ #1}}
\renewcommand{\headrulewidth}{0.5pt}
\renewcommand{\footrulewidth}{0.5pt}
\begin{abstract}
The Novikov equation is an integrable Camassa-Holm type equation with a cubic nonlinearity. One of the most important features of this equation is the existence of peaked traveling waves, also called peakons. This paper aims to prove the asymptotic stability of peakon solutions under $H^1(\R)$-perturbations satisfying that their associated momentum density defines a non-negative Radon measure. Motivated by Molinet's work \cite{Mo,Mo2,Mo3}, we shall first prove a Liouville property for $H^1(\R)$ global solutions belonging to a certain class of \emph{almost localized} functions. More precisely, we show that such solutions have to be a peakon. The main novelty in our analysis in comparison to the Camassa-Holm equation comes from the fact that in our present case the momentum is not a conserved quantity and may be unbounded along the trajectory. In this regard, to prove the Liouville property, we used a new Lyapunov functional not related to the (not conserved) momentum of the equation.
\end{abstract}

\maketitle 

\section{Introduction}

\subsection{The model} This paper is concerned with the Novikov equation \begin{align}\label{novikov_eq}
u_t-u_{txx}+4u^2u_x=3uu_xu_{xx}+u^2u_{xxx}, \qquad (t,x)\in\R^2,
\end{align}
where $u(t,x)$ is a real-valued function. This equation was derived by Novikov \cite{No} in a symmetry classification of nonlocal partial differential equations with cubic nonlinearity. By using the perturbative symmetry approach \cite{MiNo}, which yields necessary conditions for a PDE to admit infinitely many symmetries, Novikov  was able to isolate equation \eqref{novikov_eq} and derive its first few symmetries. Later, he was able to find an associated scalar Lax-pair, proving the integrability of the equation. On the other hand, Hone and Wang recently found a matrix Lax-pair representation of the Novikov equation, specifically, they showed that \eqref{novikov_eq} arises as a zero curvature equation $F_t-G_x+[F,G]=0$ which is the compatibility condition for the linear system \cite{HoWa} \begin{align*}
\Psi_x=F(y,\lambda)\Psi \quad \hbox{and} \quad \Psi_t=G(y,\lambda)\Psi,
\end{align*}
where $y=u-u_{xx}$ and the matrices $F$ and $G$ are defined by \begin{align*}
F=\left(\begin{matrix}
0 & \lambda y & 1 \\ 0 & 0 & \lambda y \\ 1 & 0 & 0
\end{matrix}\right), \quad G= \left(\begin{matrix}
\tfrac{1}{3\lambda^2}-uu_x & \tfrac{1}{\lambda}u_x-\lambda u^2y & u_x^2 \\ \tfrac{1}{\lambda}u & -\tfrac{2}{3\lambda^2} & -\tfrac{1}{\lambda}u_x-\lambda u^2y 
\\ -u^2 & \tfrac{1}{\lambda}u & \tfrac{1}{3\lambda^2}+uu_x
\end{matrix}\right).
\end{align*}
Moreover, by using this matrix Lax-pair representation, Hone and Wang showed how the Novikov equation is related by a reciprocal transformation to a negative flow in the Sawada-Kotera hierarchy. 

\medskip

The Novikov equation possesses infinitely many conservation laws, among which, the most important ones are given by
\begin{align}\label{cons_e}
E(u):=\int_\R\left(u^2(t,x)+u_x^2(t,x)\right)dx \quad \hbox{and}\quad F(u):=\int \Big(u^4+2u^2u_x^2-\dfrac{1}{3}u_x^4\Big)dx.
\end{align}
Solutions of \eqref{novikov_eq} are known to satisfy several symmetry properties: shifts in space and time, i.e. the mapping $u(t,x)\mapsto u(t+t_0,x+x_0)$ among solutions to \eqref{novikov_eq} is preserved, as well as space-time invertion, i.e. if $u(t,x)$ is a solution of \eqref{novikov_eq}, then $u(-t,-x)$ is another solution.

\medskip

One of the most important features of the Novikov equations is the existence of \emph{peakon} and \emph{antipeakon} solutions \cite{HoWa} which are peaked traveling waves with a discontinuous derivative at the crest. They are explicitly given by \[
\pm\varphi_{c}(x-ct)=\pm\sqrt{c}\varphi(x-ct):=\pm\sqrt{c}e^{-\vert x-ct\vert}, \qquad c>0.
\]
Moreover, the Novikov equation also exhibit multi-peakons solutions. More precisely, for any given natural number $n\in\N$, let us denote by $\vec{q}=(q_1,...,q_n)$ and $\vec{p}=(p_1,...,p_n)$ the position and momenta vectors. Then, the $n$-peaked traveling wave solution on the line is given by $
u(t,x)=\sum_{i=1}^n p_i(t)\exp(-\vert x-q_i(t)\vert)$, where $p_i$ and $q_i$ satisfy the following system of $2n$-differential equations
\begin{align}\label{multipeak}
\begin{cases}
\dfrac{dq_i}{dt}=u^2(q_i)=\displaystyle\sum_{j,k=1}^np_jp_ke^{-\vert q_i-q_j\vert-\vert q_i-q_k\vert},
\\ \displaystyle\dfrac{dp_i}{dt}=-p_iu(q_i)u_x(q_i)=p_i\sum_{j,k=1}^np_jp_k\sgn(q_i-q_j)e^{-\vert q_i-q_j\vert-\vert q_i-q_k\vert}.\end{cases}
\end{align}
There exists some similar expressions for periodic peakons and multipeakon solutions but we do not intend to deepen in this direction. On the other hand, equation \eqref{novikov_eq} can be rewritten in a compact form in terms of its \emph{momentum density} as
\begin{align}\label{nov_eq_y}
y_t+u^2y_x+3uu_xy=0, \quad \hbox{where} \quad y:=u-u_{xx},
\end{align}
which can be regarded as a cubic nonlinear generalization of the celebrated Camassa-Holm (CH) equation \cite{CH,FuFo}, \begin{align}\label{CH}
u_t-u_{txx}=uu_{xxx}+2u_xu_{xx}-3uu_x\quad \hbox{equivalently} \quad y_t+uy_x+2u_xy=0,
\end{align}
or the Degasperis-Procesi (DP) equation \cite{DP}, \begin{align}\label{DP}
u_t-u_{txx}=uu_{xxx}+3u_xu_{xx}-4uu_x \quad \hbox{equivalently} \quad y_t+uy_x+3u_xy=0.
\end{align}
It is worth noticing that the last three equations in terms of the momentum densities correspond to transport equations for $y(t)$. As a consequence, initial data with signed initial momentum density give rise to solutions with the same property. This is one of the key points to prove that smooth and decaying initial data with signed initial momentum density give rise to global solutions.

\medskip

Regarding the CH and the DP equations, both can be derived as a model for the propagation of unidirectional shallow water waves over a flat bottom by writing the Green-Naghdi equations in Lie-Poisson Hamiltonian form and then making an asymptotic expansion which keeps the Hamiltonian structure \cite{AlLa,CH,CoLa,Jo}. Moreover, both of them can be written in Hamiltonian form \[
\partial_tE'(u)=-\partial_xF'(u),
\]
where for the Camassa-Holm equation $E(u)$ and $F(u)$ are given by \[
E(u):=\int u^2+u_x^2 \quad \hbox{and} \quad F(u):=\int u^3+uu_x^2
\]
while for the Degasperis-Procesi equation they are given by \[
E(u):=\int yv=\int 5v^2+4v_x^2+v_{xx}^2 \quad \hbox{and}\quad F(u):=\int u^3,
\]
where $v:=(4-\partial_x^2)^{-1}u$. Moreover, both of them belongs to the so-called $b$-family introduced by Degasperis, Holm and Hones in \cite{DeHoHo}, \[
u_t-u_{txx}=bu_xu_{xx}+uu_{xxx}-(b+1)uu_x.
\]
In \cite{MiNo} it was shown that the $b$-family corresponds to an integrable equation only when $b=2,3$, which corresponds exactly to the CH and the DP equations respectively.

\medskip

On the other hand, the Novikov equation, as well as the CH and the DP equations, can also be written in a nonlocal form in the following way. From now on we shall denote by $p(x)$ the fundamental solution of $1-\partial_x^2$ in $\R$, that is $p:=\tfrac{1}{2}e^{-\vert x\vert}$. Then, we can rewrite \eqref{novikov_eq} as 
\begin{align}\label{nov_eq_2}
u_t+u^2u_x=-p*\left(3uu_xu_{xx}+2u_x^3+3u^2u_x\right),
\end{align}
which can be understood as a nonlocal perturbation of Burgers-type equations \[
u_t+\tfrac{1}{3}(u^3)_x=0,
\]
or more generally as a nonlinear nonlocal transport equation. This latter fact has many implications, for instance, from the blow-up criteria for transport equations we obtain that singularities are caused by the focusing of characteristics.

\medskip

At this point it is clear that the Novikov equation shares many of its remarkable analytic properties with both the CH and the DP equations, as the existence of a Lax-pair, the completely integrability and the bi-Hamiltonian structure \cite{DP,HoWa}, but also all of them exhibit both existence of peaked traveling waves as well as the phenomenon of wave breaking \cite{CH,ChGuLiQu,CoLa,DP,No}. This latter one means that the wave profile remains bounded while its slope becomes unbounded. As the authors explain in \cite{ChGuLiQu}, understanding the wave-breaking mechanism not only presents fundamental importance from a mathematical point of view but also a great physical interest since it would help to provide a key-mechanism for localizing energy in conservative systems by
forming one or several small-scale spots. Finally, we remark that, unlike the Novikov equation, peakon solutions for the CH and the DP equations have a slightly different form, which is given by \[
\widetilde{\varphi}_c(x-ct)=c\varphi(x-ct):=ce^{-\vert x-x_0-ct\vert}, \qquad c\in\R\setminus\{0\},\  x_0\in\R.
\]
It is worth noticing that in sharp contrast with the Novikov equation, CH and DP peakons can move in both directions, left and right, just by changing the sign of $c$, while all Novikov peakons and anti-peakons move to the right.

\medskip

About the stability of these peaked solitary waves, the first proof of orbital stability was given in the Camassa-Holm case for $H^1$-perturbations assuming that their associated momentum density defines a non-negative Radon measure \cite{CoMo2}. The orbital stability for perturbations in the whole energy space $H^1(\R)$ was proved by Constantin and Strauss in \cite{CoSt} (see also \cite{LiLi} for a proof in the Degasperis-Procesi case). Later, following the ideas in \cite{CoSt,LiLi} Liu et al. proved the orbital stability for peakons in the Novikov case under the additional assumption of non-negative initial momentum density \cite{LiLiQu}.

\medskip
 
From a physical point of view, these peakons, as well as the ones for the Novikov equation, reveal some similarities to the well-known Stokes waves of greatest height, i.e. traveling waves of maximum possible amplitude that are solutions to the governing equations for irrotational water waves \cite{Co,To}. These traveling waves (Stokes waves) are smooth everywhere except at the crest, where the lateral tangents differ.


%
%

\subsection{Initial data space}

Before stating our results we need to introduce some functional spaces and notation. Following the ideas of \cite{CoMo,EM1,EM2,Mo} we define  
\[
Y:=\big\{u\in H^1(\R): \ u-u_{xx}\in\mathcal{M}_b\big\},
\]
where $\mathcal{M}_b$ denotes the space of Radon measures  with finite total mass on $\R$. Moreover, from now on we shall denote by $Y_+$ the subspace defined by $Y_+:=\{u\in Y: \ u-u_{xx}\in\mathcal{M}_{b}^+\}$, where $\mathcal{M}_{b}^+$ denotes the space of non-negative finite Radon measures on $\R$. A crucial remark in what follows is that, for any function $v\in C_0^\infty(\R)$ we have
\begin{align}\label{positive_mom_1}
v(x)&=\dfrac{1}{2}\int_{-\infty}^x e^{x'-x}(v-v_{xx})(x')dx'+\dfrac{1}{2}\int_x^\infty e^{x-x'}(v-v_{xx})(x')dx'
\end{align}
and \begin{align}\label{positive_mom_2}
v_x(x)&=-\dfrac{1}{2}\int_{-\infty}^x e^{x'-x}(v-v_{xx})(x')dx'+\dfrac{1}{2}\int_x^\infty e^{x-x'}(v-v_{xx})(x')dx'
\end{align}
Therefore, if $v-v_{xx}\geq 0$ on $\R$ we conclude that $\vert v_x\vert\leq v$. Thus, by density of $C_0^\infty(\R)$ in $Y$, we deduce the same property for functions $v\in Y_+$.

\begin{rem}
We recall the following standard estimate which shall be useful in the sequel:
\[
\Vert u\Vert_{W^{1,1}}=\Vert p*(u- u_{xx})\Vert_{W^{1,1}}\lesssim \Vert u-u_{xx}\Vert_{\mathcal{M}},
\]
and hence it also holds that \[
\Vert u_{xx}\Vert_{\mathcal{M}}\leq \Vert u\Vert_{L^1}+\Vert u-u_{xx}\Vert_{\mathcal{M}}.
\]
Thus, we have $
Y(\R)\hookrightarrow \left\{u\in W^{1,1}(\R): \, u_x\in \mathrm{BV}(\R)\right\}$,
where $\mathrm{BV}(\R)$ denotes the space of functions with bounded variation. 
\end{rem}

With all of these definitions at hand we are able to introduce the most important definition throughout this paper. 

\begin{defn}[$H^1$-almost localized solution]\label{def_AL}
We say that a solution $u\in C(\R, H^1(\R))$ of equation \eqref{nov_eq_2} satifying $u-u_{xx}\in C_{ti}(\R,\mathcal{M}_{b}^+)$ is $H^1$-almost localized if there existe a $C^1$-function $x(\cdot)$ such that the following holds: For any $\varepsilon>0$, there exists $R_\varepsilon>0$ such that for all $t\in\R$ we have  \begin{align}\label{Y_al_def}
\int_{\vert x\vert>R_\varepsilon} \left(u^2+u_x^2\right)(t,\cdot+x(t))dx\leq \varepsilon.
\end{align}
\end{defn}

\begin{rem}\label{remark1}
In \cite{Mo}-\cite{Mo2}, instead of using Definition \ref{def_AL}, the author used what he called $Y$-almost localization, i.e. he replaced the functional in \eqref{Y_al_def} by \begin{align}\label{CH_functional_al}
\int \left(u^2(t)+u_x^2(t)\right)\Phi(\cdot-x(t))dx+\left\langle u(t)-u_{xx}(t),\Phi(\cdot-x(t))\right\rangle\leq \varepsilon,
\end{align}
and \begin{align}\label{DP_functional_al}
\left\langle u(t)-u_{xx}(t),\Phi(\cdot-x(t))\right\rangle\leq \varepsilon,
\end{align}
for the Camassa-Holm and the $b$-family respectively, where $\Phi$ corresponds to any continuous function $0\leq \Phi\leq 1$ satisfying $\supp\Phi\subset [-R_\varepsilon,R_\varepsilon]^c$. This change is related to the fact that the CH equation conserve both, the energy and the momentum, while the $b$-family conserve the momentum. Nevertheless, in the case of the CH, DP and Novikov equations, since we can prove that $H^1$-almost localized solutions are uniformly exponentially decaying, all of these characterizations are actually equivalent (see \cite{Mo2} for the equivalence between \eqref{CH_functional_al} and \eqref{DP_functional_al}). 
\end{rem}

\subsection{Main results}

The following theorem is the main result of this paper and give us the asymptotic stability of peakon solutions for the Novikov equation.
\begin{thm}\label{MT1}
Let $c>0$ be fixed. There exists an universal constant $1\gg\varepsilon^\star>0$ such that for any $\beta\in(0,c)$ and any initial data $u_0\in Y_+$ satisfying 
\begin{align}\label{smallness_hip}
\Vert u_0-\varphi_c\Vert_{H^1}\leq \varepsilon^\star\Big(\tfrac{\beta}{c}\Big)^8,
\end{align}
the following property holds: There exists $c^*>0$ with $\vert c-c^*\vert\ll c$ and a $C^1$ function $x:\R\to\R$ satisfying $\dot{x}(t)\to c^*$ as $t\to+\infty$
\[
u(t,\cdot+x(t))\rightharpoonup \varphi_{c^*} \ \hbox{ in }\ H^1(\R).
\]
where\footnote{By this we mean $u\in C(\R,H^1(\R))$ with $y\in C_{ti}(\R,\mathcal{M}_b^+(\R))$.} $u\in C_{ti}(\R,Y_+)$ is the global weak solution to equation \eqref{nov_eq_2} associated to $u_0$. Moreover, for any $z\in\R$ the following strong convergence holds
\begin{align}\label{strong_h1_conv_peakon_mthm}
\lim_{t\to+\infty}\Vert u(t)-\varphi_{c^*}(\cdot-x(t))\Vert_{H^1((-\infty,z)\cup(\beta t,+\infty))}=0.
\end{align}
\end{thm}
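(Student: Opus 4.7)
\medskip

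\noindent\textbf{Plan of proof.}

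The plan is to follow the now-standard Martel--Merle/Molinet scheme adapted to the Novikov setting, whose four building blocks are: (i) modulation, (ii) almost monotonicity of a well-chosen portion of the energy, (iii) extraction of an asymptotic object, (iv) rigidity (Liouville). First I would invoke the $H^1$-orbital stability result of Liu--Liu--Qu for Novikov peakons in $Y_+$: the smallness hypothesis \eqref{smallness_hip} guarantees that $u(t,\cdot)$ stays, for all $t\in\R$, at distance $\lesssim (\beta/c)^4$ in $H^1(\R)$ from the orbit $\{\varphi_c(\cdot-z):z\in\R\}$. A standard implicit function argument then produces a $C^1$ modulation $x:\R\to\R$ and a scale $c^*(t)$ close to $c$, uniquely determined by an orthogonality condition of the type $\int (u(t,\cdot+x(t))-\varphi_{c^*(t)})\,\partial_x\varphi=0$, together with the quantitative control $|\dot x(t)-c^*(t)^2|\ll c^2$ and $|c^*(t)-c|\ll c$.

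Next I would establish the key \emph{almost monotonicity} property. Because the Novikov momentum is \emph{not} conserved, the Molinet functional used for CH/DP is not available, and one has to introduce the new Lyapunov functional announced in the abstract: concretely, for a smooth nondecreasing cut-off $\Psi$ with $\Psi(-\infty)=0$, $\Psi(+\infty)=1$, set
\[
I_{x_0,K}(t):=\int_\R\bigl(u^2+u_x^2\bigr)(t,x)\,\Psi\bigl(x-x(t)+x_0-Kt\bigr)\,dx
\]
for $0<K<\beta$. Using the equation \eqref{nov_eq_2} (i.e. the nonlocal form after writing $u_t+u^2u_x=-p*(\cdots)$) and the control $|u_x|\le u$ coming from $y\ge 0$, I would compute $\frac{d}{dt}I_{x_0,K}$ and show that, up to terms controlled by the localized $H^1$-smallness of $u-\varphi_{c^*}$, the dominant boundary term has a definite sign. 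This yields, for $t_2\ge t_1$,
\[
I_{x_0,K}(t_2)\le I_{x_0,K}(t_1)+O(e^{-\delta x_0}),
\]
and by reflection a symmetric estimate on the left tail. This is the step I expect to be the main obstacle: the lack of conservation of $\int y$ and the quartic/cubic interaction terms $3uu_xu_{xx}+2u_x^3+3u^2u_x$ in the convolution force a careful pointwise splitting, and the bound $|u_x|\le u$ (valid on $Y_+$) is essential to close the monotonicity computation without any conserved momentum.

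With monotonicity in hand, I would pass to the asymptotic object. Pick any sequence $t_n\to+\infty$; by weak compactness in $H^1$ and in $\mathcal{M}_b^+$ for the momentum, $u(t_n,\cdot+x(t_n))\rightharpoonup \tilde u_0$ along a subsequence, with $\tilde u_0\in Y_+$ close to $\varphi_{c^*_\infty}$ for some $c^*_\infty>0$. Let $\tilde u$ be the global $Y_+$-solution of \eqref{nov_eq_2} starting from $\tilde u_0$; its continuity with respect to initial data in the weak topology (combined with orbital stability to prevent blow-up and to keep $\tilde u\in Y_+$) gives $u(t_n+t,\cdot+x(t_n))\rightharpoonup \tilde u(t,\cdot+\tilde x(t))$. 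The almost-monotonicity estimates, together with the uniform $H^1$-smallness around the modulation, transfer to $\tilde u$ the $H^1$-almost-localization property of Definition \ref{def_AL}. Applying the Liouville property proved earlier in the paper, $\tilde u$ must then be itself a peakon $\varphi_{c^*_\infty}(\cdot-c^*_\infty t-x_0)$; uniqueness of the modulation forces $x_0=0$, and a similar argument at fixed $t=0$ forces $c^*_\infty$ to be independent of the subsequence. This gives the full weak convergence $u(t,\cdot+x(t))\rightharpoonup \varphi_{c^*}$ in $H^1(\R)$ and $\dot x(t)\to (c^*)^2=c^*$ (the identity $c^*=(c^*)^2$ coming from the peakon speed formula $\dot q=u^2(q)$ and the definition of the modulation).

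Finally, to upgrade to the strong $H^1$ convergence \eqref{strong_h1_conv_peakon_mthm} on $(-\infty,z)\cup(\beta t,+\infty)$, I would use once more the almost monotonicity: on the right region $x>\beta t$ the cut-off $\Psi(\cdot-x(t)-Kt)$ with $K<\beta$ forces the tail $\int_{x>\beta t}(u^2+u_x^2)(t,x)\,dx$ to converge to the corresponding tail of the peakon, and on the left region $x<z$ the symmetric functional combined with the weak $H^1$-convergence and convergence of the local $L^2$-norms gives strong convergence. The reason one cannot include the interior region $(z,\beta t)$ is precisely that a possible residual soliton of smaller speed $<\beta$ would survive in $H^1$-norm there; the point of choosing $\beta>0$ is to exclude that region from the strong statement while still retaining the full weak convergence.
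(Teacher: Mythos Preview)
Your overall architecture is right, but the proposal glosses over precisely the step that the paper singles out as the main new difficulty for Novikov. In your extraction of the asymptotic object you write ``by weak compactness in $H^1$ and in $\mathcal{M}_b^+$ for the momentum, $u(t_n,\cdot+x(t_n))\rightharpoonup \tilde u_0$ along a subsequence, with $\tilde u_0\in Y_+$'', and then you invoke ``continuity with respect to initial data in the weak topology''. Neither ingredient is available here. Since the Novikov equation does \emph{not} conserve $\int y$, there is no a priori bound on $\sup_n\|y(t_n,\cdot+x(t_n))\|_{\mathcal{M}_b}$, so you only have local weak compactness in $\mathcal{M}_{loc}^+$; the limit $\tilde u_0$ is a priori only in $H^1$ with $(1-\partial_x^2)\tilde u_0\in\mathcal{M}_{loc}^+$, and the well-posedness theory (hence any weak continuity of the flow) does not apply. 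The paper's Proposition \ref{limit_prop_2} is designed exactly to fill this hole: one first passes to the limit in the equation by hand (Aubin--Lions, Helly, a.e.\ convergence), then uses the almost-monotonicity of $\int(u^2+u_x^2)\Psi$ on both sides to prove the limit $u_0^\star$ has \emph{uniform exponential decay}, and only then deduces $\|y_0^\star\|_{\mathcal{M}}\le\|u_0^\star\|_{L^1}<\infty$, placing the limit in the uniqueness class of Theorem \ref{theorem_gwp}. Without this detour your scheme does not close.

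Two smaller points. First, your orthogonality condition uses $\partial_x\varphi$ directly; since $\varphi'$ has a jump this is not regular enough for the implicit-function/modulation argument, and the paper works instead with the mollified test function $\rho_{n_0}*\varphi'$ (Lemma \ref{modulational_lemma}). Second, your identity ``$c^*=(c^*)^2$'' is a confusion: the Novikov peakon $\varphi_{c^*}=\sqrt{c^*}\varphi$ travels with speed $c^*$ and has peak value $\sqrt{c^*}$, so the characteristic relation $\dot q=u^2(q)$ gives $\dot x\to(\sqrt{c^*})^2=c^*$, not $(c^*)^2$. Finally, your argument for strong convergence on $(-\infty,z)$ by a ``symmetric functional'' is too vague; the paper instead proves a separate, self-contained lemma (Lemma \ref{traveling_energy_lem}) showing that for \emph{any} $Y_+$ initial data all the $H^1$-energy travels to the right, which is what actually yields the left-tail convergence.
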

The main ingredient in the proof of Theorem \ref{MT1} is a rigidity property of the Novikov equation. 

\begin{thm}\label{MT2}
Let us suppose that $u\in C(\R,H^1(\R))$ with $u-u_{xx}\in C_{ti}(\R,\mathcal{M}_b^+)$ is an $H^1$-almost localized solution of \eqref{novikov_eq} that is not identically zero. Then, there exists $c^*>0$ and $x_0\in\R$ such that \[
u(t)=\sqrt{c^*}\varphi(\cdot-x_0-c^*t), \qquad \forall t\in\R.
\]
\end{thm}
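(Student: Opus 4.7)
The proof follows a Liouville-type strategy in the spirit of Molinet \cite{Mo,Mo2,Mo3}, with the main novelty being that the Novikov momentum $\int y\,dx$ is not conserved, so the weighted-momentum functionals used in the CH and DP analyses are not directly available. The plan splits into three main steps: upgrade the almost-localization to uniform pointwise exponential decay, build a monotone Lyapunov functional adapted to the cubic nonlinearity, and identify the resulting limit profile as a peakon.

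\emph{Step 1 (Exponential decay).} Since $y\in\mathcal{M}_b^+$, the representation formulas \eqref{positive_mom_1}--\eqref{positive_mom_2} give $|u_x|\le u$ pointwise, and in particular $u\ge 0$. Combined with the $H^1$-almost localization \eqref{Y_al_def}, a standard iterative argument (as used for CH/DP in \cite{Mo2,Mo3}) should upgrade this to uniform pointwise exponential decay: there exist $C,\lambda>0$ independent of $t$ with
\[
u(t,x+x(t)) + |u_x(t,x+x(t))| \le C e^{-\lambda|x|}, \qquad (t,x)\in\R^2.
\]
This decay will justify the formal manipulations involving the cubic nonlinearity and, crucially, ensures that any sequence of time-translated profiles $\{u(t_n,\cdot+x(t_n))\}$ is relatively compact in $H^1(\R)$.

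\emph{Step 2 (Lyapunov functional).} Rather than weighting the (non-conserved) momentum, I would work with a weighted \emph{energy},
\[
\mathcal{I}(t) := \int_{\R} \Psi(x-x(t))\,(u^2+u_x^2)(t,x)\,dx,
\]
where $\Psi$ is a smooth non-decreasing cutoff with $\Psi(-\infty)=0$, $\Psi(+\infty)=1$. Conservation of $E(u)=\int(u^2+u_x^2)$ immediately gives the uniform bound $0\le \mathcal{I}(t)\le E(u_0)$, which fails if one weights $y$. Choosing the modulation so that $\dot{x}(t)$ matches the peakon speed relation $u^2(t,x(t))$, differentiating $\mathcal{I}$ using the nonlocal form \eqref{nov_eq_2} and performing several integrations by parts should yield an identity of the shape
\[
\dot{\mathcal{I}}(t) = \int \Psi'(x-x(t))\,\bigl[\,\mathcal{F}[u] - \dot{x}(t)(u^2+u_x^2)\,\bigr]\,dx + R(t),
\]
where $\mathcal{F}[u]$ denotes the local flux associated with the conservation of $E$ and $R(t)$ collects the nonlocal $p*(\cdot)$ contributions. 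Together with the pointwise bound $|u_x|\le u$ (hence $u_x^2\le u^2$) and $u\ge 0$, the leading integrand should have a definite sign; the uniform decay from Step~1 then forces $R(t)$ to be of lower order, so $\dot{\mathcal{I}}(t)$ has a constant sign in time. Producing the correct choice of $\Psi$, computing $\dot{\mathcal{I}}$ at the weak regularity level $u\in H^1$, and extracting signed monotonicity from the cubic nonlinearities is the main technical obstacle of the proof.

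\emph{Step 3 (Rigidity).} Monotonicity of $\mathcal{I}$ together with the uniform bound gives $\int_{\R}|\dot{\mathcal{I}}(t)|\,dt<+\infty$, hence there exists $t_n\to +\infty$ with $\dot{\mathcal{I}}(t_n)\to 0$. By Step~1, $\{u(t_n,\cdot+x(t_n))\}$ is compact in $H^1(\R)$; let $U$ be any strong limit. Passing to the limit in the signed identity from Step~2 forces the leading integrand to vanish pointwise in $\Psi'$, which, combined with $|U_x|\le U$, the structure of \eqref{novikov_eq}, and the non-negativity of the momentum, constrains $U$ to be the peakon profile $\sqrt{c^*}\,\varphi$ for some $c^*>0$. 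Applying the same argument along arbitrary time translates upgrades this to $u(t,\cdot+x(t))=\sqrt{c^*}\,\varphi$ for every $t\in\R$, and integrating the modulation equation $\dot{x}(t)=u^2(t,x(t))=c^*$ yields $x(t)=x_0+c^*t$, completing the proof.
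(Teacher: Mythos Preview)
Your Step~1 is on the right track and matches the paper's Proposition~\ref{MT3}, obtained from the almost-monotonicity Lemma~\ref{tech_lem_mon_exp}. The real gap is in Step~2: the weighted energy $\mathcal{I}(t)=\int\Psi(x-x(t))(u^2+u_x^2)$ is \emph{not} a Lyapunov functional. The paper computes its derivative explicitly (see \eqref{dt_I_J_i}) and the result contains the nonlocal terms $\int\{p*(3uu_x^2+2u^3)\}u\Psi'$ and $\int\{p_x*u_x^3\}u\Psi'$, which are not signed relative to $\dot{x}(t)\int(u^2+u_x^2)\Psi'$. What one actually gets is only almost-monotonicity up to an error $Ce^{-R/6}$ after splitting into near/far regions; this is exactly Lemma~\ref{tech_lem_mon_exp}, and it is used to prove exponential decay, not rigidity. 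Your remainder $R(t)$ is not lower order in any sense that would make $\dot{\mathcal{I}}$ signed, so the compactness argument in Step~3 never gets off the ground, and even if it did, ``the leading integrand vanishes'' does not single out the peakon without a separate characterization.

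The paper's route is quite different and hinges on two ideas you are missing. First (Proposition~\ref{prop_compact_support}), one shows using the characteristics \eqref{ODE_q_nov}--\eqref{eq_flow} and the exponential decay that the momentum density $y(t,\cdot+x(t))$ has support bounded above by some fixed $r$; this is where the finite speed of propagation of $y$ enters. Second, the Lyapunov functional is not a weighted integral at all but the \emph{pointwise value} $t\mapsto u(t,x(t)+x_+(t))$ at the rightmost characteristic $x(t)+x_+(t)$ of $\supp y(t)$. Differentiating along this characteristic gives the explicit formula \eqref{dt_charact_u}, whose sign is controlled using $|u_x|\le u$ and the positivity of the kernels $p_\pm$. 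Combining this monotonicity with the space-time reflection $u(-t,-x)$ forces $u(t,x(t)+x_+(t))$ to be constant; then the pointwise inequality of Lemma~\ref{tech_ineq_lem}, whose equality case is exactly the peakon, finishes the identification. None of these ingredients --- compact support of $y$, the characteristic-endpoint functional, the reflection trick, or the peakon-characterizing inequality --- appears in your outline.
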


The main ingredients in the proof of Theorem \ref{MT2} are the almost monotonicity of the energy, the finite speed of propagation of the momentum density, the existence of a Lyapunov functional and some continuity results with respect to the initial data for the $H^1$-topology.

\begin{rem}
This theorem implies, in particular, that an $H^1$-almost localized solution with non-negative momentun density cannot be smooth for any time. More precisely, if $u\in C(\R, H^1(\R))$ with $u-u_{xx}\in C_{ti}(\R,\mathcal{M}_b^+)$ is a $H^1$-almost localized solution of the Novikov equation that belongs to $H^{3/2}(\R)$ for some $t\in\R$, then $u$ must to be the trivial solution.
\end{rem}

Our method of proof is certainly strongly motivated by the remarkable work of Molinet in \cite{Mo} for the Camassa-Holm case (see also \cite{Mo2,Mo3}). However, as we shall see, due to the lack of conservation of momentum, the Novikov equation presents several new difficulties that we shall have to address. For instance, given a solution $u(t)$, since the global well-posedness requires the momentum density to have finite total mass on $\R$, apriori we are not allowed to study global limit solutions associated to $u(t)$. Nevertheless, by using an almost monotonicity result for the $H^1$-norm at the right of some curves, we shall be able to prove that for solutions staying close enough to peakon's trajectory, the associated limit objects are uniformly exponentially decaying and belong to $Y_+$ for all times. 
Another new difficulty is that the Lyapunov functional in \cite{Mo} was related to the conservation of the momentum. Here we introduce a new Lyapunov functional that is simpler and seems to work for a wider class   of CH-type equations. We point out that this new proof gives a simplification of Molinet's approach for the rigidity result, which can be useful for several types of CH-equations with peakon solutions.
%
%

\medskip

It is important to point out that all of these results, as well as the ones obtained in \cite{Mo,Mo2}, come from a series of remarkable previous works in the context of KdV-type equations. The interested reader can consult \cite{MaMe1,MaMe2,MaMeTs} for these previous results. 

\begin{rem}
From now on we shall focus on the peakon case $\varphi_c$. Nevertheless, notice that by using the invariance $u(t,x)\mapsto -u(t,x)$ we also deduce the asymptotic stability of the antipeakon profile $-\varphi_c$ where $c>0$, with perturbations in the class of $H^1$ functions with momentum density belonging to $\mathcal{M}_b^-(\R)$. 
\end{rem}

\subsection{Organization of this paper}
This paper is organized as follow. In Section \ref{preliminaries} we introduce some  definitions and state a series of results needed in our analysis, for instance, the well-posedness result in the class of solutions we shall work with. In section \ref{sec_MT2} we prove the rigidity result for the Novikov equation. Finally, in section \ref{sec_MT1} we prove the asymptotic stability of peakon solutions.

\section{Preliminaries}\label{preliminaries}

\subsection{Preliminaries and definitions}

In the sequel we shall need the following family of functions. Let $\{\rho_n\}_{n\in\N}$ be a mollifiers family definied by \begin{align}\label{def_rho}
\rho_n(x):=n\left(\int_\R\rho(\xi)d\xi\right)^{-1}\rho(nx), \quad \hbox{ where } \quad \rho(x):=\begin{cases}
e^{\frac{1}{x^2-1}} & \hbox{for } \vert x\vert<1
\\ 0 & \hbox{for } \vert x\vert\geq 1.
\end{cases}
\end{align}
Notice that for any $n\in\N$ we have $\Vert \rho_n\Vert_{L^1}=1$. On the other hand, for any $p\in [1,\infty]$ and any $T>0$ we shall denote by $\Vert f\Vert_{L_T^pH^1_x}$ the norm given by \[
\Vert f\Vert_{L_T^pH^1_x}^p:=\int_{-T}^T\left(\int_\R \big(f^2+f_x^2\big)(t,x)dx\right)^{p/2}dt.
\]
From now on we shall also denote by $C_b(\R)$ the set of bounded continuous functions on $\R$, and by  $C_c(\R)$ the set of compactly supported continuous functions on $\R$. Throughout this paper we shall also need the following definitions.
\begin{defn}[Weakly convergence of measures]\label{def_weakly_conv}
We say that a sequence $\{\nu_n\}\subseteq\mathcal{M}$ converge weakly towards $\nu\in\mathcal{M}$, which we shall denote by $\nu_n\rightharpoonup \nu$, if \[
 \langle \nu_n,\phi\rangle\to\langle \nu,\phi\rangle, \quad \hbox{for any }\, \phi\in C_c(\R).
\]
\end{defn}

\begin{rem}\label{weak_weakstar_conv}
Notice that we are adopting the standard Measure Theory's notation for the \emph{weak convergence} of a measure. Nevertheless, we recall that from a Functional Analysis point of view this convergence corresponds to the weak-* convergence on Banach spaces.
\end{rem}

\begin{defn}[Tightly and weak continuity of measure-valued functions] Let $I\subseteq \R$ be an interval.
\begin{enumerate}
\item We say that a function $f\in C_{ti}(I,\mathcal{M}_b)$ if for any $\phi\in C_b(\R)$ the map $t\mapsto\langle f(t)\phi\rangle$ is continuous on $I$.
\item We say that a function $f\in C_{w}(I,\mathcal{M})$ if for any $\phi\in C_c(\R)$ the map $t\mapsto\langle f(t)\phi\rangle$ is continuous in $I$.
\end{enumerate}
\end{defn}

\begin{defn}[Weak convergence in $C_{ti}(I)$] Let $I\subseteq \R$ be an interval. We say that a sequence $f_n\rightharpoonup f$ in $C_{ti}(I,\mathcal{M}_b)$ if for any $\phi \in C_b(\R)$ we have \[
\langle f_n(\cdot)\phi\rangle \to \langle f(\cdot)\phi\rangle \,\hbox{ in } C(I).
\]
\end{defn}
%

Let us finish this section by recalling a standard Measure Theory lemma 
(see for instance \cite{Ma}, Theorem $1.24$). 
%
\begin{lem}\label{lem_measures_2}
Let $\Omega$ be any locally compact metric space. Let us consider $\{\mu_n\}_{n\in\N}\subset\mathcal{M}_+(\Omega)$ a sequence of Radon measures weakly converging to some $\mu\in\mathcal{M}_+(\Omega)$ (see Definition \ref{def_weakly_conv}). Then, for every open set $V\subset\Omega$ we have \[
\mu(V)\leq\liminf_{n\to+\infty}\mu_n(V).
\]
\end{lem}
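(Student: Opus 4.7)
The plan is to reduce the claim about the open set $V$ to a family of claims about compact subsets $K\subset V$, and then use test functions from $C_c(\Omega)$ that interpolate between $\mathbf{1}_K$ and $\mathbf{1}_V$ to exploit the weak convergence hypothesis. More precisely, since $\mu$ is a non-negative Radon measure on a locally compact metric space, it is inner regular on open sets, so
\[
\mu(V)=\sup\big\{\mu(K)\,:\, K\subset V,\ K\text{ compact}\big\}.
\]
Hence it suffices to prove that $\mu(K)\leq\liminf_{n\to\infty}\mu_n(V)$ for every compact $K\subset V$.

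Next, I would fix such a compact $K\subset V$ and invoke Urysohn's lemma, which applies since $\Omega$, being locally compact and metric, is in particular locally compact Hausdorff. This produces a continuous function $\phi:\Omega\to[0,1]$ with compact support contained in $V$, satisfying $\phi\equiv 1$ on $K$. By construction $\phi\in C_c(\Omega)$ and pointwise $\mathbf{1}_K\leq\phi\leq\mathbf{1}_V$.

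Using the non-negativity of the measures together with the weak convergence $\mu_n\rightharpoonup\mu$ from Definition \ref{def_weakly_conv} applied to $\phi\in C_c(\Omega)$, I can then estimate
\[
\mu(K)=\int_\Omega \mathbf{1}_K\,d\mu\leq \int_\Omega \phi\,d\mu=\lim_{n\to\infty}\int_\Omega\phi\,d\mu_n\leq \liminf_{n\to\infty}\int_\Omega \mathbf{1}_V\,d\mu_n=\liminf_{n\to\infty}\mu_n(V),
\]
where the penultimate inequality uses $\phi\leq \mathbf{1}_V$ and $\mu_n\geq 0$. Taking the supremum over all compact $K\subset V$ and invoking inner regularity closes the argument.

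There is no real obstacle here: the statement is a classical lower-semicontinuity property of weak (vague) limits of Radon measures on open sets. The only small technical point worth checking carefully is the availability of Urysohn's lemma in this setting and the inner regularity of $\mu$ on open sets; both are automatic from the hypothesis that $\Omega$ is a locally compact metric space and that $\mu$ is a Radon measure in $\mathcal{M}_+(\Omega)$. The reverse inequality (for closed sets, with $\limsup$) would require an additional tightness/compact-support assumption, but it is not needed for the statement as given.
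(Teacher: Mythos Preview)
Your argument is correct and is the standard proof of this classical lower-semicontinuity property. The paper does not actually prove this lemma; it merely states it and cites Mattila's book (Theorem~1.24) as a reference, so there is no ``paper's own proof'' to compare against. Your use of inner regularity together with a Urysohn function sandwiched between $\mathbf{1}_K$ and $\mathbf{1}_V$ is exactly the textbook route and would serve as a complete justification.
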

This weak-lower semicontinuity property shall be useful in our proof and shall enable us to approximate the momentum density associated to solutions of equation  \eqref{nov_eq_2} by smooth solutions and pass to the limit.

\subsection{Well-posedness}

In the proofs of Theorems \ref{MT1} and \ref{MT2} we shall need to approximate non-smooth solutions of equation \eqref{nov_eq_2} by sequences of smooth solutions. In this regard, we shall need a global well-posedness result on a class of smooth solutions. In \cite{WuYi}, following the ideas of the seminal work of Constantin and Escher \cite{CoEs} on the Camassa-Holm equation, Wu and Yin proved the smooth global well-posedness for initial data with non-negative momentum density.

\begin{thm}[\cite{WuYi}]\label{GWP_smooth}
Let $u_0\in H^s$ for $s\geq 3$, with non-negative  momentum density $y_0$ belonging to $L^{1}(\R)$. Then, equation \eqref{novikov_eq} has a unique global strong solution \[
u\in C(\R,H^s(\R))\cap C^1(\R,H^{s-1}(\R)).
\]
Moreover, denoting by $y(t):=u(t)-u_{xx}(t)$ we have that $E(u)$ and $\Vert y(t)\Vert_{L^{2/3}}$ are two conservation laws. Additionally, we have that $y(t)$ and $u(t)$ are non-negative for all times $t\in\R$ and $\vert u_x(t,\cdot)\vert\leq u(t,\cdot)$ on $\R$.
\end{thm}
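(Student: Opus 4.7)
The plan is to combine a classical local well-posedness theory for quasilinear equations with a priori bounds coming from the sign of the momentum density $y=u-u_{xx}$, and to close the argument via a standard blow-up alternative for Camassa--Holm type equations. First, I would rewrite \eqref{novikov_eq} in the nonlocal form \eqref{nov_eq_2} and apply Kato's semigroup framework (or a Friedrichs regularization scheme combined with standard energy estimates at the $H^s$ level) to produce, for each $u_0\in H^s$ with $s\geq 3$, a unique maximal solution
\[
u\in C([0,T_{\max}),H^s(\R))\cap C^1([0,T_{\max}),H^{s-1}(\R)),
\]
together with the usual blow-up alternative: either $T_{\max}=+\infty$, or $\int_0^{T_{\max}}\Vert u_x(t)\Vert_{L^\infty}\,dt=+\infty$. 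The same statement holds backwards in time.

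Next I would propagate the sign of $y$ via the method of characteristics. Since $u\in H^3\hookrightarrow C^2$, the flow $\dot q(t,\xi)=u^2(t,q(t,\xi))$, $q(0,\xi)=\xi$ defines, for each $t\in(-T_{\max},T_{\max})$, a $C^1$-diffeomorphism of $\R$ whose Jacobian satisfies
\[
q_\xi(t,\xi)=\exp\!\Big(2\int_0^t (uu_x)(s,q(s,\xi))\,ds\Big)>0.
\]
The transport equation \eqref{nov_eq_y} then gives, along each characteristic,
\[
\frac{d}{dt}y(t,q(t,\xi))=-3(uu_x)(t,q(t,\xi))\,y(t,q(t,\xi)),
\]
so $y(t,q(t,\xi))=y_0(\xi)\exp\!\big(-3\int_0^t (uu_x)(s,q(s,\xi))\,ds\big)$, which forces $y(t)\ge 0$ on the whole interval of existence whenever $y_0\ge 0$. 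The representation formulas \eqref{positive_mom_1}--\eqref{positive_mom_2}, valid classically for smooth $u$, then yield at once the pointwise inequality $|u_x(t,x)|\le u(t,x)$, whence
\[
\Vert u_x(t)\Vert_{L^\infty}\le\Vert u(t)\Vert_{L^\infty}\le\tfrac{1}{\sqrt{2}}\Vert u(t)\Vert_{H^1}.
\]

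Two conservation laws remain, and these close the argument. Conservation of $E(u)=\Vert u\Vert_{H^1}^2$ follows from the standard energy estimate obtained by multiplying \eqref{novikov_eq} by $u$ and integrating; the cubic terms cancel after integration by parts. For the second one, combining the two characteristic identities above gives the pointwise relation $y(t,q(t,\xi))^{2/3}q_\xi(t,\xi)=y_0(\xi)^{2/3}$ (the exponent $2/3$ is precisely the one that cancels the factor $-3$ in the transport equation against the factor $2$ in the Jacobian), and a change of variables yields
\[
\int_\R y(t,x)^{2/3}\,dx=\int_\R y_0(\xi)^{2/3}\,d\xi,\qquad \forall\, t\in(-T_{\max},T_{\max}).
\]
Combined with the bound $\Vert u_x(t)\Vert_{L^\infty}\le\tfrac{1}{\sqrt{2}}\sqrt{E(u_0)}$ above, the blow-up alternative is ruled out and $T_{\max}=+\infty$. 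The main delicacy in carrying this program out rigorously is the justification of the formal computations at the minimal regularity $s=3$: while the transport argument for $y$ uses only $u\in C^2$, the energy identity and the $L^{2/3}$ relation involve manipulations with $u_{xx}$-terms that are only barely controlled, and a careful use of positivity of $y$ near its zero set; these points are best handled by performing the computations on a Friedrichs regularization $u^{(n)}$ of the solution and then passing to the limit using continuity of the local $H^s$-flow in the initial data, in the spirit of the Constantin--Escher treatment of the Camassa--Holm equation.
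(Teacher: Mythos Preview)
The paper does not prove this theorem: it is quoted verbatim from \cite{WuYi} and used as a black box, so there is no ``paper's own proof'' to compare against. Your outline is, in fact, an accurate reconstruction of the Wu--Yin argument (itself modeled on Constantin--Escher \cite{CoEs}): Kato-type local theory with the $\Vert u_x\Vert_{L^\infty}$ blow-up criterion, sign propagation for $y$ along the flow of $u^2$, the pointwise bound $|u_x|\le u$ via \eqref{positive_mom_1}--\eqref{positive_mom_2}, and conservation of $E(u)$ to close the loop. Your derivation of the $L^{2/3}$ invariant from $y(t,q)\,q_\xi^{3/2}=y_0$ (equivalently $y(t,q)^{2/3}q_\xi=y_0^{2/3}$) is exactly the identity recorded as \eqref{eq_flow} in the present paper; the exponent bookkeeping is correct.

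One small caveat: the $L^{2/3}$ conservation is a nice structural fact but is not actually needed for globality --- conservation of $E(u)$ together with $|u_x|\le u$ already gives $\Vert u_x(t)\Vert_{L^\infty}\le \tfrac{1}{\sqrt{2}}\sqrt{E(u_0)}$, which by itself rules out blow-up. So the argument is even slightly more economical than you suggest.
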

Unfortunately, since peakon profiles
do not belong\footnote{Actually, they do not belong to any $W^{1+\frac{1}{p},p}(\R)$ for any $p\in [1,+\infty)$. However, peakon profiles do belong to $W^{1,\infty}(\R)$, where $W^{1,\infty}(\R)$ denotes the space of Lipschitz functions.} to $H^{3/2}(\R)$, they do not enter into this framework either, and hence this theorem is not useful for our purposes. Nevertheless, by following the work of Constantin and Molinet \cite{CoMo}, in the same work Wu and Yin also proved a global well-posedness theorem for a class of functions containing peakons. This result shall be crucial in our analysis. However, we shall need a slightly improved version of this theorem, which we state below. 
\begin{thm}[\cite{WuYi}]\label{theorem_gwp}
Let $u_0\in H^1(\R)$ be a function satisfying $y_0:=(u_0-u_{0,xx})\in\mathcal{M}_b^+(\R)$.
Then, the following properties hold: \begin{enumerate}
\item[1.] \emph{\textbf{Uniqueness and global existence:}} There exists a global weak solution \[
u\in C(\R,H^1(\R))\cap C^1(\R,L^2(\R)),
\]
associated to the initial data $u(0)=u_0$ such that its momentum density \[
y(t,\cdot):=u(t,\cdot)-u_{xx}(t,\cdot)\in C_{ti}(\R,\mathcal{M}_b^+(\R)).
\]
Additionally $I(u)$ and $E(u)$ are conservation laws. Moreover, the solution is unique in the class \[
\{f\in C(\R,H^1(\R))\}\cap\{f-f_{xx}\in L^\infty(\R,\mathcal{M}_b^+)\}.
\]
\item[2.] \emph{\textbf{Continuity with respect to the initial data $H^1(\R)$:}} For any sequence $\{u_{0,n}\}_{n\in\N}$ bounded in $Y_+(\R)$  such that $ u_{0,n}\to u_0 \,\hbox{ in } H^1(\R)$,
the following holds: For any $T>0$, the family of solutions $\{u_{n}\}$ to equation \eqref{nov_eq_2} associated to $\{u_{0,n}\}$ satisfies \begin{align}\label{convergence_h1_ti}
u_n\to u \,\hbox{ in }\, C([-T,T],H^1(\R)) \quad \hbox{and} \quad y_{n}\rightharpoonup y \,\hbox{ in }\, C_{ti}([-T,T],\mathcal{M}).
\end{align}
\end{enumerate}
\end{thm}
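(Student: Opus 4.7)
The plan is to combine the smooth global well-posedness of Theorem \ref{GWP_smooth} with an approximation procedure in the spirit of Constantin-Molinet \cite{CoMo} and Wu-Yin \cite{WuYi}. Given $u_0\in Y_+$, I regularize by $u_{0,n}:=\rho_n\ast u_0=p\ast(\rho_n\ast y_0)$, with $\rho_n$ as in \eqref{def_rho}; this produces smooth initial data in every $H^s$ ($s\geq 3$) with non-negative momentum density $y_{0,n}=\rho_n\ast y_0\in L^1\cap C^\infty(\R)$, and $u_{0,n}\to u_0$ strongly in $H^1(\R)$. Theorem \ref{GWP_smooth} then provides smooth global solutions $u_n\in C(\R,H^s)\cap C^1(\R,H^{s-1})$ to serve as an approximating sequence; the proof reduces to extracting a limit and identifying it as the desired solution.

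For part (1), I would first collect uniform bounds on $(u_n)$. Energy conservation gives $\|u_n(t)\|_{H^1}=\|u_{0,n}\|_{H^1}$ bounded, and the pointwise bound $|u_{n,x}|\leq u_n$ (from $y_n\geq 0$ via \eqref{positive_mom_1}--\eqref{positive_mom_2}) allows one to integrate the transport equation $y_t+u^2y_x+3uu_xy=0$ to obtain $\tfrac{d}{dt}\int y_n=-\int u_nu_{n,x}y_n$, whence Gronwall yields $\|y_n(t)\|_{L^1}\leq e^{C|t|}\|y_{0,n}\|_{L^1}$ on any $[-T,T]$. To bound $\partial_tu_n$ in $L^\infty_tL^2_x$, I rewrite the nonlocal nonlinearity using the identity $uu_xu_{xx}=\tfrac12(uu_x^2)_x-\tfrac12 u_x^3$, reducing \eqref{nov_eq_2} to
\begin{align*}
u_t+u^2u_x=-\tfrac{3}{2}\,p_x\ast(uu_x^2)-\tfrac{1}{2}\,p\ast u_x^3-3\,p\ast(u^2u_x),
\end{align*}
whose right-hand side is controlled in $L^2$ by $\|u\|_{H^1}^3$ using $|u_x|\leq u$ and $\|p\|_{L^1},\|p_x\|_{L^1}\leq 1$. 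By Aubin-Lions, a subsequence of $u_n$ converges in $C([-T,T],H^s_{\mathrm{loc}})$ for every $s<1$ and weakly-$\ast$ in $L^\infty_tH^1_x$, while $y_n$ admits a weak-$\ast$ limit $y\in L^\infty_t\mathcal{M}_b^+$ which must coincide with $u-u_{xx}$. This convergence is strong enough to pass to the limit in the above form, producing a global weak solution in the required class. Uniqueness in the class $C(\R,H^1)\cap\{f-f_{xx}\in L^\infty(\R,\mathcal{M}_b^+)\}$ follows from a standard $L^2$-Lipschitz estimate on the flow based on the transport structure of $y$, as done by Wu-Yin.

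For part (2), applying the same compactness argument to the sequence of solutions $u_n$ associated to $u_{0,n}\to u_0$ in $H^1$ (with $(u_{0,n})$ bounded in $Y_+$) produces convergence in $C([-T,T],H^s_{\mathrm{loc}})$ ($s<1$), and uniqueness from part (1) promotes this to convergence of the whole sequence. The key upgrade to strong $H^1$ convergence relies on energy conservation: $E(u_n(t))=E(u_{0,n})\to E(u_0)=E(u(t))$ uniformly in $t\in[-T,T]$; combined with the weak $H^1$ convergence, pointwise in $t$, this Hilbert-space identity yields pointwise strong $H^1$ convergence. Equicontinuity in $t$, needed for uniform-in-$t$ strong convergence, is provided by the uniform $L^\infty_tL^2_x$ bound on $\partial_tu_n$ together with the uniform $H^1$ bound via a standard Lions-type argument. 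The tight convergence $y_n\rightharpoonup y$ in $C_{ti}([-T,T],\mathcal{M})$ is then deduced from the identity $y_n=u_n-u_{n,xx}$ by testing against $\phi\in C_b(\R)$ and integrating by parts, exploiting the strong $H^1$ convergence of $u_n$.

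The main obstacle will be the rigorous passage to the limit in the nonlocal nonlinearity of \eqref{nov_eq_2}, whose raw form involves the product $uu_xu_{xx}$ with $u_{xx}$ only a measure. The integration-by-parts trick outlined above is essential: it rewrites the right-hand side as convolutions of $p$ or $p_x$ (bounded $L^1$ kernels) with continuous cubic functionals of $(u,u_x)$ alone, which are then continuous for the strong $H^1$ topology. A secondary technical delicacy is to prevent mass loss at infinity for the weak-$\ast$ limit of $y_n$, which would break the tight convergence; this is controlled by the uniform bound on $\|y_n(t)\|_{\mathcal{M}}$ combined with a uniform exponential tail estimate on $y_n$ derived from the transport equation via characteristics along the $C^1$ velocity $u_n^2$.
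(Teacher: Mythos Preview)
Your outline is essentially the approach the paper invokes (it simply defers to Molinet \cite{Mo,Mo2} and Wu--Yin \cite{WuYi}): mollify the data, apply the smooth global theory of Theorem~\ref{GWP_smooth}, collect uniform $H^1$, $L^1$-of-$y$, and $\partial_t u$ bounds, extract a limit by Aubin--Lions/Helly, and for part (2) upgrade weak to strong $C([-T,T],H^1)$ convergence via conservation of $E$. Your rewriting of the nonlocal term so that only $(u,u_x)$ appear is exactly the device needed to pass to the limit.

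There is one genuine gap. For part (2) you deduce the tight convergence $y_n\rightharpoonup y$ in $C_{ti}([-T,T],\mathcal{M})$ ``by testing against $\phi\in C_b(\R)$ and integrating by parts, exploiting the strong $H^1$ convergence of $u_n$''. For a merely continuous bounded $\phi$ you cannot integrate by parts, and your tail control via characteristics only propagates tightness already present in the initial momenta. Under the bare hypotheses of part (2) the initial momenta need \emph{not} be tight: take $y_{0,n}=y_0+\mu_n$ with $\mu_n$ the uniform probability measure on $[n,2n]$; then $u_{0,n}=u_0+p*\mu_n\to u_0$ in $H^1$, the sequence is bounded in $Y_+$, yet $\langle y_{0,n},1\rangle=\Vert y_0\Vert_{\mathcal M}+1\not\to\Vert y_0\Vert_{\mathcal M}$. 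What your argument (and Molinet's) actually delivers is uniform-in-$t$ convergence of $\langle y_n(t),\phi\rangle$ for $\phi\in C_c(\R)$ (approximate $\phi$ by $C_c^\infty$ functions, integrate by parts, and use the uniform bound on $\Vert y_n(t)\Vert_{\mathcal M}$); this weak---rather than tight---convergence is all that is used downstream in the paper (e.g.\ in the proofs of Proposition~\ref{prop_compact_support} and Lemma~\ref{integral_line_nov}).
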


\begin{proof}
We refer to \cite{Mo,Mo2}, Propositions $2.2$, for a proof of this theorem in both the Camassa-Holm and the $b$-family case. Notice that the same proof applies to the Novikov equation, provided Theorem \ref{GWP_smooth} and the fact that the first point of the statement was proven in \cite{WuYi}, except for the fact that $y\in C_{ti}(\R,\mathcal{M}_b^+)$, which can be proven in exactly the same fashion as in \cite{Mo}.
\end{proof}

\section{Liouville property for the Novikov equation}\label{sec_MT2}

\subsection{Preliminary properties of almost localized solutions and almost monotonicity lemma}
This subsection aims to state some preliminary properties regarding the decay of almost localized solutions that shall be useful in the sequel. Since the proof of these properties plays no role in the study of Theorems \ref{MT1} and \ref{MT2}, we postpone them to the appendix.
\begin{prop}[Time-uniform exponential decay]\label{MT3}
Let $u\in C(\R,H^1(\R))$ be an $H^1$-almost localized solution to \eqref{nov_eq_2}. Then, there exists a constant $C>0$ only depending on $\Vert u_0\Vert_{H^1}$ and the mapping $\varepsilon\mapsto R_\varepsilon$ (see Definition \ref{def_AL}), and $K\geq1$ such that for all $t\in\R$, all $R>0$ and all $\vert x\vert>R$ we have\begin{align}\label{exp_decay_u}
\vert u\big(t,x+x(t)\big)\vert+\int_{\vert x\vert>R}\left(u^2+u_x^2\right)(t,\cdot+x(t))dx\leq Ce^{-\frac{R}{K}}.
\end{align}
\end{prop}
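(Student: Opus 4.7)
The plan is to adapt Molinet's strategy (\cite{Mo,Mo2}), based on two ingredients: an almost monotonicity lemma for the localized $H^1$-energy on moving half-lines, and a bootstrap argument that iterates this monotonicity against the almost localization hypothesis \eqref{Y_al_def}.

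First, I would establish an almost monotonicity lemma. Fix a smooth non-decreasing cut-off $\Psi\in C^\infty(\R;[0,1])$ with $\Psi\equiv 0$ on $(-\infty,-1]$ and $\Psi\equiv 1$ on $[1,\infty)$, and set $\Psi_K:=\Psi(\cdot/K)$. For $x_0,\sigma,t_0 \in \R$ define
\[
\mathcal{I}_{x_0,\sigma}(t) := \int_\R (u^2+u_x^2)(t,x)\,\Psi_K\bigl(x - x(t) - x_0 - \sigma(t-t_0)\bigr)\,dx.
\]
Starting from \eqref{nov_eq_2} and using the identity $u_{xx} = u - y$ to eliminate $u_{xx}$ from $G = 3uu_xu_{xx}+2u_x^3+3u^2u_x$, one derives an energy identity of the schematic form
\[
\partial_t(u^2+u_x^2) + \partial_x\bigl[u^2(u^2+u_x^2)\bigr] = \mathcal{P}(u,u_x) + \mathcal{N}(u,u_x),
\]
where $\mathcal{P}$ collects the local cubic terms and $\mathcal{N}$ the nonlocal contributions of the form $u\,(p*\widetilde G)$ and $u_x\,\partial_x(p*\widetilde G)$ for cubic $\widetilde G$. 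Multiplying by $\Psi_K(\cdot - x(t) - x_0 - \sigma(t-t_0))$ and integrating by parts, all local contributions collect into $\int(u^2 - \dot{x}(t) - \sigma)(u^2+u_x^2)\,\Psi_K' + (\text{controllable cubic errors})$, while the nonlocal part is bounded by $Ce^{-x_0/K}$ thanks to the exponential decay of $p = \tfrac12 e^{-|\cdot|}$ and the fact that the support of $\Psi_K'$ is separated from the bulk of the solution by a distance of order $x_0$, provided $K$ is small enough. Using $\|u(t)\|_{L^\infty}^2 \leq \tfrac12\|u_0\|_{H^1}^2$ together with the Lipschitz bound $|\dot{x}(t)| \leq \tfrac12\|u_0\|_{H^1}^2$ (uniform by conservation of $E$), one can choose $\sigma$ and the orientation of $\Psi_K$ so that the bulk factor has a definite sign, obtaining
\[
\mathcal{I}_{x_0,\sigma}(t_2) \leq \mathcal{I}_{x_0,\sigma}(t_1) + C e^{-x_0/K}, \qquad t_0 \leq t_1 \leq t_2,
\]
together with its analogue in the opposite orientation.

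Second, combining this lemma with \eqref{Y_al_def} gives the exponential decay. Setting $f(R) := \sup_t \int_{x>R}(u^2+u_x^2)(t,\cdot+x(t))\,dx$, the almost localization provides the base case $f(R_\varepsilon) \leq \varepsilon$ for any prescribed $\varepsilon > 0$. Applying the almost monotonicity between the times $t_0$ and $t_0+\tau$ for a well-chosen shift $\tau > 0$ yields a contraction-type inequality $f(R+\ell) \leq \theta f(R) + Ce^{-R/K}$ for some $\theta\in(0,1)$ and $\ell = \sigma\tau > 0$; iterating this dyadically from the base case produces $f(R) \leq Ce^{-R/K'}$ for some $K'\geq 1$. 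A mirror argument controls the left half-line. The pointwise bound $|u(t,x+x(t))|\leq Ce^{-R/K}$ then follows at once from the Sobolev embedding $H^1([R-1,\infty))\hookrightarrow L^\infty([R,\infty))$ applied to the $H^1$-decay just obtained, up to adjusting $K$.

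The main technical obstacle is the rigorous justification of the energy identity and of the subsequent integrations by parts when $y$ is only a Radon measure and $u_{xx}$ only a distribution, so that the computations are purely formal for general $u\in Y_+$. The plan is to first derive the monotonicity bound on the smooth approximating sequence provided by Theorem \ref{theorem_gwp}(2), where every manipulation is justified, and then to pass to the limit using the strong $H^1$-convergence $u_n \to u$ in $C([-T,T],H^1)$ and the tight weak convergence $y_n \rightharpoonup y$ in $C_{ti}([-T,T],\mathcal{M})$, with Lemma \ref{lem_measures_2} providing the weak-lower-semicontinuity needed when taking limits of non-negative measures.
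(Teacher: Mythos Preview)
Your strategy has the right two ingredients (almost monotonicity of a localized $H^1$-energy, combined with the almost-localization hypothesis), and the regularization remark at the end is correct and matches what the paper does. However, there is a genuine gap in how you chain the two ingredients together, and a hypothesis you are missing that is essential for the argument to close.

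The missing hypothesis is a uniform lower bound $\inf_{t\in\R}\dot x(t)\ge c>0$. In your monotonicity lemma you only use $|\dot x(t)|\le \tfrac12\|u_0\|_{H^1}^2$ and pick $\sigma$ accordingly. But look at what you need simultaneously: for the bulk factor $u^2-\dot x-\sigma$ to be negative on $\operatorname{supp}\Psi_K'$ (where $u$ is small) you need $\dot x+\sigma>0$; and for the barrier, located at $x_0+\sigma(t-t_0)$ in the moving frame, to slide off to $+\infty$ as you send time to $-\infty$ you need $\sigma<0$. These two constraints together force $\dot x(t)>-\sigma>0$ uniformly in $t$, which a mere two-sided bound on $|\dot x|$ does not give. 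The paper's Lemma~\ref{tech_lem_mon_exp} builds this in explicitly: the weight is centered at $z_{t_0}^R(t)$ with $(1-\delta)\dot x\le\dot z\le(1-\gamma)\dot x$ and $\dot x\ge c$, so that $\dot z>0$ (monotonicity) and $\dot x-\dot z\ge\gamma c>0$ (barrier drifts to $+\infty$ in the moving frame as $t\to-\infty$).

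The second issue is your ``contraction-type inequality'' $f(R+\ell)\le\theta f(R)+Ce^{-R/K}$ with $\theta<1$: no such contraction factor is produced by the almost monotonicity. What the monotonicity actually yields (once the lower bound on $\dot x$ is in place) is the reverse-looking inequality $\mathrm I_{t_0}^R(t_0)\le \mathrm I_{t_0}^R(t)+Ce^{-R/6}$ for $t\le t_0$. The paper then exploits this in one step, not by iteration: since the barrier drifts to $+\infty$ in the moving frame as $t\to-\infty$, the almost-localization hypothesis gives $\mathrm I_{t_0}^R(t)\to 0$, hence $\mathrm I_{t_0}^R(t_0)\le Ce^{-R/6}$ directly. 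Together with \eqref{energy_right} and the space-time inversion $u(t,x)\mapsto u(-t,-x)$ this handles both half-lines, and Sobolev embedding gives the pointwise bound. If you want to phrase it iteratively, the correct inequality is $f(R)\le f(R+\ell)+Ce^{-R/K}$; summing a geometric series and using $f(+\infty)=0$ from almost localization gives the result, with no $\theta$ needed.
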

The previous proposition, whose proof is found in Section \ref{proof_MT_appendix}, is actually a classical consequence of an almost-monotonicity property of the energy (see Lemma \ref{tech_lem_mon_exp}) which, together with the $H^1$-almost localized hypothesis, implies the uniform exponential decay of the solution. To prove this theorem and both Theorem \ref{MT1} and \ref{MT2} let us introduce some useful notation. From now on we shall denote by $\Psi$ the weight function defined by 
\begin{align}\label{psi_def}
\Psi:=\dfrac{2}{\pi}\arctan\left(\exp\big(\tfrac{x}{6}\big)\right),
\end{align}
The idea of introducing this weight function is to measure $u(t,x)$ at the right side of space. Notice that as a direct consequence of the definition we have that $\Psi(x)\to1$ as $x\to+\infty$ and
\begin{align}\label{psi_bound}
0\leq \Psi\leq 1, \qquad \vert\Psi'''\vert\leq\dfrac{1}{10}\Psi' \quad \hbox{and}\quad \forall x\leq 0,\ \, \vert\Psi(x)\vert+\vert \Psi'(x)\vert \lesssim e^{\frac{x}{6}}.
\end{align}
Finally, for any modulation variable $z:\R\to\R$ and any point $x_0\in\R$ we define the modified energy functional 
\begin{align*}
\widehat{\mathrm{I}}_{t_0}(t):=\int _\R\big(u^2(t)+u_x^2(t)\big)\Psi\big(\cdot-x_0-z(t)+z(t_0)\big)dx
\end{align*}
A key point in our analysis is the fact that $\widehat{\mathrm{I}}_{t_0}(t)$ approximates the energy of $u(t)$ at the right of $x(t)=x_0+z(t)-z(t_0)$. Moreover, by using the definition of $\Psi$ in \eqref{psi_def} we deduce that \begin{align}\label{energy_right}
\hbox{for all }\,t_0\in\R, \ \ \widehat{\mathrm{I}}_{t_0}(t_0)>\dfrac{1}{2}\Vert u(t_0,\cdot)\Vert_{H^1(x_0,+\infty)}.
\end{align}
The next technical lemma states the almost monotonicity result of the energy at the right. This lemma shall be crucial in the proofs of Theorems \ref{MT1}-\ref{MT2}, and we shall use it repeatedly.
\begin{lem}[Almost-monotonicity of the energy at the right]\label{tech_lem_mon_exp}
Let $c>0$ and $\delta\in(0,1)$ be two fixed parameters. Assume that $u\in C(\R,H^1(\R))$ with $y\in C_{ti}(\R,\mathcal{M}_b^+)$ is a solution to equation \eqref{novikov_eq} such that there exists $R_0>0$ and a $C^1$ function $x:\R\to\R$ with $\inf_\R\dot{x}(t)\geq c$ satisfying \begin{align}\label{L_infty_bound_outside}
\hbox{for all } \, t\in \R, \ \ \Vert u(t)\Vert_{L^\infty(\vert x-x(t)\vert>R_0)}\leq \dfrac{(1-\delta)c}{\mathbf{b}}, \, \hbox{ where } \, \mathbf{b}:=2^6\max\{1,\Vert u_0\Vert_{H^1}\}.
\end{align}
Then, for $R>R_0$ sufficiently large, $\gamma\in (0,\delta)$ and any $C^1$ function $z:\R\to\R$ satisfying \begin{align}\label{hip_z_dot}
(1-\delta)\dot x(t)\leq\dot z(t)\leq (1-\gamma)\dot x(t), \quad \hbox{for all }\,t\in\R,
\end{align}
the following property holds: Let $t_0\in\R$ be a fixed time. Define the energy functionals \[
\mathrm{I}_{t_0}^{\pm R}(t):=\int_\R\big(u^2(t)+u_x^2(t)\big)\Psi\big(\cdot-z_{t_0}^{\pm R})\big)dx \ \hbox{ where }\ z_{t_0}^{\pm R}(t):=x(t_0)\pm R+z(t)-z(t_0).
\]
Then we have \begin{align}\label{ineq_Itzero_It}
\forall t\leq t_0, \ \, \mathrm{I}_{t_0}^R(t_0)-\mathrm{I}_{t_0}^R(t)\leq Ce^{-R/6} \  \hbox{ and } \ \ \forall t\geq t_0, \ \, \mathrm{I}_{t_0}^{-R}(t)-\mathrm{I}_{t_0}^{-R}(t_0)\leq Ce^{-R/6},
\end{align}
for some constant $C>0$ only depending on $\delta$, $\gamma$, $c$, $R_0$ and $E(u)$.
\end{lem}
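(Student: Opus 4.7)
The plan is to derive a local conservation law for the density $u^2+u_x^2$, use it to compute $\frac{d}{dt}\mathrm{I}_{t_0}^{\pm R}$, and obtain the almost monotonicity by splitting the spatial integral into an inner region $\Omega_t^{\rm in}:=\{|x-x(t)|\leq R_0\}$, where the weight $\Psi'$ is exponentially small in $R$, and an outer region $\Omega_t^{\rm out}$, where $u$ itself is small by \eqref{L_infty_bound_outside}.

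Multiplying \eqref{novikov_eq} by $2u$ and reorganizing via $2uu_t=\partial_t u^2$ and $-2uu_{txx}=\partial_t u_x^2-\partial_x(2uu_{tx})$, then substituting $u_{tx}=-2uu_x^2-u^2u_{xx}-\partial_x p * N[u]$ obtained from \eqref{nov_eq_2} with $N[u]:=3uu_xu_{xx}+2u_x^3+3u^2u_x$, I would obtain the pointwise identity $\partial_t(u^2+u_x^2)+\partial_x G[u]=0$ with $G[u]=4u^2u_x^2+2u^4+2u\,\partial_x p * N[u]$. Using the algebraic identity $N[u]=\partial_x(\tfrac32 uu_x^2+u^3)+\tfrac12 u_x^3$ combined with $\partial_x^2 p=p-\delta_0$ further reduces $G$ to
\begin{equation*}
G[u]=u^2u_x^2+2u\,p*\bigl(\tfrac32 uu_x^2+u^3\bigr)+u\,\partial_x p * u_x^3,
\end{equation*}
so that the chain rule yields
\begin{equation*}
\frac{d}{dt}\mathrm{I}_{t_0}^R(t)=\int\bigl[G[u]-\dot z(t)(u^2+u_x^2)\bigr]\Psi'(\cdot-z_{t_0}^R(t))\,dx.
\end{equation*}

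Hypothesis \eqref{hip_z_dot} implies that $s\mapsto x(s)-z(s)$ is non-decreasing with slope $\geq\gamma c$, so for $t\leq t_0$ and $x\in\Omega_t^{\rm in}$ we have $x-z_{t_0}^R(t)\leq R_0-R$; by \eqref{psi_bound}, $\Psi'\lesssim e^{(R_0-R)/6}$ on $\Omega_t^{\rm in}$, and combined with the conservation of $E(u)$ and the uniform bounds $\|p*(\tfrac32 uu_x^2+u^3)\|_{L^\infty}+\|\partial_x p * u_x^3\|_{L^\infty}\lesssim E(u)^{3/2}$, the $\Omega_t^{\rm in}$-contribution to $\frac{d}{dt}\mathrm{I}_{t_0}^R$ is bounded below by $-C(E,R_0)e^{-R/6}$. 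On $\Omega_t^{\rm out}$, the bound $|u|\leq(1-\delta)c/\mathbf b$ together with the pointwise inequality $|u_x|\leq u$ (valid since $y\in\mathcal{M}_b^+$) yields $u^2u_x^2\leq((1-\delta)c/\mathbf b)^2(u^2+u_x^2)$, which is strictly dominated by $\tfrac12\dot z(u^2+u_x^2)$ thanks to $\dot z\geq(1-\delta)c$ and the calibration $\mathbf b=2^6\max\{1,\|u_0\|_{H^1}\}$. The term $2u\,p*(\tfrac32 uu_x^2+u^3)$ is non-negative by the positivity of $u$ and of $\tfrac32 uu_x^2+u^3$, hence helpful; the remaining $u\,\partial_x p * u_x^3$ is estimated by splitting the convolution variable $y$ into $\Omega_t^{\rm in}$ and $\Omega_t^{\rm out}$: for $y\in\Omega_t^{\rm in}$ and $x\in\supp\Psi'$ the kernel satisfies $|\partial_x p(x-y)|\leq\tfrac12 e^{-(R-R_0)}$, giving an exponentially small contribution, while for $y\in\Omega_t^{\rm out}$ the factor $|u_x|^3\leq((1-\delta)c/\mathbf b)^3$ together with $\|\partial_x p\|_{L^1}<\infty$ produces a residual absorbed into the coercive transport via the calibration of $\mathbf b$.

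Combining all estimates yields $\frac{d}{dt}\mathrm{I}_{t_0}^R(t)\geq -Ce^{-R/6}$ for all $t\leq t_0$, with $C=C(\delta,\gamma,c,R_0,E(u))$; integrating over $[t,t_0]$ then proves the first inequality in \eqref{ineq_Itzero_It}. The second inequality follows by the identical argument run forward in time with the weight $\Psi(\cdot-z_{t_0}^{-R})$. The main obstacle is the nonlocal flux $2u\,\partial_x p * N[u]$: unlike in the KdV setting, it is not pointwise controlled by a multiple of $u^2+u_x^2$, and overcoming this requires the rearrangement of $N[u]$ as a divergence plus a pure cubic remainder — which isolates a sign-definite piece via the positivity of $u$ — in order to absorb the nonlocal contribution into the coercive transport controlled by $\dot z$.
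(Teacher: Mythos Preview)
There are two genuine gaps in your argument.

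\textbf{Wrong direction of the estimate.} You conclude with the lower bound $\frac{d}{dt}\mathrm{I}_{t_0}^R(t)\geq -Ce^{-R/6}$ and claim that integrating over $[t,t_0]$ proves the first inequality in \eqref{ineq_Itzero_It}. But integrating a lower bound on the derivative gives $\mathrm{I}_{t_0}^R(t_0)-\mathrm{I}_{t_0}^R(t)\geq -Ce^{-R/6}(t_0-t)$, which is the wrong sign and not even uniform in $t$. What is needed is an \emph{upper} bound on $\frac{d}{dt}\mathrm{I}_{t_0}^R$, since one must show that $\mathrm{I}$ cannot increase by more than $Ce^{-R/6}$ between $t$ and $t_0$. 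Your argument is internally inconsistent on this point: you correctly absorb $u^2u_x^2$ into $\tfrac12\dot z(u^2+u_x^2)$ (an upper-bound step), but then declare the nonnegative term $2u\,p*(\tfrac32 uu_x^2+u^3)$ to be ``helpful'' --- in fact its positivity is precisely the obstruction, since it contributes with a plus sign to $\frac{d}{dt}\mathrm{I}$. The paper bounds this term from above by absorbing it into the coercive transport via the inequality $(1-\partial_x^2)^{-1}\Psi'\leq 2\Psi'$ (which follows from $|\Psi'''|\leq\tfrac{1}{10}\Psi'$), not by exploiting a sign.

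\textbf{Missing time-decay for integrability.} Even with the sign corrected, your bound on the inner-region contribution is only $O(e^{-R/6})$, constant in $t$. Integrating this over $[t,t_0]$ yields $(t_0-t)e^{-R/6}$, which blows up as $t\to-\infty$ and cannot give a bound uniform in $t$. You mention that $s\mapsto x(s)-z(s)$ has slope at least $\gamma c$ but do not use it: this is exactly what yields, for $x\in\Omega_t^{\rm in}$ and $t\leq t_0$, the sharper estimate $x-z_{t_0}^R(t)\leq R_0-R-\gamma c(t_0-t)$, hence $\Psi'\lesssim e^{(R_0-R-\gamma c(t_0-t))/6}$. The resulting bound $\frac{d}{dt}\mathrm{I}_{t_0}^R(t)\leq Ce^{-(R+\gamma c(t_0-t))/6}$ is then integrable in time and gives the desired $Ce^{-R/6}$. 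Finally, your treatment of the nonlocal piece $u\,\partial_x p * u_x^3$ on $\Omega_t^{\rm out}$ invokes ``$x\in\supp\Psi'$'', but $\Psi'>0$ everywhere, so the kernel bound $|\partial_x p(x-y)|\leq\tfrac12 e^{-(R-R_0)}$ is not justified as stated; the paper instead handles this term via the same commutator trick $(1-\partial_x^2)^{-1}\Psi'\leq 2\Psi'$.
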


\begin{rem}
Notice that we are not assuming that $u(t)$ is an $H^1$-almost localized solution. This shall important to study limit objects in Section \ref{sec_MT1}, where hypothesis \eqref{L_infty_bound_outside}-\eqref{hip_z_dot} shall be guaranteed by a modulation argument.
\end{rem}

\begin{proof}
See the appendix, Section \ref{tech_lem_appendix}.
\end{proof}

\subsection{Comments on the method of proof of Theorem \ref{MT2}}
Before going further, for the sake of clarity, let us sketch the ideas of the proof of Theorem \ref{MT2}. We shall proceed as follows: First, we start by studying properties of solutions with compactly supported momentum density. In particular, we shall prove that for this class of solutions there exists a Lyapunov functional, which is related to the last point on the support of the momentum density. Then, in the next section, we shall prove that every $H^1$-almost localized solution of equation \eqref{nov_eq_2} has compactly supported momentum density, and hence all the properties proved in the previous section hold. This shall be a consequence of the finite speed of propagation of the momentum density and the time-uniform exponential decay of $H^1$-almost localized solutions. Then, by using the Lyapunov functional we shall prove that $u(t)$ evaluated at the integral line associated to the last point of the support of the momentum density is constant in time. Finally, we show that the latter fact forces $u(t)$ to be a peakon.

\subsection{A Lyapunov functional for solutions with compactly supported momentum density}\label{jump_section}

In this section we shall assume that we are working with a solution of equation \eqref{nov_eq_2} such that the support of its momentum density is bounded from above. In the next sections we shall prove that almost localized solutions enjoy this property.

\medskip

Before going further, we need to introduce the flow $q$ associated with $u^2$, which is  defined by
\begin{align}\label{ODE_q_nov}
\begin{cases} 
q_{t}(t,x)=u^2\big(t,q(t,x)\big),
\\ q(0,x)=x.
\end{cases}
\end{align}
From \cite{WuYi2} we know that the solutions associated to this ODE satisfy, for every $t\in\R$,
\begin{align}\label{eq_flow}
y\left(t,q(t,x)\right)q_x(t,x)^{\frac{3}{2}}=y_0(x)
\end{align}
Moreover, by differentiating \eqref{ODE_q_nov} with respect to $x\in\R$ we also obtain
\begin{align}\label{derivative_flow_line}
q_x(t,x)=\exp\left(2\int_0^t u\big(s,q(s,x)\big)u_x\big(s,q(s,x)\big)ds\right).
\end{align}

Now we intend to study what consequences the existence of this last point on the support of $y(t)$ has. In this regard, we shall need the following definition
\[
x_+(t):=\inf\left\{x\in\R: \ \supp y(t)\subseteq(-\infty,x(t)+x]\right\}.
\]
We emphasize that during this section we are assuming that $x_+(\cdot)$ is well-defined. The following lemma show us that under these assumptions the map $t\mapsto x(t)+x_+(t)$ is actually an integral line of $u^2(t)$.
\begin{lem}\label{integral_line_nov}
Suppose that $u\in C(\R,H^1(\R))$ is an $H^1$-almost localized solution of \eqref{nov_eq_2} with $\inf_\R\dot{x}\geq c>0$. Moreover, assume that there exists $r\in\R$ such that for all $t\in\R$ it holds \begin{align}\label{supp_y_prop_nov}
\supp y\big(t,\cdot+x(t)\big)\subset(-\infty,r].
\end{align}
Then, for all $t\in\R$, we have \begin{align}\label{supp_and_integral}
x(t)+x_+(t)=q\big(t,x(0)+x_+(0)\big),
\end{align}
where $q(\cdot,\cdot)$ is defined by \begin{align}\label{def_q_nov}
\begin{cases}
q_t(t,x)=u^2\big(t,q(t,x)\big), & (t,x)\in\R^2,
\\ q(0,x)=x, & x\in\R.
\end{cases}
\end{align}
Additionally, for all $t\in\R$ and $z\geq x_+(t)$ we have \begin{align}\label{equality_supp}
u\big(t,x(t)+z\big)=-u_x\big(t,x(t)+z\big).
\end{align}
\end{lem}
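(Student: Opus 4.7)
The plan is to prove \eqref{equality_supp} first, directly from the convolution representation of $u(t)$ in terms of its momentum, and then derive \eqref{supp_and_integral} from the transport of $y(t)$ along the flow $q$ of $u^2$, using the identity \eqref{eq_flow}.

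For \eqref{equality_supp}, I would fix $t \in \R$ and $z \geq x_+(t)$ and set $\xi := x(t) + z$. Since $u(t) \in Y_+$ by hypothesis, formulas \eqref{positive_mom_1}-\eqref{positive_mom_2} extend by density from $C_0^\infty(\R)$ to $Y_+$, with the integrals understood against the finite non-negative Radon measure $y(t)$:
\[
u(t,\xi) = \tfrac{1}{2}\int_{(-\infty,\xi]} e^{x'-\xi}\, dy(t,x') + \tfrac{1}{2}\int_{(\xi,+\infty)} e^{\xi-x'}\, dy(t,x'),
\]
and analogously for $u_x(t,\xi)$ with the first integral carrying a minus sign. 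By \eqref{supp_y_prop_nov}, $\supp y(t) \subseteq (-\infty, x(t)+x_+(t)] \subseteq (-\infty,\xi]$, so the second integral vanishes, and comparing the two expressions yields $u(t,\xi) = -u_x(t,\xi)$, which is \eqref{equality_supp}.

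For \eqref{supp_and_integral}, \eqref{derivative_flow_line} gives $q_x(t,x) > 0$ for every $t,x \in \R$, so $q(t,\cdot)$ is a strictly increasing homeomorphism of $\R$. Rewriting \eqref{eq_flow} in measure language, a change of variables shows that $y(t,\cdot)$ is the pushforward under $q(t,\cdot)$ of the measure $q_x(t,\cdot)^{-1/2}\, y_0$; since $q_x(t,\cdot)^{-1/2}$ is positive and continuous, this measure has the same support as $y_0$, hence $\supp y(t,\cdot) = q(t, \supp y_0)$. Taking the supremum of both sides, which is preserved by the strictly increasing map $q(t,\cdot)$, and invoking the definitions $\sup \supp y(t) = x(t) + x_+(t)$ and $\sup \supp y_0 = x(0) + x_+(0)$ yields \eqref{supp_and_integral}.

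The main technical obstacle is to justify the support identity $\supp y(t,\cdot) = q(t, \supp y_0)$ when $y_0$ is only a non-negative Radon measure, since \eqref{eq_flow} is classically a pointwise identity valid for smooth densities. To handle this, I would approximate $u_0 \in Y_+$ by smooth data $u_{0,n} := p * (\rho_n * y_0)$ with $\rho_n$ the mollifier from \eqref{def_rho}, so that $y_{0,n} \in L^1 \cap \mathcal{M}_b^+$ is uniformly bounded in $Y_+$ and $u_{0,n} \to u_0$ in $H^1(\R)$. For each $n$, Theorem \ref{GWP_smooth} yields a smooth global solution $u_n$ for which the pushforward identity holds classically. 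Theorem \ref{theorem_gwp}(2) then provides $u_n \to u$ in $C([-T,T], H^1(\R))$ and $y_n \rightharpoonup y$ in $C_{ti}([-T,T],\mathcal{M})$ for every $T>0$; together with the uniform bound implicit in \eqref{derivative_flow_line}, this yields local uniform convergence of the flows $q_n \to q$ on $[-T,T]\times\R$. Combining this with Lemma \ref{lem_measures_2} and the positivity of $y_n$ allows the passage to the limit and concludes the proof.
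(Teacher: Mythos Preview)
Your argument for \eqref{equality_supp} is correct and identical to the paper's.

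For \eqref{supp_and_integral}, the overall strategy---approximate by smooth data, use the classical identity \eqref{eq_flow}, pass to the limit via Theorem~\ref{theorem_gwp} and Lemma~\ref{lem_measures_2}---matches the paper's, but your sketch delivers only one of the two required inequalities. Mollifying $u_0$ gives $\supp y_{0,n}\subset(-\infty,x(0)+x_+(0)+\tfrac1n]$; by \eqref{eq_flow} the smooth density $y_n(t)$ then vanishes to the right of $q_n\big(t,x(0)+x_+(0)+\tfrac1n\big)$, and Lemma~\ref{lem_measures_2} forces $y(t)\equiv0$ on $\big(q(t,x(0)+x_+(0)),\infty\big)$, i.e.\ $x(t)+x_+(t)\le q\big(t,x(0)+x_+(0)\big)$. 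The reverse inequality, however, does not follow from the same approximation: weak convergence $y_n(t)\rightharpoonup y(t)$ together with lower semicontinuity on open sets says nothing about mass \emph{persisting} at the right edge, so $\sup\supp y_n(t)\to q\big(t,x(0)+x_+(0)\big)$ does not yield $\sup\supp y(t)\ge q\big(t,x(0)+x_+(0)\big)$. Your alternative route through the full measure identity $y(t)=q(t,\cdot)_*\big(q_x(t,\cdot)^{-1/2}y_0\big)$ would close this, but passing it to the limit requires $q_{n,x}\to q_x$, which via \eqref{derivative_flow_line} needs pointwise convergence of $u_{n,x}$ along characteristics---something not provided by $u_n\to u$ in $C_tH^1_x$.

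The paper obtains the missing inequality by mollifying $u(t^*)$ rather than $u_0$ and running the approximate flow \emph{backward} to time $0$: the very same lower-semicontinuity mechanism then shows $y(0)\equiv0$ on an interval of the form $[x(0)+x_+(0)-\eta,\infty)$ with $\eta>0$, contradicting the definition of $x_+(0)$. Both directions thus use only the convergence $q_n\to q$ locally uniformly in time (which follows from $u_n\to u$ in $C_tL^\infty_x$ via Sobolev embedding) and never the convergence of $q_{n,x}$.
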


\begin{proof}
See the appendix, Section \ref{appendix_integral_line}.
\end{proof}

In the sequel we shall need the following definitions associated to the operator $(1-\partial_x^2)^{-1}$. From now on we denote by $p_+$ and $p_-$ the following operators
\[
p_+*f(x):=\dfrac{e^{-x}}{2}\int_{-\infty}^x e^zf(z)dz \quad \hbox{and}\quad p_-*f(x):=\dfrac{e^{x}}{2}\int_{x}^\infty e^{-z}f(z)dz.
\]
Note that $p=p_++p_-$. The following crucial lemma give us the existence of a Lyapunov functional for solutions with compactly supported momentum density.
\begin{lem}
Under the hypothesis of Lemma \ref{integral_line_nov}, the map $t\mapsto u\big(t,x(t)+x_+(t)\big)$ defines a bounded increasing function on $\R$.
\end{lem}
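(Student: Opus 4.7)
\emph{Boundedness} is immediate from the conservation of $E(u)=\|u\|_{H^1}^2$ and the one‑dimensional embedding $\|u\|_{L^\infty}\le \|u\|_{H^1}$: since $y\ge0$ forces $u\ge 0$, we have $0\le h(t):=u(t,x(t)+x_+(t))\le \|u_0\|_{H^1}$. For the monotonicity, set $\xi(t):=x(t)+x_+(t)$. The inclusion $\supp y(t,\cdot)\subset(-\infty,\xi(t)]$ and the identity $u=p*y$ give the key representation
\[
h(t)=\int_{-\infty}^{\xi(t)}\tfrac{e^{z-\xi(t)}}{2}\,y(t,z)\,dz=\tfrac{1}{2}e^{-\xi(t)}\mathcal{I}(t),\qquad \mathcal{I}(t):=\int_{-\infty}^{\xi(t)}e^{z}y(t,z)\,dz.
\]
Combined with the identity $\xi'(t)=u^{2}(t,\xi(t))=h^{2}(t)$ from Lemma~\ref{integral_line_nov}, this yields the clean evolution law
\[
h'(t)=-h^{3}(t)+\tfrac{1}{2}e^{-\xi(t)}\mathcal{I}'(t).
\]

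\emph{Computing $\mathcal{I}'(t)$ in the smooth case.} Assuming first that $y(t,\cdot)$ is smooth (so $y(t,\xi(t))=0$ on the right endpoint of its support), I would rewrite \eqref{nov_eq_y} in the conservative form $y_t+\partial_x(u^{2}y)+uu_x y=0$. A direct integration by parts, using the vanishing boundary condition at $\xi(t)$, then gives
\[
\mathcal{I}'(t)=\xi'(t)e^{\xi(t)}y(t,\xi(t))+\int_{-\infty}^{\xi(t)}e^{z}y_t\,dz=\int_{-\infty}^{\xi(t)}e^{z}\,u(u-u_x)\,y\,dz,
\]
which is already non‑negative because $u\ge 0$, $u-u_x\ge 0$ (from $|u_x|\le u$), and $y\ge 0$. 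By itself this does not yet imply $h'\ge 0$, since we still must dominate the term $-h^{3}$.

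\emph{Extracting the sign of $h'$.} To close the computation I would introduce the primitives
\[
A(z):=\int_{-\infty}^{z}e^{z'}y(t,z')\,dz',\qquad B(z):=\int_{z}^{\infty}e^{-z'}y(t,z')\,dz',
\]
for which the elementary identities $u-u_x=e^{-z}A$, $u+u_x=e^{z}B$, $y=e^{-z}A'$, $B(\xi)=0$ and $A(\xi)=\mathcal{I}$ hold. Substituting these into the formula for $\mathcal{I}'(t)$ and performing two integrations by parts (using $A^{2}A'=(A^{3}/3)'$, $AA'=(A^{2}/2)'$, and $B'(z)=-e^{-2z}A'(z)$) produces the identity
\[
\tfrac{1}{2}e^{-\xi(t)}\mathcal{I}'(t)=\tfrac{1}{8}e^{-3\xi(t)}\mathcal{I}(t)^{3}+\tfrac{e^{-\xi(t)}}{4}\int_{-\infty}^{\xi(t)}e^{-2z}A(z)^{3}\,dz=h^{3}(t)+\tfrac{e^{-\xi(t)}}{4}\int_{-\infty}^{\xi(t)}e^{-2z}A(z)^{3}\,dz.
\]
Plugging this back into the evolution law for $h$ yields the remarkable cancellation
\[
h'(t)=\tfrac{e^{-\xi(t)}}{4}\int_{-\infty}^{\xi(t)}e^{-2z}A(z)^{3}\,dz\;\ge\;0.
\]

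\emph{Passage to the general case.} To remove the smoothness assumption I would approximate $u_0\in Y_+$ by a sequence $u_{0,n}$ with $y_{0,n}$ smooth, non‑negative, and compactly supported to the left of $x(0)+x_+(0)$, with $u_{0,n}\to u_0$ in $H^{1}(\R)$. Theorems~\ref{GWP_smooth} and~\ref{theorem_gwp} yield smooth global solutions $u_n$ with $u_n\to u$ in $C([-T,T],H^{1})$ and $y_n\rightharpoonup y$ in $C_{ti}([-T,T],\mathcal{M}_b)$; finite speed of propagation for the transport equation \eqref{nov_eq_y} preserves the right‑support condition at each time. Stability of ODE flows under $L^\infty$ perturbations of the velocity field then gives $\xi_n(t)\to \xi(t)$ pointwise, and hence $h_n(t)\to h(t)$ by uniform convergence of $u_n$ on compacts. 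Since each $h_n$ is non‑decreasing by the smooth‑case computation, so is the pointwise limit $h$. The main technical obstacle is precisely this passage to the limit — ensuring that the rightmost point of $\supp y_n(t,\cdot)$ really converges to that of $y(t,\cdot)$ — but the required control is encoded in the $C_{ti}$ weak convergence of $y_n$ and the continuity of the flow.
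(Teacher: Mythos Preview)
Your proof is correct, and the closed formula $h'(t)=\tfrac{1}{4}e^{-\xi(t)}\int_{-\infty}^{\xi(t)}e^{-2z}A(z)^{3}\,dz$ is exactly what one obtains, but you reach it by a genuinely different route than the paper. The paper never touches the momentum equation here: it fixes $\varepsilon>0$, follows the characteristic $x_\varepsilon(t)=q(t,x(0)+x_+(0)+\varepsilon)$ (which stays strictly to the right of $\supp y(t)$, so $u$ is $H^3$ there), and uses the nonlocal form \eqref{nov_eq_2} to derive the identity
\[
\tfrac{d}{dt}u_x\big(t,x_\varepsilon(t)\big)=\tfrac{1}{2}u(u^2-u_x^2)-\tfrac{1}{2}\big(p_+*(u-u_x)^3+p_-*(u+u_x)^3\big),
\]
taken from the blow-up literature. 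At $x_\varepsilon(t)$ one has $u=-u_x$, so the first term vanishes and $p_-*(u+u_x)^3$ vanishes as well; nonnegativity of $(u-u_x)^3$ then gives $\tfrac{d}{dt}u_x\le 0$, hence $\tfrac{d}{dt}u\ge 0$, and one simply lets $\varepsilon\to0$. Your computation via $h=\tfrac{1}{2}e^{-\xi}\mathcal{I}$, the conservative form $y_t+(u^2y)_x+uu_xy=0$, and the $A,B$ primitives is more hands-on but lands on the same expression (since $(u-u_x)=e^{-z}A$, one checks $\tfrac{1}{2}p_+*(u-u_x)^3(\xi)=\tfrac{1}{4}e^{-\xi}\int e^{-2z}A^3$). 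What the paper's regularization buys is that no approximation of the \emph{solution} is needed: evaluating at $x_\varepsilon(t)$ rather than at $\xi(t)$ already places you where $u$ is smooth, and the passage $\varepsilon\to0$ uses only continuity of $u$ and Cauchy--Lipschitz stability of the flow map. This completely sidesteps the issue you correctly flag about controlling the rightmost point of $\supp y_n(t)$ under mollification of the initial data.
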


\begin{proof}
The proof follows from some direct computation together with a regularization argument and  \eqref{equality_supp}. We point out that the computation of the time derivative along characteristics has already been made in \cite{ChGuLiQu}.

\medskip

Let $\varepsilon>0$ small enough. We set the point $x_\varepsilon=x(0)+x_+(0)+\varepsilon$. Now, we define the integral line associated to $x_\varepsilon$, that is, $x_\varepsilon(t)=q(t,x_\varepsilon)$, where $q(\cdot,\cdot)$ is defined in \eqref{def_q_nov}.

\medskip

Now, notice that $\Vert u_x(t)\Vert_{L^\infty}\leq C$, and hence $u(t)$ is Lipschitz continuous with respect to the space variable. Therefore, by using Cauchy-Lipschitz's Theorem for ODEs we deduce that \[
x_\varepsilon(\cdot)\to x(\cdot)+x_+(\cdot) \ \hbox{ in } \ C(\R) \ \hbox{ as } \ \varepsilon\to 0.
\]
Moreover, since $u(t)$ is continuous the latter convergence result implies that
\begin{align}\label{conv_x_varepsilon}
u(\cdot,x_\varepsilon(\cdot))\to u(\cdot,x(\cdot)+x_+(\cdot)) \ \hbox{ in } \ C(\R) \ \hbox{ as } \ \varepsilon\to 0.
\end{align}
Notice that by using \eqref{equality_supp} together with the previous convergence result we also obtain that \[
u_x(\cdot,x_\varepsilon(\cdot))\to -u(\cdot,x(\cdot)+x_+(\cdot)) \ \hbox{ in } \ C(\R) \ \hbox{ as } \ \varepsilon\to 0.
\]
On the other hand, since $\supp y(t)\subseteq (-\infty,x(t)+x_+(t)]$, we deduce that $u(t)$ is $H^3$ in a neighborhood of $x_\varepsilon(t)$. Therefore, by using the equation and recalling that $(p*\cdot):L^2\to H^2$, we conclude that $u_t(t)$ is differentiable with respect to $x$ in a neighborhood of $x_\varepsilon(t)$. 

\medskip

Now we intend to compute the time-derivative of $u$ along the integral line $x_\varepsilon(t)$. For the sake of simplicity we shall actually compute the time-derivative of the map $t\mapsto u_x(t,x_\varepsilon(t))$, which turns out to be easier. In fact, using Lemma \ref{integral_line_nov} we get 
\begin{align*}
\dfrac{d}{dt}u_x\big(t,x_\varepsilon(t)\big)&=u_{tx}+u^2u_{xx}=-\dfrac{1}{2}uu_x^2+u^3-p*\Big(\dfrac{3}{2}uu_x^2+u^3\Big)-\dfrac{1}{2}p_x*u_x^3
\end{align*}
Thus, after integration by parts and by using the operators $p_\pm$ we obtain \begin{align}\label{dt_charact_u}
\dfrac{d}{dt}u_x(t,x_\varepsilon(t))=\dfrac{1}{2}u(u^2-u_x^2)-\dfrac{1}{2}(p_+*(u-u_x)^3+p_-*(u+u_x)^3),
\end{align}
where all the right-hand side is evaluated at $x=x_\varepsilon(t)$. Hence, by using \eqref{equality_supp} and due to the fact that the kernels of $p_+$ and $p_-$ are both positive and that $\vert v_x\vert\leq v$ for any $v\in Y_+$, we deduce \begin{align*}
\dfrac{d}{dt}u_x(t,x_\varepsilon(t))\leq0, \quad \hbox{and therefore}\quad \dfrac{d}{dt}u(t,x_\varepsilon(t))\geq 0.
\end{align*}
Thus, $u(t,x_\varepsilon(t))$ is increasing, and hence, by using the convergence result \eqref{conv_x_varepsilon} we conclude that $u(t,x(t)+x_+(t))$ is increasing, what finish the proof of the lemma.
\end{proof}
As a corollary of the previous analysis we obtain the following key property. 
\begin{cor}\label{prop_limit}
Both maps $x(t)+x_+(t)$ and $\dot{x}(t)+\dot{x}_+(t)$ define non-decreasing functions. Moreover, these are $C^1$ and $C^0$ functions respectively and there exists $c_\pm\geq0$ such that \[
\lim_{t\to\pm\infty}\dot{x}(t)+\dot{x}_+(t)=\lim_{t\to\pm\infty}u^2(t,x(t)+x_+(t))\to c_\pm.
\]
\end{cor}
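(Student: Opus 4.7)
The plan is to read this corollary off as a direct consequence of the previous lemma combined with the integral-line identity \eqref{supp_and_integral}. The only real content is translating ``$u$ along the characteristic is bounded and increasing'' into ``$u^2$ along the characteristic is bounded and increasing'', plus the observation that a bounded monotone function on $\R$ has limits at both ends.

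First I would use Lemma \ref{integral_line_nov}: since $x(t)+x_+(t)=q(t,x(0)+x_+(0))$ is a characteristic of \eqref{def_q_nov}, it is automatically $C^1$ and
\[
\dot x(t)+\dot x_+(t)=u^2\bigl(t,x(t)+x_+(t)\bigr).
\]
This identifies the two quantities whose limits appear in the statement, and since $u\in C(\R,H^1(\R))\hookrightarrow C(\R\times\R)$ and $t\mapsto x(t)+x_+(t)$ is continuous, the right-hand side is $C^0$ in $t$. Thus $x+x_+$ is $C^1$ and $\dot x+\dot x_+$ is $C^0$, settling the regularity assertions.

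Next I would upgrade the monotonicity from $u$ to $u^2$. The previous lemma provides that $t\mapsto u(t,x(t)+x_+(t))$ is bounded and non-decreasing on $\R$. Because $u\in Y_+$ and $\lvert u_x\rvert\le u$ (a consequence of \eqref{positive_mom_1}--\eqref{positive_mom_2} together with $y\in\mathcal{M}_b^+$), we have $u\ge 0$ pointwise; squaring a non-negative bounded non-decreasing function preserves both properties, so $u^2(t,x(t)+x_+(t))$ is bounded and non-decreasing. This yields at once that $\dot x+\dot x_+\ge 0$ (so $x+x_+$ is non-decreasing) and that $\dot x+\dot x_+$ is itself non-decreasing, which are the two monotonicity claims.

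Finally, any bounded monotone function on $\R$ admits limits at $+\infty$ and $-\infty$. Applying this to $\dot x(t)+\dot x_+(t)$ produces the constants $c_\pm$, and non-negativity $c_\pm\ge 0$ is immediate from $u^2\ge 0$. There is no real obstacle here; the only small point to keep track of is that the monotonicity produced in the previous lemma concerns $u$ rather than $u^2$, and this is handled by invoking the sign of $u$ coming from $Y_+$.
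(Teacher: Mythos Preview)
Your argument is correct and follows essentially the same route as the paper's own proof, which is a single sentence invoking Lemma \ref{integral_line_nov} and the previous lemma. You have simply made explicit the two small points the paper leaves implicit: that $\dot x+\dot x_+=u^2$ along the characteristic via \eqref{supp_and_integral}--\eqref{def_q_nov}, and that the non-negativity of $u$ (from $Y_+$) allows one to pass from monotonicity of $u$ to monotonicity of $u^2$.
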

\begin{proof}
This is just a consequence of Lemma \ref{integral_line_nov} and the fact that $u(t,x(t)+x_+(t))$ is monotone on $\R$ and bounded, and hence we immediately conclude the existence of both limits at $\pm\infty$. 
\end{proof}

\subsection{Almost localized solutions have momentum density with compact support}

The following property ensures that the momentum density associated with an $H^1$-almost localized solution is compactly supported. This is the key fact of our proof. Notice that once we prove this property, all the results in Section \ref{jump_section} hold for $y(t)=(u-u_{xx})(t)$.

\begin{prop}\label{prop_compact_support}
Suppose that $u\in C(\R,Y_+)$ is an $H^1$-almost localized solution to equation \eqref{nov_eq_2}. Then, there exists $r\in\R$ such that for all $t\in\R$ it holds \begin{align}\label{compact_support}
\supp y(t,\cdot+x(t))\subset(-\infty,r].
\end{align}
\end{prop}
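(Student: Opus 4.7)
The plan is to argue by contradiction. Suppose the conclusion fails: by time-translation invariance, for every $R>0$ we may assume there exists $x^{*}\in\supp y_{0}$ with $\sigma_{0}:=x^{*}-x(0)\geq R$. Let $q(\cdot,x^{*})$ be the flow line of $u^{2}$ defined by \eqref{ODE_q_nov}, and set $\sigma(t):=q(t,x^{*})-x(t)$. Since the transport structure of $y$ forces $\supp y$ to be propagated by the flow, $q(t,x^{*})\in \supp y(t)$ for every $t\in\R$. I take for granted that any $H^{1}$-almost localized solution in $Y_{+}$ that is not identically zero satisfies $\inf_{\R}\dot x\geq c$ for some $c>0$ (a feature of the modulation procedure underlying Proposition \ref{MT3}).

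From Proposition \ref{MT3} and $\inf\dot x\geq c$, one gets
\[
\dot\sigma(t)\;=\;u^{2}\bigl(t,x(t)+\sigma(t)\bigr)-\dot x(t)\;\leq\;C^{2}e^{-2\sigma/K}-c,
\]
which is $\leq -c/2$ whenever $\sigma\geq R_{c}$ for an explicit threshold $R_{c}$. Taking $R>R_{c}$ and running backwards in time yields the key bound $\sigma(t)\geq R+\tfrac{c}{2}|t|$ for all $t\leq 0$. Since $|u_{x}|\leq u$ on $Y_{+}$, combining with \eqref{derivative_flow_line} gives $|uu_{x}|(s,q(s,x^{*}))\leq C^{2}e^{-2R/K}e^{cs/K}$ for $s\leq 0$, so that $\bigl|\int_{0}^{t}uu_{x}(s,q(s,x^{*}))\,ds\bigr|\leq (C^{2}K/c)e^{-2R/K}$ uniformly in $t\leq 0$. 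For $R$ large enough this forces $q_{x}(t,x)\in[e^{-1},e]$ for every $t\leq 0$ and every $x$ in a neighborhood of $x^{*}$.

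Integrating the smooth-case flow identity $y(t,q(t,x))\,q_{x}^{3/2}=y_{0}(x)$ from \eqref{eq_flow} together with the Jacobian control just obtained yields
\[
y(t)\bigl([q(t,x^{*}-1),q(t,x^{*}+1)]\bigr)\;\geq\;e^{-1/2}\,y_{0}\bigl([x^{*}-1,x^{*}+1]\bigr)\;=:\;c_{1}m_{0},
\]
with $m_{0}>0$ since $x^{*}\in\supp y_{0}$. Plugging this into the representation $(u+u_{x})(t,z)=\int_{[z,\infty)}e^{-(w-z)}\,dy(t,w)$ together with the diameter bound $q(t,x^{*}+1)-q(t,x^{*}-1)\leq 2e$, we deduce $(u+u_{x})(t,q(t,x^{*}-1))\geq e^{-2e}c_{1}m_{0}$. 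On the other hand, $|u+u_{x}|\leq 2u$ combined with Proposition \ref{MT3} gives
\[
(u+u_{x})\bigl(t,q(t,x^{*}-1)\bigr)\;\leq\;2C\,e^{-(R+c|t|/2-2e)/K}.
\]
Sending $t\to-\infty$ makes the right-hand side vanish while the left is bounded below by a positive constant — contradiction. Thus $\supp y_{0}$ is bounded above, and the very same argument applied at an arbitrary $t_{0}\in\R$ produces a bound $r$ independent of $t_{0}$.

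The main technical obstacle is the use of the flow identity $y(t,q(t,\cdot))\,q_{x}^{3/2}=y_{0}$ in the displayed mass lower bound above: this identity is classical only for the smooth solutions of Theorem \ref{GWP_smooth}, so in the measure-valued setting it must be recovered through an approximation procedure — regularizing $u_{0}\mapsto \rho_{n}*u_{0}$, applying Theorem \ref{theorem_gwp}(2) to pass $u_{n}\to u$ in $C([-T,T],H^{1})$ and $y_{n}\rightharpoonup y$ in $C_{ti}$, checking uniform convergence of the flows $q_{n}\to q$ on compact time intervals, and finally invoking the lower semicontinuity of Lemma \ref{lem_measures_2} on the closed image intervals. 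All remaining steps reduce to scalar ODE comparison arguments and routine exponential-decay estimates from Proposition \ref{MT3}.
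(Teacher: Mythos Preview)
Your strategy coincides with the paper's: both rest on (i) the linear drift $\sigma(t)\geq R+\tfrac{c}{2}|t|$ of flow lines starting far to the right of $x(0)$, obtained from the exponential decay of Proposition~\ref{MT3} and $\inf\dot x\geq c$; (ii) uniform two-sided bounds on the Jacobian $q_x$ via \eqref{derivative_flow_line}; and (iii) the smooth flow identity \eqref{eq_flow}, combined with the approximation $u_{0,n}=\rho_n*u_0$. The only cosmetic difference is that the paper directly estimates $\int_{x(0)+r-1}^{\infty}y_n(0)\,dx$ by transporting to $t\to-\infty$ and then applies Lemma~\ref{lem_measures_2} to an open half-line to conclude $\langle y_0,\phi(\cdot-x(0)-r)\rangle=0$, whereas you run the same mechanism as a contradiction by tracking a single support point backwards.

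There is one real slip in your approximation outline. You write that the mass lower bound on $y(t)$ over the closed image interval follows by ``invoking the lower semicontinuity of Lemma~\ref{lem_measures_2} on the closed image intervals''. That lemma gives $\mu(V)\leq\liminf\mu_n(V)$ for \emph{open} $V$ --- the wrong direction for the lower bound you need on the limit measure over a compact set. The fix is either (a) to use instead the Portmanteau inequality $\mu(K)\geq\limsup\mu_n(K)$ for compact $K$, which is available here because Theorem~\ref{theorem_gwp}(2) gives the \emph{tight} convergence $y_n\rightharpoonup y$ in $C_{ti}([-T,T],\mathcal{M})$ (testing against $\phi\in C_b(\R)$); or, closer to the paper's own approach, (b) to run the entire contradiction at the level of the smooth approximants $u_n$: since $y_{0,n}([x^*-1,x^*+1])\geq y_0\bigl((x^*-\tfrac12,x^*+\tfrac12)\bigr)>0$ for large $n$ and the $u_n$ inherit the uniform-in-time exponential decay of $u$ (as the paper notes), the pointwise upper bound $(u_n+u_{n,x})(t,q_n(t,x^*-1))\leq 2C_* e^{-(R+c|t|/2-2e)/K}$ already contradicts the persistent lower bound for $t\ll 0$, and no limit passage in the mass inequality is required.
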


\begin{proof}
First of all notice that it is enought to prove the result for $t=0$. Moreover, notice that due to the fact that $y\in \mathcal{M}_b^+$, it is enough to prove the following property: Let $\phi\in C^\infty(\R)$ any function satisfying \[
\phi(x)\equiv 0 \,\hbox{ for }\, x\in\R_-, \quad \phi(x)\equiv1 \,\hbox{ for }\, x\in [1,\infty) \quad \hbox{and}\quad \phi'(x)\geq 0\ \forall x\in\R.
\]
Then, there exists $r^\star\in\R$ sufficiently large such that the following equality holds \begin{align}\label{contr_prop_supp}
\langle y(0),\phi(\cdot-x(0)-r^\star)\rangle=0.
\end{align}
Now, in order to prove \eqref{contr_prop_supp} we start by approximating $u_0$ by a sequence of smooth functions \[
u_{0,n}:=\rho_n*u_0\in H^\infty(\R)\cap Y_+(\R) \ \, \hbox{ and }\ \, y_{0,n}\rightharpoonup y_0 \ \hbox{ in }\ \mathcal{M},
\]
so that \eqref{convergence_h1_ti} holds for any $T>0$. We emphasize that the latter weak convergence is in the sense of Definition \ref{def_weakly_conv}. Notice that by Theorem \ref{GWP_smooth} we obtain that the solution $u_n(t)$ associated to $u_{0,n}$ belongs to $C(\R,H^\infty(\R))$ and its momentum density $y_n\in C_{ti}(\R,L^1(\R))$. 

\medskip 

On the other hand, notice also that for all $n\in\N$, the solution $u_n(t)$ is also $H^1$-almost localized with the same localizing function $x\mapsto x(t)$ and a radius $R_\varepsilon^n$ that converges to $R_\varepsilon$ as $n\to+\infty$. Moreover, since  the mollifier family $\rho_n$ have compact supports, by adding an universal constant to the one in front of the exponential in \eqref{exp_decay_u} 
we conclude that all the sequence $u_n(t)$ have the same time-uniform exponential decay, i.e., \[
u_n(t,\cdot+x(t))\leq C_*\exp(-\vert x\vert/K), \quad \hbox{for all }\ n\in\N,
\]
for some constant $C_*>C$ and some $K\geq 1$.

\medskip

On the other hand, notice that for any fixed $T>0$, there exists $n_0\in\N$ such that for all $n\geq n_0$ the following inequalities holds \begin{align}\label{approx_linfty}
\Vert u_{n}-u\Vert_{L_T^\infty H^1_x}<\dfrac{1}{10}\min\{\sqrt{c},\Vert u_0\Vert_{H^1}\}.
\end{align}
Moreover, by the $H^1$-almost localized hypothesis we deduce that there existe $r> 1$ sufficiently large such that \begin{align}\label{mthm_Linfty}
\Vert u(t)\Vert_{H^1(\vert x-x(t)\vert>r-1)}\leq \dfrac{1}{10}\min\left\{ \dfrac{c}{2^6},\sqrt{c},\Vert u_0\Vert_{H^1}\right\}, \quad \hbox{for all }\, t\in\R.
\end{align}
Notice that due to Sobolev's embedding, inequality \eqref{approx_linfty} implies that for all $n\geq n_0$ we have \begin{align}\label{sob_un}
u_n(t,x+x(t))\leq \dfrac{1}{5}\min\{\sqrt{c},\Vert u_0\Vert_{H^1}\} \quad \hbox{for all} \quad (\vert x\vert,t)\in [r-1,+\infty)\times [-T,T].
\end{align}
Finally, we need to introduce the flow $q_n$ associated to our approximate solution $u^2_n$,
\begin{align}\label{mthm_ODE_q}
\begin{cases} 
q_{t,n}(t,x)=u_n\big(t,q_n(t,x)\big)^2,
\\ q_n(0,x)=x.
\end{cases}
\end{align}
We recall that (see \cite{WuYi2}) the solutions associated to this ODE satisfy, for every $t\in\R$,
\begin{align}\label{mthm_eq_flow_main}
y_n\big(t,q_n(t,x)\big)q_{x,n}(t,x)^{\frac{3}{2}}=y_n(0,x)
\end{align}
Moreover, by differentiating \eqref{mthm_ODE_q} with respect to $x\in\R$ we also obtain
\begin{align}\label{mthm_derivative_flow_line}
q_{x,n}(t,x)=\exp\left(2\int_{0}^{t} u_n\big(s,q_n(s,x)\big)u_{x,n}\big(s,q_n(s,x)\big)ds\right).
\end{align}
Now we claim that due to the $H^1$-almost localization of $u(t)$ we have 
\begin{align}\label{claim_support_nov}
q_n\big(t,x(0)+r\big)-x(t)\geq r+\dfrac{c\vert t\vert}{2} \quad \hbox{and} \quad \dfrac{1}{C_0}\leq q_{x,n}\big(t,x(0)+r+x\big)\leq C_0,
\end{align}
for some $C_0>0$. For the sake of simplicity we shall show this fact at the end of the proof. Thus, assuming the previous inequalities and by using \eqref{mthm_eq_flow_main} we deduce
\begin{align*}
\int_{x(0)+r-1}^\infty y_n(0,x)dx&=\int_{x(0)+r-1}^{+\infty} y_n\big(t,q_n(t,x)\big)q_{x,n}^{3/2}(t,x)dx
\\ & \leq C_0^{1/2}\int_{x(0)+r-1}^{+\infty}y_n\big(t,q_n(t,x)\big)q_{x,n}(t,x)dx
\end{align*}
Now, by using \eqref{claim_support_nov} together with the uniform exponential decay of both $u_n(t,x+x(t))$ and $u_{n,x}(t,x+x(t))$ (see Proposition \ref{MT3}) we obtain 
\begin{align*}
\int_{x(0)+r-1}^{+\infty}y_n\big(t,q_n(t,x)\big)q_{x,n}(t,x)dx&\leq\int_{r-1+\frac{c}{2}\vert t\vert}^{+\infty}y_n(t,x+x(t))dx 
\\ & \lesssim e^{(r-1+c\vert t\vert/2)/K}+\int_{r-1+\frac{c}{2}\vert t\vert }^{+\infty} e^{-\frac{\vert x\vert}{K}}dx \xrightarrow{t\to-\infty}0.
\end{align*}
Therefore, due to the sequential weak-lower semicontinuity given in Lemma \ref{lem_measures_2} and the positivity of $y_0$ and $y_n$ for all $n\in\N$, we have that \begin{align*}
\langle y(0),\phi(\cdot-x(0)-r)\rangle&\leq \liminf_{n\to+\infty}\int_{x(0)+r-1}^{+\infty}y_n(0,x)dx=0,
\end{align*}
and hence, up to the proof of both inequalities in \eqref{claim_support_nov}, we conclude the proof of the proposition.

\medskip

\textit{Proof of \eqref{claim_support_nov}:} The proof is straightforward in some sense and only requires to integrate. In fact, it is enough to notice that due to  \eqref{mthm_Linfty} and Sobolev's embedding we have \[
u_n(t,q_n(t,x(0)+r))\leq \dfrac{\sqrt{c}}{4}.
\]
Hence, by plugging the latter inequality into \eqref{mthm_ODE_q} and using the fact that $\inf_{\R}\dot{x}(t)\geq c$ we infer that for all $t<0$ we have \[
\dfrac{d}{dt}q_n(t,x(0)+r)\leq\dfrac{c}{16} \quad \hbox{ which implies } \quad q_n(t,x(0)+r)-x(t)\geq r+\dfrac{c}{2}\vert t\vert,
\]
what finish the proof of the first inequality in \eqref{claim_support_nov}. 

\medskip

Finally, let us prove the boundedness of $q_x(t)$. Recall that due to the $H^1$-almost localization hypothesis and by using Proposition \ref{MT3} we have, in particular, that $u_n(t)$ has time-uniform exponential decay. Thus, by using the exponential decay of $u_n(t)$, the almost monotonicity of the energy, Sobolev's embedding and inequality \eqref{claim_support_nov} we deduce that for any $s\in \R$ it holds
\begin{align*}
u^2_n\big(s,q(s,x(0)+r)\big)&\leq\sup_{x\geq x(s)+r+\frac{1}{2}c\vert s\vert}u_n^2(s,x)\lesssim \exp\left(-\frac{2r+c\vert s\vert}{K} \right).
\end{align*}
Hence, due to the latter inequality and the fact that $\vert v_x\vert\leq v$ for any $v\in Y_+$ we obtain 
\[
\int_0^{+\infty} u_n\big(s,q_n(s,x(0)+r)\big)u_{x,n}\big(s,q_n(s,x(0)+r)\big)ds\leq C.
\]
Therefore, by plugging the latter inequality into formula \eqref{mthm_derivative_flow_line}  we deduce the existence of a constant $C_0>0$ such that $\tfrac{1}{C}\leq q_{x,n}\big(t,x(0)+r\big)\leq C$ for all $t\in\R$, what ends the proof. 
\end{proof}

\subsection{Proof of Theorem \ref{MT2}}

In this section we assume that we are under the hypothesis of Theorem \ref{MT2}. 

\medskip

Motivated by the study made in Section \ref{jump_section}, we define $x_+$ the corresponding quantity which give us the position of the last point on the support of $y(t)$, that is,
\[
x_+(t):=\inf\left\{x\in\R: \ \supp y(t)\subseteq(-\infty,x(t)+x]\right\}.
\]
Note that Proposition \ref{prop_compact_support} ensures that the map $t\mapsto x_+(t)$ is well-defined and bounded from above.

\medskip

Before getting into the details, let us start by explaining the idea of the proof: We shall proceed in two steps: First, we intend to prove that  $u(t,x(t)+x_+(t))$ does not depend on time, i.e. it is constant. Then, we shall prove that this property forces $u$ to achieve an equality only achievable by peakons.

\medskip

The following technical (but straightforward) lemma shall be crucial in the proof of Proposition \ref{prop_312} below. We postpone their proofs for the Appendix.
\begin{lem}\label{tech_ineq_lem}
Let $v\in Y_+$. Then, the following inequality holds \begin{align}\label{point_conv}
p*\left(3 vv_x^2+5 v^3\right)\geq 2v^3(x), \quad \forall x\in\R.
\end{align}
Moreover, equality holds in \eqref{point_conv} for some $x_0\in\R$ if and only if $v(\cdot)$ is a peakon, that is, there exists $c\in\R_+$ such that $v(x)=\sqrt{c}e^{-\vert x-x_0\vert}$.
\end{lem}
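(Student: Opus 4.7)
I propose to represent the difference $p*(3 v v_x^2 + 5 v^3)(x_0) - 2 v^3(x_0)$ as a triple integral against $dy^{\otimes 3}$ with a symmetric, pointwise non-negative kernel, and then read off the equality condition from its zero locus. By translation invariance, assume $x_0 = 0$. Since $v = p*y$ for the non-negative Radon measure $y = v - v_{xx}$, one has $v(w) = \tfrac12\int e^{-|w-z|}\,dy(z)$ and, for Lebesgue-a.e.\ $w$, $v_x(w) = \tfrac12\int \sgn(z-w)\,e^{-|w-z|}\,dy(z)$. Expanding $v v_x^2$ and $v^3$ as triple integrals and symmetrizing the three dummy labels yields
\[
3 v v_x^2 + 5 v^3 \;=\; \tfrac{1}{8}\iiint \bigl[5 + \sigma_{12} + \sigma_{13} + \sigma_{23}\bigr] e^{-\sum_{i=1}^3 |w - z_i|}\, dy^{\otimes 3}(z),
\]
where $\sigma_{ij} := \sgn(z_i - w)\sgn(z_j-w) \in \{\pm 1\}$. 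Convolving with $\tfrac12 e^{-|w|}$ and subtracting $2v^3(0) = \tfrac14\iiint e^{-\sum_i |z_i|}\,dy^{\otimes 3}$ gives
\[
p*(3vv_x^2 + 5v^3)(0) - 2v^3(0) \;=\; \tfrac{1}{16}\iiint K(z_1, z_2, z_3)\, dy^{\otimes 3},
\]
with the symmetric kernel
\[
K(z_1, z_2, z_3) \;=\; \int_\R e^{-|w|}\bigl[5 + \sigma_{12} + \sigma_{13} + \sigma_{23}\bigr] e^{-\sum_i |w - z_i|}\,dw \;-\; 4\, e^{-\sum_i |z_i|}.
\]

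\textbf{Non-negativity of $K$ and equality.} The kernel $K$ is symmetric in $(z_1,z_2,z_3)$ and invariant under the reflection $z \mapsto -z$. Ordering $z_1 \leq z_2 \leq z_3$ and using the reflection reduces the analysis to four cases according to the signs of the $z_i$'s. In each case, the $w$-axis decomposes into at most five intervals delimited by $0$ and the $z_i$'s, on each of which $\sgn(w)$ and all the $\sgn(w-z_i)$ are constant; hence the bracket $5+\sigma_{12}+\sigma_{13}+\sigma_{23}$ equals either $4$ or $8$, and the exponents are affine in $w$, so the sub-integrals are elementary. The calculation yields closed-form expressions such as
\[
K = e^{-z_2 - z_3}\bigl[(6 + 4(z_2 - z_1))\, e^{z_1} - 6\, e^{-z_1}\bigr] \qquad \text{for } 0 \leq z_1 \leq z_2 \leq z_3,
\]
\[
K = 4\,z_2\, e^{z_1 - z_2 - z_3} \qquad \text{for } z_1 < 0 \leq z_2 \leq z_3,
\]
and analogous formulas in the other two cases; in every case $K \geq 0$ follows from elementary inequalities (e.g.\ $e^{2z_1} \geq 1$ together with $z_2 \geq z_1 \geq 0$ in the first case, or $z_2 \geq 0$ in the second). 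This proves \eqref{point_conv}. For the equality case, $\iiint K\, dy^{\otimes 3} = 0$ combined with $K \geq 0$ and $y \geq 0$ forces $K \equiv 0$ on $(\supp y)^3$; since the closed forms give $K(a,a,a) > 0$ for any $a \neq 0$, we conclude $\supp y \subseteq \{0\}$, hence $y = c\,\delta_0$ for some $c \geq 0$, giving $v(x) = \tfrac{c}{2}\,e^{-|x|} = \sqrt{c^*}\,e^{-|x|}$ with $c^* = c^2/4$; un-translating recovers a peakon centered at $x_0$.

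\textbf{Main obstacle.} The approach is conceptually transparent but computationally demanding: the explicit evaluation of the $w$-integral across up to five sub-intervals in each of four sign patterns requires careful bookkeeping of signs and exponents. A preliminary mollification $y_n = \rho_n * y$ circumvents the fact that the pointwise formula for $v_x$ may fail on atoms of $y$, and the final inequality passes to general $y \in \mathcal{M}_b^+$ by weak-$*$ continuity of $p*$ applied to the cubic expression in $v$.
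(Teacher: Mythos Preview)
Your proposal is correct but takes a genuinely different route from the paper's proof. The paper's argument is essentially a two-line application of the AM--GM inequality $a^2+b^2\geq \pm 2ab$ to $v$ and $v_x$ inside the convolution, followed by an integration by parts: writing $p=p_++p_-$, one bounds
\[
e^{-x}\int_{-\infty}^x e^{\eta}\big(vv_x^2+v^3\big)\,d\eta \;\geq\; 2e^{-x}\int_{-\infty}^x e^{\eta}\,v^2v_x\,d\eta \;=\; \tfrac{2}{3}v^3(x)-\tfrac{2}{3}e^{-x}\int_{-\infty}^x e^{\eta}\,v^3\,d\eta,
\]
and symmetrically on $(x,\infty)$; rearranging gives the inequality directly. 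The equality case then reads off the ODEs $v_x=v$ on $(-\infty,x_0)$ and $v_x=-v$ on $(x_0,\infty)$, which forces the peakon profile. No case analysis is needed.

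Your approach instead works entirely at the level of the momentum measure $y=v-v_{xx}$, expanding the cubic expressions as triple integrals in $dy^{\otimes 3}$ and reducing the inequality to the pointwise non-negativity of an explicit symmetric kernel $K$. The piecewise evaluation of $K$ across the sign regions is considerably more laborious than the paper's one-line AM--GM step (I checked your closed forms in the two displayed cases and they are correct). On the other hand, your argument makes the equality characterization especially transparent: $K(a,a,a)>0$ for $a\neq 0$ immediately pins $\supp y$ to a single point, so the peakon appears directly as a Dirac momentum. Both proofs are valid; the paper's is much shorter, while yours is more structural and would generalize naturally to other functionals that are polynomial in $(v,v_x)$.
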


\begin{proof}
See the appendix, Section \ref{tech_ineq_lem_appendix}.
\end{proof}


\begin{prop}\label{prop_312}
Let $u\in C(\R,H^1)$ be an $H^1$-almost localized solution to equation \eqref{nov_eq_2}. Then, $u(t)$ must to be a peakon.
\end{prop}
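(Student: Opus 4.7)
The plan is to combine the Lyapunov functional from Section 3.3 with a rigidity identity at the last point $x(t)+x_+(t)$ of the support of $y(t)$. Set $\mathcal{L}(t):=u(t,x(t)+x_+(t))$. By Corollary \ref{prop_limit}, $\mathcal{L}$ is bounded and non-decreasing on $\R$, so it admits limits $u_\pm:=\lim_{t\to\pm\infty}\mathcal{L}(t)$ with $0\le u_-\le u_+<\infty$.

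First I would refine the characteristic computation \eqref{dt_charact_u}: letting $\varepsilon\to 0^+$ so that $x_\varepsilon(t)\to x(t)+x_+(t)$ and using \eqref{equality_supp} (which gives $u=-u_x$ on $[x(t)+x_+(t),\infty)$, so $u^2-u_x^2=0$ at the endpoint and $(u+u_x)^3\equiv 0$ on this half-line, killing the $p_-$ contribution), one obtains the rigidity-friendly formula
\begin{equation}\label{lyap_derivative}
\frac{d}{dt}\mathcal{L}(t)\;=\;\tfrac{1}{2}\,p_+*(u-u_x)^3\big(t,\,x(t)+x_+(t)\big).
\end{equation}
Because $u\in Y_+$ gives $u-u_x\ge 0$ pointwise, the right-hand side of \eqref{lyap_derivative} is non-negative and vanishes at some $t_0$ if and only if $u=u_x$ on $(-\infty,x(t_0)+x_+(t_0))$. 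Together with $u=-u_x$ to the right and continuity at the endpoint, this forces $u(t_0,x)=\mathcal{L}(t_0)\,e^{-|x-x(t_0)-x_+(t_0)|}$: a peakon of amplitude $\mathcal{L}(t_0)$.

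To upgrade this pointwise-in-$t$ identity to every $t\in\R$ I would build limit objects at $\pm\infty$. Fix $t_n\to+\infty$ and set $u_n(t,x):=u(t+t_n,\,x+x(t_n))$; each $u_n$ solves \eqref{nov_eq_2} in $C(\R,Y_+)$ with $E(u_n)=E(u_0)$, and by Proposition \ref{MT3} inherits the time-uniform exponential decay centered at $x_n(t):=x(t+t_n)-x(t_n)$. Together with Theorem \ref{theorem_gwp}(2) and Lemma \ref{lem_measures_2}, this tightness allows one to extract a diagonal subsequence for which $u_n\to\tilde u^+$ strongly in $C_{\rm loc}(\R,H^1)$, with $\tilde u^+\in C(\R,Y_+)$ an $H^1$-almost localized solution centered at $\tilde x^+(t):=\lim_n x_n(t)$, with non-negative momentum density and (by strong convergence) $E(\tilde u^+)=E(u_0)$. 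Proposition \ref{prop_compact_support} then applies to $\tilde u^+$, so $\tilde x_+^+(t)$ is well-defined and $\widetilde{\mathcal{L}}^+(t):=\tilde u^+(t,\tilde x^+(t)+\tilde x_+^+(t))$ is meaningful. Continuity of $\mathcal{L}$ (from Corollary \ref{prop_limit}) combined with the convergences above yields $\widetilde{\mathcal{L}}^+(t)=\lim_n\mathcal{L}(t+t_n)=u_+$ for every $t\in\R$. Thus $\widetilde{\mathcal{L}}^+$ is constant, \eqref{lyap_derivative} applied to $\tilde u^+$ forces $\tilde u^+(t,\cdot)$ to be a peakon of amplitude $u_+$ at every $t$, and inserting this ansatz into the ODE system \eqref{multipeak} gives $\tilde u^+=\sqrt{c^+}\,\varphi(\cdot-c^+t-p_0^+)$ with $c^+=u_+^2$.

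In particular $E(u_0)=E(\tilde u^+)=2u_+^2$. Repeating the construction with $t_n\to-\infty$ (invoking the space-time reversal symmetry, if convenient) produces a second peakon limit with $2u_-^2=E(u_0)=2u_+^2$, so $u_-=u_+$. Since $\mathcal{L}$ is monotone with equal limits at $\pm\infty$, it is constant on $\R$, and \eqref{lyap_derivative} then says $u(t,\cdot)$ is a peakon at every $t$; the single-peakon dynamics from \eqref{multipeak} identify $u(t,x)=\sqrt{c^\star}\,\varphi(x-x_0-c^\star t)$ with $c^\star=E(u_0)/2$, concluding the proof. The main obstacle I anticipate is the passage to the limit: one must simultaneously track the drifts $x(t+t_n)-x(t_n)$ to keep the translates uniformly localized, and upgrade weak $H^1$-compactness to strong convergence so that $E(\tilde u^\pm)=E(u)$. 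The uniform exponential decay of Proposition \ref{MT3}, the continuity statement Theorem \ref{theorem_gwp}(2), and the lower-semicontinuity Lemma \ref{lem_measures_2} are exactly the ingredients that make this argument run, in the spirit of Molinet's treatment of the Camassa--Holm case.
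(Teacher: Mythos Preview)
Your approach diverges from the paper's in the mechanism used to show $\mathcal L(t)$ is constant. The paper never builds limit solutions here: it applies the space--time reflection $v(t,x):=u(-t,-x)$, obtains a second right-endpoint function $\widetilde x_+$ for $v$, and observes that both $t\mapsto x(t)+x_+(t)$ and $t\mapsto x(t)-\widetilde x_+(-t)$ are characteristics of $u^2$. If the limits $c_\pm,\widetilde c_\pm$ did not all coincide, the speed gap between these two characteristics would force $x_+(t)+\widetilde x_+(-t)\to+\infty$, contradicting the uniform upper bound on both endpoint functions coming from Proposition \ref{prop_compact_support}. Once $\mathcal L\equiv\sqrt{c_+}$, the paper concludes by the pointwise inequality of Lemma \ref{tech_ineq_lem} (the identity $p*(3vv_x^2+5v^3)\ge 2v^3$), rather than the vanishing criterion for $p_+\!*\!(u-u_x)^3$ you isolate; your identity \eqref{lyap_derivative} is a nice sharpening, but it is not what the paper uses.

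The substantive gap in your argument is the assertion $\widetilde{\mathcal L}^+(t)=\lim_n\mathcal L(t+t_n)$. This requires that the right endpoint of $\operatorname{supp} y_n(t,\cdot)$ converge to the right endpoint of $\operatorname{supp}\tilde y^+(t,\cdot)$, and weak (or even tight) convergence of non-negative measures does not yield this: mass can leak from the rightmost part of the support in the limit (think of $\delta_0+\tfrac1n\delta_1\rightharpoonup\delta_0$). The only inequality one gets for free is $\tilde x_+^+(t)\le\liminf_n x_+(t+t_n)$, which translates into $\widetilde{\mathcal L}^+(t)\ge u_+$, not equality; this is too weak to close your energy-matching step $E(u_0)=2u_+^2=2u_-^2$. ``Continuity of $\mathcal L$'' from Corollary \ref{prop_limit} is continuity in $t$ along a fixed solution, not continuity of the map $u\mapsto u(\text{right endpoint of }\operatorname{supp} y)$ under $H^1$-limits, so it does not bridge the gap. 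A second, smaller issue is that the strong $C_{\rm loc}(\R,H^1)$ convergence you invoke (needed for $E(\tilde u^+)=E(u_0)$) requires an argument beyond Theorem \ref{theorem_gwp}(2); one must use the uniform exponential decay plus $\mathrm{BV}$ compactness of $u_x$ and dominated convergence, none of which you spell out. The paper's reflection trick sidesteps both difficulties entirely.
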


\begin{proof}
First of all, we recall that by space-time reflection invariance we know that if $u(t,x)$ is a solution to \eqref{nov_eq_2}, then so is $v(t,x):=u(-t,-x)$. Moreover, notice that from the definition of $v(t)$ it is direct to check that \[
v\in C(\R,H^1(\R)) \quad \hbox{and}\quad v-v_{xx}\in C_{ti}(\R,\mathcal{M}_b^+),
\]
and hence $v$ is a $H^1$-almost localized solution of \eqref{nov_eq_2} localized at $\widetilde{x}(\cdot):=-x(-\cdot)$ and the same mapping $\varepsilon\mapsto R_\varepsilon$. On the other hand, by applying Lemma \ref{integral_line_nov} and Corollary \ref{prop_limit} we deduce the existence of a positive constant $\widetilde{r}\in\R$ and a $C^1$ function $\widetilde{x}_+:\R\to(-\infty,\widetilde{r}]$, such that \[
\lim_{t\to\pm\infty} v\big(t,\widetilde{x}(t)+\widetilde{x}_+(t)\big)=\sqrt{\widetilde{c}_\pm} \quad \hbox{or equivalently} \quad \lim_{t\to\mp\infty} u\big(t,x(t)-\widetilde{x}_+(-t)\big)=\sqrt{\widetilde{c}_\pm}.
\]
\textbf{Step 1:} We claim that this implies $c_+=c_-=\widetilde{c}_+=\widetilde{c}_-$. Indeed, first of all notice that from the monotonicity of $t\mapsto u(t,x(t)+x_+(t))$ we have $\widetilde{c}_-\leq\widetilde{c}_+$ and $c_-\leq c_+$. Now, let us prove by contradiction that $c_+\leq \widetilde{c}_-$. In fact, if this were not true, then there would exists $t_0\in\R$ such that for all $t\geq t_0$ we would have
\begin{align}\label{contr_proof_mt2}
u\big(t,x (t)-\widetilde{x}_+(-t)\big)<u \big(t,x (t)+x_+(t)\big)-\varepsilon,
\end{align}
for some $\varepsilon>0$. On the other hand, notice that by Lemma \ref{integral_line_nov} it holds
\[
x(t)+x_+(t)=q(t-t_0,x(t_0)+x_+(t_0)),\] 
and 
\[
x(t)-\widetilde{x}_+(-t)=q(t-t_0,x(t_0)-\widetilde{x}_+(-t_0)).
\]
Thus, by using \eqref{def_q_nov} and \eqref{contr_proof_mt2} we obtain \begin{align*}
x_+(t)+\widetilde{x}_+(-t)&=x_+(t_0)+\widetilde{x}_+(-t_0)+\int_{t_0}^t q_t(\tau-t_0,x(t_0)+x_+(t_0))d\tau
\\ & \quad -\int_{t_0}^t q_t(\tau-t_0,x(t_0)-\widetilde{x}_+(-t_0))d\tau
\\ & \geq \varepsilon(t-t_0)+x_+(t_0)+\widetilde{x}_+(-t_0).
\end{align*}
Since the right-hand side goes to $+\infty$ as $t\to+\infty$, this contradicts the fact that, by Proposition \ref{compact_support}, both $x_+(t)$ and $\widetilde{x}_+(t)$ are bounded from above. Notice that in the same fashion we also obtain that $\widetilde{c}_+\leq c_-$, what ends the proof of the claim. Therefore, we conclude that \[
u\big(t,x(t)+x_+(t)\big)\equiv \sqrt{c_+} \quad \forall t\in\R.
\]
\textbf{Step 2:} Now, we claim that this forces $u(t)$ to be a peakon. We proceed by contradiction, that is, let us assume that $u(t)$ is not a peakon, and hence by Lemma \ref{tech_ineq_lem}, inequality \eqref{point_conv} is strictly satisfied. We claim that this forces the following strict inequality to hold at $x=x(t)+x_+(t)$
\begin{align}\label{ineq_claim_mt}
-\dfrac{1}{2}u u_x^2+u^3-p*\left(\dfrac{3}{2}u u_x^2+u^3\right)-\dfrac{1}{2}p_x*u_x^3<-2p*u^3, \quad \forall t\in\R. 
\end{align}
In fact, by using Lemma \ref{tech_ineq_lem} we know that for all $t\in\R$ it holds \[
u^3\big(t,x(t)+x_+(t)\big)-p*\left(\dfrac{3}{2}u u_x^2+\dfrac{5}{2}u^3\right)\big(t,x(t)+x_+(t)\big)<0.
\]
On the other hand, since $\vert v_x\vert\leq v$ for any $v\in Y_+$ we have \[
-\dfrac{1}{2}u u_x^2+\dfrac{1}{2}p*u^3+\dfrac{1}{2}p_x*u_x^3\leq 0.
\]
Hence, gathering the last two inequalities we conclude the claim. 

\medskip

\textbf{Step 3:} Now we intend to use the latter two steps to conclude that $u(t)$ must to be a peakon. We recall that we are still in the contradiction argument of Step $2$, so inequality \eqref{ineq_claim_mt} holds.

\medskip

In fact, by using \eqref{equality_supp} together with formula \eqref{dt_charact_u} and the previous claim we obtain \[
\dfrac{d}{dt}u_x\big(t,x (t)+x_+(t)\big)< -2p*u^3 \quad \hbox{or equivalently} \quad  \dfrac{d}{dt}u \big(t,x (t)+x_+(t)\big)> 2p*u^3.
\]
On the other hand, recall that if $u(t)\in Y_+$, then $u(t)\geq 0$ on $\R$. Therefore, the latter inequality together with the non-negativity of $u(t)$ and $p(x)$ implies that  \[
\dfrac{d}{dt}u\big(t,x(t)+x_+(t)\big)>0, \ \hbox{ for all } \ t\in\R,
\]
but this contradicts the fact that $u\big(t,x(t)+x_+(t)\big)\equiv\sqrt{c_+}$. Therefore, $u(t)$ must to be a peakon. The proof is complete. 
\end{proof}

\section{Proof of Theorem \ref{MT1}}\label{sec_MT1}

\subsection{Modulation around peakons} In the sequel we shall closely follow the approach made by Molinet in \cite{Mo} (see also \cite{MaMe1,MaMe2,MaMeTs} for previous results using this approach for different equations).  From now on we assume we are in the context of Theorem \ref{MT1}, that is, from now on let us assume that there exists $c>0$ and $u_0\in Y_+$ such that \begin{align}\label{pequeno}
\Vert u_0-\sqrt{c}\varphi\Vert_{H^1}\leq \sqrt{c}\varepsilon^8, \quad \hbox{for some} \quad 0<\varepsilon<c.
\end{align}
Then, according to the orbital stability result for peakon soltuions (see \cite{LiLiQu}), there exists a function $\xi:\R\to\R$ such that the global solution $u(t)$ to equation \eqref{nov_eq_2} associated to $u_0$ satisfies \begin{align}\label{conclusion_orb_stab}
\sup_{t\in\R}\Vert u(t)-\sqrt{c}\varphi(\cdot-\xi(t))\Vert_{H^1}\lesssim\mathbf{c}\varepsilon^2, \quad \mathbf{c}:=\min\{\sqrt{c},\sqrt[8]{c}\},
\end{align}
where $\xi(t)\in\R$ corresponds to any maximum point of $u(t,\cdot)$ and the implicit constant only depends on\footnote{In \cite{LiLiQu} the implicit constant appearing in \eqref{conclusion_orb_stab} does depends on $\Vert u_0\Vert_{H^3}$. Nevertheless, it is easy to check that it can actually be sharpened to depend only on $\Vert u_{0,x}\Vert_{L^\infty}$. Here, since $u_0\in Y_+$ we have $\Vert u_{0,x}\Vert_{L^\infty}\leq \Vert u_0\Vert_{L^\infty}\leq \Vert u_0\Vert_{H^1}$. The interested reader can consult to \cite{Pa} for a simplification of this proof without the sign assumption of the momentum density.} $\Vert u_0\Vert_{H^1}$. Before going further we shall need a modulation lemma for solutions close to a peakon.
\begin{lem}\label{modulational_lemma}
There exists $\varepsilon_0>0$ small enough, $C>1$, $\sigma>0$ and $n_0\in\N$ such that if a solution $u\in C_{ti}(\R,Y_+(\R))$ to equation \eqref{nov_eq_2} satisfies \begin{align}\label{hip_lem_mod}
\sup_{t\in\R}\Vert u(t)-\sqrt{c}\varphi(\cdot-z(t))\Vert_{H^1}\leq \sqrt{c}\varepsilon_0,
\end{align}
for some function $z:\R\to\R$ then the following properties hold: There exists a unique $C^1$ function $x:\R\to\R$ such that \begin{align}\label{mod_rho_close}
\sup_{t\in\R}\vert x(t)-z(t)\vert<\sigma \quad \hbox{and}\quad \int_\R u(t)\big(\rho_{n_0}*\varphi'\big)(\cdot-x(t))=0, \quad \forall t\in\R,
\end{align} 
where $\rho_n$ is defined in \eqref{def_rho} and $n_0\in\N$ satisfies: \begin{align}\label{orth_cond_def}
\hbox{For all }\,-\tfrac{1}{2}\leq y\leq \tfrac{1}{2}, \quad \int_\R\varphi(\cdot-y)(\rho_{n_0}*\varphi')=0 \quad \iff \quad y=0.
\end{align}
Moreover, the function satisfies \begin{align}\label{uniform_bound_param_mod}
\sup_{t\in\R}\vert \dot{x}(t)-c\vert<\dfrac{c}{8}
\end{align}
Additionally, let $0<\varepsilon<c\varepsilon_0$, then the following property holds: \begin{align}\label{uniform_mod}
\hbox{if }\ \sup_{t\in\R}\Vert u(t)-\sqrt{c}\varphi(\cdot-z(t))\Vert_{H^1}<\tfrac{\varepsilon^2}{c^{3/2}}\quad \hbox{then} \quad \sup_{t\in\R}\Vert u(t)-\sqrt{c}\varphi(\cdot-x(t))\Vert_{H^1}\leq C\varepsilon.
\end{align}
\end{lem}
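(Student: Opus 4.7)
My approach is to apply the Implicit Function Theorem to the $C^1$ functional
\[
F(t,x) := \int_\R u(t,y)\,(\rho_{n_0}*\varphi')(y-x)\,dy,
\]
and to solve $F(t, x(t)) = 0$ in a neighborhood of the curve $x = z(t)$. Since $u \in C(\R, H^1(\R))\cap C^1(\R, L^2(\R))$ by Theorem \ref{theorem_gwp} and $\rho_{n_0}*\varphi' \in H^\infty(\R)$, the map $F$ is $C^1$ on $\R\times\R$, and inverting the implicit relation at each $t$ will yield both the $C^1$-regularity of $x(\cdot)$ and the formula $\dot x(t) = -\partial_t F(t,x(t))/\partial_x F(t,x(t))$.

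The first ingredient is the choice of $n_0$. Setting
\[
\Phi_n(y) := \int_\R \varphi(w-y)(\rho_n*\varphi')(w)\,dw,
\]
the parity properties ($\varphi$ even and $\rho_n * \varphi'$ odd) force $\Phi_n(0) = 0$ for every $n$. Differentiating under the integral and using $\rho_n*\varphi' \to \varphi'$ in $L^2$ as $n \to \infty$ gives $|\Phi_n'(0)| \to \|\varphi'\|_{L^2}^2 = 1$. Hence for $n_0$ sufficiently large, $|\Phi_{n_0}'|$ is bounded below by $1/2$ on $[-1/2,1/2]$, forcing $\Phi_{n_0}$ to be strictly monotone on this interval, and so $y = 0$ is its only zero there, which is precisely \eqref{orth_cond_def}.

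Next, decompose $u(t) = \sqrt{c}\,\varphi(\cdot - z(t)) + w(t)$ with $\|w(t)\|_{H^1} \leq \sqrt{c}\,\varepsilon_0$. A change of variables gives
\[
F(t,x) = \sqrt{c}\,\Phi_{n_0}\big(z(t)-x\big) + R(t,x), \qquad |R(t,x)| \leq \|w(t)\|_{L^2}\|\rho_{n_0}*\varphi'\|_{L^2} \lesssim \sqrt{c}\,\varepsilon_0,
\]
and similarly $\partial_x F(t,x) = -\sqrt{c}\,\Phi_{n_0}'\big(z(t)-x\big) + O(\sqrt{c}\,\varepsilon_0)$ in the same sense. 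On the strip $\{|x-z(t)|\leq \sigma\}$ we therefore have $|\partial_x F| \geq \sqrt{c}/4$ once $\varepsilon_0$ is chosen small enough compared to $\sigma$ and $|\Phi_{n_0}'(0)|$. The intermediate-value theorem then produces, for each $t$, a unique zero $x(t)$ of $F(t,\cdot)$ in this strip, with $|x(t)-z(t)| \lesssim \varepsilon_0$. Applying the classical IFT locally around each point $(t_0, x(t_0))$ (using $F\in C^1$ and $\partial_x F\neq 0$) and patching via this local uniqueness yields a globally defined $C^1$ function $x:\R\to\R$, establishing \eqref{mod_rho_close}.

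To estimate $\dot x(t) = -\partial_t F/\partial_x F$ I would insert $u_t = -u^2 u_x - p*(3uu_xu_{xx}+2u_x^3+3u^2u_x)$ into $\partial_t F$, integrate by parts to move all derivatives onto the smooth weight $\rho_{n_0}*\varphi'$, and re-expand $u = \sqrt{c}\,\varphi(\cdot-x(t)) + \tilde w$ with $\|\tilde w\|_{H^1}\lesssim \sqrt{c}\,\varepsilon_0$. The pure peakon contribution contributes exactly $-c^{3/2}\Phi_{n_0}'(0)$ to $\partial_t F$ and $-\sqrt{c}\,\Phi_{n_0}'(0)$ to $\partial_x F$, so their ratio is $c$, and the remainder terms produce errors of order $c\,\varepsilon_0$; this gives $|\dot x(t) - c| < c/8$ for $\varepsilon_0$ small. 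Finally, \eqref{uniform_mod} follows from the triangle inequality
\[
\|u(t) - \sqrt{c}\,\varphi(\cdot-x(t))\|_{H^1} \leq \|u(t) - \sqrt{c}\,\varphi(\cdot-z(t))\|_{H^1} + \sqrt{c}\,\|\varphi(\cdot-z(t)) - \varphi(\cdot-x(t))\|_{H^1},
\]
combined with the sharp bound $\|\varphi(\cdot-a)-\varphi(\cdot-b)\|_{H^1}^2 \lesssim |a-b|$ (caused by the corner of $\varphi$ at the origin) and the estimate $|x(t)-z(t)| \lesssim \|w(t)\|_{H^1}/\sqrt{c} \lesssim \varepsilon^2/c^2$ from the construction above. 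The main obstacle is precisely this $1/2$-H\"older loss in the $H^1$-translate estimate for the peakon profile: it is what dictates the square-root relationship between the smallness hypothesis and the conclusion, and hence the exponent $\varepsilon^2/c^{3/2}$ appearing in \eqref{uniform_mod}.
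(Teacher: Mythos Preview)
Your proposal is correct and follows essentially the same route as the paper: an Implicit Function Theorem argument for the orthogonality functional, a parity/mollifier argument to fix $n_0$ and obtain \eqref{orth_cond_def}, differentiation of the orthogonality condition combined with the equation to bound $\dot x - c$, and the $\tfrac12$-H\"older translate estimate $\|\varphi-\varphi(\cdot-a)\|_{H^1}\lesssim |a|^{1/2}$ for \eqref{uniform_mod}.

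The only notable organizational difference is where the IFT is applied. The paper views the functional as $Y_z:H^1(\R)\times\R\to\R$ and obtains the modulation as a map $\tilde x$ on an $H^1$-neighborhood of the peakon orbit; since $u\in C^1(\R,L^2)$ but only $C(\R,H^1)$, the paper then repeats the IFT in the $L^2$ setting and identifies the two implicit functions by uniqueness to deduce $C^1$ regularity of $t\mapsto x(t)$. You instead apply the finite-dimensional IFT directly to $F(t,x)$ on $\R^2$, using that $u_t\in C(\R,L^2)$ and $\rho_{n_0}*\varphi'\in H^\infty$ make $F$ jointly $C^1$. Your route avoids the two-topology step and is arguably cleaner; the paper's route has the small advantage that the modulation map $\tilde x$ is defined as a function of the profile, with constants manifestly independent of the particular solution, which is convenient when one later modulates limit objects.
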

\begin{proof}
The existence and regularity of $x(t)$ is a standard application of the Implicit Function Theorem. We postpone this proof for the appendix (see Section \ref{apendix_mod}).
\end{proof}
At this point, let us consider $\beta\in(0,c)$ fixed. We define $\mathbf{d}:=\max\{\mathbf{c}^{3/2},\mathbf{c}^{-3/2}\}$ and \begin{align}\label{varepsilon_star}
\varepsilon_*:=\tfrac{1}{2C}\min\left\{\tfrac{\beta}{2^8},\sqrt{c}\varepsilon_0,\tfrac{1}{2^{8}\mathbf{d}}\right\},
\end{align}
where $C>0$ is the constant involved in \eqref{uniform_mod}. Due to the orbital stability result, we infer that if $u_0\in Y_+$ satisfies \eqref{pequeno} with this $\varepsilon_*$, then  \eqref{conclusion_orb_stab} ensure us that \eqref{hip_lem_mod} is satisfied and hence \eqref{uniform_bound_param_mod} holds. Thus, we obtain that $\dot{x}(t)\geq \tfrac{3}{4}c$ and hence $u(t)$ fulfills the hypothesis of Lemma \ref{tech_lem_mon_exp} for any $\delta\in(0,1)$ satisfying $1-\delta\geq \tfrac{\beta}{4c}$. Notice that in particular we can choose $\delta=\tfrac{1}{3}$. Moreover, notice that by defining $\varepsilon^\star$ as \[
\varepsilon^\star=\tfrac{1}{C^8}\min\left\{\dfrac{1}{2^{10}\mathbf{d}},\dfrac{\varepsilon_0}{6\mathbf{d}}\right\}^8,
\]
we infer that hypothesis \eqref{smallness_hip} implies that inequality \eqref{pequeno} is satisfied with $\varepsilon_*$ as in \eqref{varepsilon_star}.

\subsection{Comments on the proof of Theorem \ref{MT1}}
Before going further let us sketch the ideas of the proof of Theorem \ref{MT1}. We shall proceed as follows: First, we start by studying limiting objects associated to $u(t,\cdot+x(t))$ where $u(t)$ corresponds to our original solution. We shall prove that these limit objects enjoy better properties than the solution itself. In particular, we shall prove that solutions associated to this class of limit functions corresponds to $H^1$-almost localized solutions, and hence they are peakons. 

\medskip

The main difficulty here in comparison with the Camassa-Holm and the Degasperis-Procesi equations is that our sequence $u(t_n,\cdot+x(t_n))$ associated to our original solution shall not be bounded in $Y_+$ due to the non-conservation of the momentum. In consequence, we shall not be able to use a general continuity result of the flow-map with respect to the weak topology for bounded sequences in $Y_+$, as it was the case for these last equations. Instead, we shall take advantage of the almost monotonicity result to prove that for a solution $u$ staying close enough to a peakon, the limit objects associated to  $t\mapsto u(t_n+t,\cdot+x(t_n+t))$ is uniformly exponentially decaying, and thus has a finite momentum. This shall be enough to ensure the weak continuity of the flow-map with respect to our sequence. 

\medskip

Once we know that the limit object corresponds to a peakon, we shall be able to slightly improve our previous strong convergence result in $H^{1^-}_{loc}$ to strong convergence in $H^1_{loc}$. Finally, with this latter property, together with the modulation lemma and the weak convergence in the whole space $H^1$, we shall conclude the proof of Theorem \ref{MT1}.

\subsection{Study of limit solutions}
In the rest of this paper we shall need to explicitly study the behavior of the solution $u(t)$ on both, the left and right part of the space. Let us start recalling the definition of the weight function $\Psi$ given in \eqref{psi_def}: \begin{align}\label{psi_def_2}
\Psi(x)=\dfrac{2}{\pi}\arctan\left(\exp(\tfrac{x}{6})\right), \quad \hbox{so that}\quad \Psi(x)\to 1 \ \hbox{ as }\ x\to+\infty.
\end{align}
Before going further we shall need to introduce some additional notation. For $v\in Y$ and $R>0$ we define the functionals $\mathcal{J}_l^R$ and $\mathcal{J}_r^R$ given by \begin{align*}
\mathcal{J}_r^R=\left\langle v^2+v_x^2,\Psi(\cdot-R)\right\rangle \ \hbox{ and }\ \, \mathcal{J}_l^R=\left\langle v^2+v_x^2,1-\Psi(\cdot+R)\right\rangle.
\end{align*}
Now we fix $t_0\in\R$ and let $\gamma=\tfrac{1}{3}$. Considering $z(t)=\tfrac{2}{3}x(t)$ we have that $z(t)$ satisfies condition \eqref{hip_z_dot} and hence we obtain \[
\mathcal{J}_r^R\big(u(t,\cdot+x(t))\big)\geq \mathrm{I}_{t_0}^R(t), \quad \forall t\leq t_0,
\]
where $\mathrm{I}_{t_0}^R(t)$ is the functional defined in Lemma \ref{tech_lem_mon_exp}. Moreover, notice that in particular we have $\mathcal{J}_r^R\big(u(t_0,\cdot+x(t_0))\big)=\mathrm{I}_{t_0}^R(t_0)$. Thus, by using \eqref{ineq_Itzero_It} we deduce \begin{align}\label{ineq_J_r}
\mathcal{J}_r^R\big(u(t_0,\cdot+x(t_0))\big)\leq \mathcal{J}_{r}^R\big(u(t,\cdot+x(t))\big)+C e^{-\frac{R}{6}}, \quad \forall t\leq t_0,
\end{align}
where $C>0$ is the constant appearing in \eqref{ineq_Itzero_It}. On the other hand,  for the sake of notation we also introduce the functional $\widetilde{\mathrm{I}}_{t_0}^R(t)$ given by
\[
\widetilde{\mathrm{I}}_{t_0}^R(t):=\left\langle u^2+u_x^2,1-\Psi\big(\cdot-\tfrac{1}{3}x(t_0)+R-\tfrac{2}{3}x(t)\big)\right\rangle=E(u)-\mathrm{I}_{t_0}^{-R}(t).
\]
Notice that due to the energy conservation together with inequality \eqref{ineq_Itzero_It} it holds \begin{align}\label{a_m_left_energy}
\widetilde{\mathrm{I}}_{t_0}^R(t)\geq\widetilde{\mathrm{I}}_{t_0}^R(t_0)-Ce^{-R/6}.
\end{align}
Therefore, for all $t\geq t_0$ we have \begin{align}\label{ineq_J_l}
\mathcal{J}_l^R\big(u(t,\cdot+x(t))\big)\geq \mathcal{J}_l^R\big(u(t_0,\cdot+x(t_0))\big)-Ce^{-\frac{R}{6}}.
\end{align}
With these definitions at hand, we can get into the proof of the main theorem of this paper. As we already discussed, the proof of theorem \ref{MT1} consists of studying limiting objects which enjoy better properties than the solution itself. The following property ensures that the $\omega$-limit set for the weak $H^1$-topology of the orbit of $u_0$ consists of initial data that give rise to $H^1$-almost localized solutions.

\begin{prop}\label{limit_prop_2}
There exists $\widetilde{\varepsilon}>0$ small enough such that for every $u_0\in Y_+$ satisfying \eqref{pequeno} with $\varepsilon<\widetilde{\varepsilon}$ the following holds: For any strictly increasing sequence $t_n\to+\infty$ there exists a function $u_0^\star \in Y_+$, a subsequence $t_{\sigma(n)}$ and a $C^1$-function $x:\R\to\R$ satisfying \eqref{mod_rho_close}-\eqref{uniform_bound_param_mod} such that \begin{align}\label{prop_conv}
u\big(t_{\sigma(n)},\cdot+x(t_{\sigma(n)})\big)\rightharpoonup u_0^\star \,\hbox{ in }\, H^1 \ \ \hbox{ and }\ \ u\big(t_{\sigma(n)},\cdot+x(t_{\sigma(n)})\big)\to u_0^\star \,\hbox{ in }\, H^{1^-}_{loc}.
\end{align}
Moreover, the solution $u^\star(t)$ of equation \eqref{nov_eq_2} associated to $u_0^\star$ is $H^1$-almost localized.
\end{prop}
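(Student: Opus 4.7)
I will first extract a subsequential weak $H^1$-limit $u_0^\star$ of the shifted orbit, show it lies in $Y_+$, then define $u^\star$ as the global solution launched from $u_0^\star$ via Theorem \ref{theorem_gwp}, and finally verify that $u^\star$ is $H^1$-almost localized. Throughout, $x(\cdot)$ denotes the modulation produced by Lemma \ref{modulational_lemma} applied to $u$, which is well-defined by the choice of $\varepsilon^\star$ in \eqref{varepsilon_star}.

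\medskip

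\textbf{Extraction of $u_0^\star$.} The orbital-stability bound \eqref{conclusion_orb_stab} makes $\{u(t_n,\cdot+x(t_n))\}$ uniformly bounded in $H^1(\R)$. Banach--Alaoglu together with the Rellich--Kondrachov compact embedding $H^1\hookrightarrow H^s_{loc}$ for $s<1$ then yield a subsequence along which both the weak $H^1$-convergence and the strong $H^{1^-}_{loc}$-convergence asserted in \eqref{prop_conv} hold, and the orthogonality relation of \eqref{mod_rho_close} passes to the limit.

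\medskip

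\textbf{The limit belongs to $Y_+$.} Since each momentum $y_n:=u(t_{\sigma(n)},\cdot+x(t_{\sigma(n)}))-u_{xx}(t_{\sigma(n)},\cdot+x(t_{\sigma(n)}))$ is a non-negative measure and the weak $H^1$-convergence implies $y_n\to u_0^\star-u_{0,xx}^\star$ in $\mathcal{D}'(\R)$, the limit is a non-negative distribution, hence a non-negative Radon measure. The delicate point is the finiteness of its total mass, which I plan to obtain from exponential decay of $u_0^\star$. Applying \eqref{ineq_J_r} with $t_0=t_{\sigma(n)}$ and reference time $t=0$, and noting that $z_{t_{\sigma(n)}}^R(0)=R+\tfrac{1}{3}x(t_{\sigma(n)})\to+\infty$ (since $\inf_{\R}\dot x\geq 3c/4$), dominated convergence forces $\mathrm{I}_{t_{\sigma(n)}}^R(0)\to 0$, hence
\[
\limsup_{n\to+\infty}\mathcal{J}_r^R\big(u(t_{\sigma(n)},\cdot+x(t_{\sigma(n)}))\big)\leq Ce^{-R/6}.
\]
Weak lower semicontinuity of the weighted $H^1$-seminorm propagates this bound to $u_0^\star$. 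A symmetric argument (using either the running-frame version of \eqref{ineq_J_l} for the left side or the $u(t,x)\mapsto u(-t,-x)$ symmetry of \eqref{novikov_eq}) yields the same $Ce^{-R/6}$-decay for $\mathcal{J}_l^R(u_0^\star)$. Combining these $L^2$-tail estimates with the pointwise inequality $|u_{0,x}^\star|\leq u_0^\star$, inherited via \eqref{positive_mom_1}--\eqref{positive_mom_2} from the positivity of $u_0^\star-u_{0,xx}^\star$, I will deduce pointwise exponential decay of $u_0^\star$. This places $u_0^\star$ in $L^1(\R)$ and, via integration by parts, forces $\int(u_0^\star-u_{0,xx}^\star)=\int u_0^\star<+\infty$, so $u_0^\star\in Y_+$.

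\medskip

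\textbf{Almost localization of $u^\star$ and main obstacle.} Let $u^\star\in C(\R,Y_+)$ be the global solution of \eqref{nov_eq_2} with $u^\star(0)=u_0^\star$ given by Theorem \ref{theorem_gwp}. For each $t\in\R$ I will apply the two previous steps to the time-translated sequence $\{u(t_{\sigma(n)}+t,\cdot+x(t_{\sigma(n)}+t))\}$: orbital stability still supplies the $H^1$-bounds, and the reference-time argument in \eqref{ineq_J_r}--\eqref{ineq_J_l} (valid once $n$ is large enough that $t_{\sigma(n)}+t\geq 0$) produces uniform-in-$t$ bounds of the form $\mathcal{J}_r^R,\mathcal{J}_l^R\leq Ce^{-R/6}$ on the associated weak limits. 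The hardest step, and the main obstacle of the proof, is then to identify this weak limit with a spatial translate of $u^\star(t)$: because the momentum is not conserved, the sequence need not be bounded in $Y_+$, so one cannot directly invoke the weak flow-continuity of Theorem \ref{theorem_gwp}(2) as Molinet does in the Camassa--Holm case. The uniform exponential decay just produced is precisely what resolves this obstruction---it renders the time-translated sequence bounded in $Y_+$---and then approximating the data by mollifications $\rho_m*u(t_{\sigma(n)}+t,\cdot+x(t_{\sigma(n)}+t))$, combined with Theorem \ref{theorem_gwp}(2), lets me pass to the limit in the flow and achieve the identification. Transferring the uniform tail bounds from the sequence to $u^\star(t,\cdot+x^\star(t))$ finally yields the $H^1$-almost localization.
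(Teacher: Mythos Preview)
Your argument has a genuine gap in the step claiming exponential decay of $u_0^\star$ on the \emph{left}. The right-side argument works because \eqref{ineq_J_r} bounds $\mathcal{J}_r^R$ at the late time $t_{\sigma(n)}$ by its value at the earlier reference time $t=0$, and the reference region escapes to $+\infty$. But the left-side situation is not symmetric: inequality \eqref{ineq_J_l} (equivalently \eqref{a_m_left_energy}) says the left energy is almost \emph{increasing} as time increases, so it gives only a \emph{lower} bound on $\mathcal{J}_l^R$ at time $t_{\sigma(n)}$, not the upper bound you need. The space-time inversion $u(t,x)\mapsto u(-t,-x)$ does not rescue this: under that symmetry the times $t_{\sigma(n)}\to+\infty$ become $-t_{\sigma(n)}\to-\infty$, and there is no fixed earlier reference time from which to apply the first inequality in \eqref{ineq_Itzero_It}. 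The paper obtains the left decay by an entirely different contradiction argument: if $\Vert u_0^\star\Vert_{H^1((-A,-r))}$ were bounded below, weak lower semicontinuity transfers this mass to $u(t_{n_k},\cdot+x(t_{n_k}))$ for large $k$; refining the subsequence so that $x(t_{n+1})-x(t_n)\geq 5(A+r)$, the almost-increase of the left energy \eqref{a_m_left_energy} then forces these disjoint packets of mass to accumulate and $\Vert u(t_n)\Vert_{H^1}\to+\infty$, contradicting energy conservation.

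Your identification step is also not justified as written. You established exponential decay for the weak \emph{limits}, not for the sequence $u(t_{\sigma(n)}+t,\cdot)$ itself, so the claim that this ``renders the time-translated sequence bounded in $Y_+$'' does not follow; and Theorem \ref{theorem_gwp}(2) requires both $Y_+$-boundedness and strong $H^1$-convergence of the data, neither of which you have. The paper proceeds differently: it extracts the full space-time limit $u^\star\in L^\infty(\R,H^1)$ at once, uses Aubin--Lions and Helly to get a.e.\ convergence of $u_{n_k}$ and $\partial_x u_{n_k}$, passes to the limit directly in \eqref{nov_eq_2} to see that $u^\star$ is a distributional solution with $u^\star\in C(\R,L^2)$, and only then invokes the decay (Step~1) and the resulting finite-mass property $y^\star\in \mathcal{M}_b^+$ (Step~2) to place $u^\star$ in the uniqueness class of Theorem \ref{theorem_gwp}. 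This order of operations is what makes the identification go through without any $Y_+$-bound on the sequence.
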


\begin{proof}
First of all notice that due to \eqref{pequeno}, \eqref{uniform_mod} and Lemma \ref{tech_lem_mon_exp} both inequalities \eqref{ineq_J_r} and \eqref{ineq_J_l} are satisfied by $u(t)$, the solution to \eqref{nov_eq_2} associated to $u_0$. Now, on the one hand due to \eqref{uniform_bound_param_mod} the family of functions $\{x(t_n+\cdot)-x(t_n)\}$ is uniformly equicontinuous, and hence by Arzela-Ascoli's Theorem we deduce the existence of a subsequence $\{t_{n_k}\}_{k\in\N}$ and a function $x^\star\in C(\R)$ such that, for all $T>0$ we have \begin{align}\label{mod_arz_asc}
x(t_{n_k}+\cdot)-x(t_{n_k})\to x^\star(\cdot) \,\hbox{ in }\, C([-T,T]).
\end{align}
Now we set $u_n(t):=u(t_{n}+t,\cdot+x(t_n+t))$. Notice that $u_n(t)$ defines a bounded sequence in $C(\R,H^1(\R))$ with $\{y_n\}_{n\in\N}$ bounded\footnote{This is a consequence of the fact that for all $t\in\R$ the momentum density belongs to $y(t)\in \mathcal{M}^+_b$ together with the fact that $u,u_x\in L^\infty(\R,L^\infty(\R))$.} in $L^\infty(\R,\mathcal{M}_{loc}^+)$ and hence there exists a function \[
u^\star\in L^\infty(\R,H^1(\R)) \ \hbox{ with }\ (1-\partial_x^2)u^\star\in L^\infty(\R,\mathcal{M}_{loc}^+),
\]
and a subsequence $\{u_{n_k},y_{n_k}\}$ such that \[
u_{n_k}\rightharpoonup u^\star \ \hbox{ in }\ L^\infty(\R,H^1(\R)) \ \ \hbox{ and }\ \ y_{n_k}\rightharpoonup (1-\partial_x^2)u^\star \ \hbox{ in }\ L^\infty(\R,\mathcal{M}^+)
\]
On the other hand, since $\{\partial_tu_{n_k}\}$ defines a bounded sequence in $L^\infty(\R,L^2(\R))$, Aubin-Lions' compactness Theorem ensures us that, up to a subsequence, we have \[
u_{n_k}\to u^\star\ \hbox{ a.e. in } \ \R^2 .
\]
Moreover, recalling that for all $t\in\R$, $\{\partial_xu_{n_k}(t)\}$ is bounded in $\mathrm{BV}_{loc}$, from Helly's selection Theorem we deduce \[
\partial_x u_{n_k}\to u^\star_x \ \hbox{ a.e. in }\ \R^2.
\]
Since $\{u_{n_k}\}$ and $\{\partial_x u_{n_k}\}$ are uniformly bounded on $\R$ we can pass to the limit on the Novikov equation \eqref{nov_eq_2} to deduce that $u^\star$ also satisfies the equation in the distributional sense. In particular, we deduce \[
u^\star_t\in L^\infty(\R,L^2(\R)) \ \hbox{ and therefore } \ u^\star\in C(\R,L^2(\R)).
\]
Finally, notice that $\{\partial_tu_{n_k}\}$ defines a bounded sequence in $L^\infty(\R,L^2(\R))$, and hence we have that for any $\phi\in C^\infty_c(\R)$, the map $t\mapsto \langle u_k,\phi\rangle$ defines a bounded uniformly equicontinuous sequence of continuous functions. Thus, by Arzela-Ascoli's Theorem and by density of $C^\infty_c(\R)$ in $H^1(\R)$ we obtain that for any $T>0$ \begin{align}\label{proof_conv_h1}
u_{n_k}\rightharpoonup u^\star \ \hbox{ in } \ C_{ti}([-T,T],H^1(\R)),
\end{align}
in particular, $u_{n_k}(0)\rightharpoonup u^\star(0)$. Now, for the sake of simplicity we split the proof in two steps. The first of them is devoted to prove the time-uniform exponential decay of $u^\star(t)$, while in the second one we intend to prove that $(1-\partial_x^2)u^\star\in L^\infty(\R,\mathcal{M}_b^+)$. 

\medskip

Notice that once we prove the latter property, and due to the fact that $Y_+(\R) \hookrightarrow H^{3/2^-}(\R) $, we immediately conclude that \[
u^\star\in L^\infty(\R,H^{\frac{3}{2}^-}(\R)),
\]
which, combined with $u^\star\in C(\R,L^2(\R))$, ensures us that $u^\star \in C(\R,H^1(\R))$. Therefore, $u^\star(t)$ belongs to the uniqueness class given in Theorem \ref{theorem_gwp}, and hence $u^\star(t)$ is the solution of the Novikov equation given by Theorem \ref{theorem_gwp} associated to $u^\star(0)$.

\medskip

\textbf{Step 1:} We claim that the limit function $u^\star(t)$ has time-uniform exponential decay. First of all, notice that since by \eqref{proof_conv_h1} for all $t\in\R$ we have \[
u(t_{n_k}+t,\cdot+x(t_{n_k}+t))\rightharpoonup u^\star(t,\cdot+x^\star(t)) \ \, \hbox{ in }\ \, H^1(\R),
\]
we deduce that it is enough to prove the claim at time $t=0$. On the other hand, since we have uniform bounds at the right (see Lemma \ref{tech_lem_mon_exp}), we immediately obtain the exponential decay of $u^\star_0$ at the right. In fact, let us consider the time sequence $t_{0,n_k}=t_{n_k}$ given by the convergence results at the beginning of this proof. Then, it is enough to notice that, by using the definition of $\Psi$ in \eqref{psi_def} and due to the fact that $\dot{x}(t)>(1+\epsilon)\dot{z}(t)$ for $t\in\R$ for some $\epsilon>0$,  by taking the limit $t_0\to+\infty$ we infer \[
\mathrm{I}_{t_0}^R(0)\to 0 \ \hbox{ as }\ t_0\to+\infty, \quad \hbox{and hence} \quad \limsup_{k\to+\infty}\,\mathrm{I}_{t_{0,n_k}}^R(t_{0,n_k})\leq e^{-R/K}.
\]
On the other hand, notice that by weak convergence in $H^1$ we deduce that for all $R\gg 1$ sufficiently large so that \eqref{L_infty_bound_outside} holds, we have
\[
\int \Big((u_0^\star)^2+(u_{0,x}^\star)^2\Big)(\cdot-x^\star(0))\Psi(\cdot-R)dx\leq \liminf_{k\to+\infty}\,\mathrm{I}_{t_{0,n_k}}^R(t_{0,n_k})\leq e^{-R/K}.
\]
Therefore, by Sobolev's embedding we conclude the exponential decay of $u_0^\star$ at the right of $x^\star(0)$.

\medskip

It only remains to prove the decay of $u_0^\star$ on the left. In fact, we shall prove the following property: There exist some constants $\mathbf{C}>0$, $\widetilde{c}>0$ and $R\gg1$ such that for all $A\geq r\geq R$ we have \[
\Vert u^\star_0\Vert_{H^1((-A,-r))}\leq \mathbf{C}e^{-\widetilde{c}r}.
\]
Notice that the latter inequality together with Sobolev's embedding implies the exponential decay of $u_0^\star$. We proceed by contradiction, that is, let us suppose that for all $\mathbf{C},\widetilde{c}>0$ and all $R\gg 1$ there exist $A\geq r\geq R$ sufficiently large such that \[
\Vert u_0^\star\Vert_{H^1((-A,-r))}\geq \mathbf{C}e^{-\widetilde{c}r}+\varepsilon, \quad \hbox{for some }\,\varepsilon>0.
\]
Thus, let us consider $1\gg\widetilde{c}>0$ small enough and $\mathbf{C}>0$ to be specified later. Notice that by the weak convergence result \eqref{proof_conv_h1} and due to the sequentially weakly lower-semicontinuity of the $H^1$-norm we have \[
\liminf_{k\to+\infty}\Vert u(t_{n_k},\cdot+x(t_{n_k}))\Vert_{H^1((-A,-r))}\geq\Vert u_0^\star\Vert_{H^1((-A,-r))}.
\]
Therefore, there exists $\widetilde{T}\gg 1$  and $k_0\geq 1$ sufficiently large such that for all $k\geq k_0$ we have \[
t_{n_k}\geq \widetilde{T} \quad \hbox{and} \quad \Vert u(t_{n_k},\cdot+x(t_{n_k}))\Vert_{H^1((-A,-r))}\geq \mathbf{C}e^{-\widetilde{c}r}+\tfrac{1}{2}\varepsilon.
\]
Now we consider a refinement of this subsequence which, for the sake of simplicity, we shall denote it by $\{t_n\}_{n\in\N}$, satisfying $t_n\geq \widetilde{T}$ for all $n\in\N$ and such that $x(t_{n+1})-x(t_{n})\geq 5(A+r)$. Then, by the almost monotonicity of the energy at the left (see \eqref{a_m_left_energy}) we obtain
\begin{align}
&\Vert u(t_{n+1},\cdot+x(t_{n+1}))\Vert_{H^1((-\infty,-r))}\geq \nonumber
\\ & \qquad \geq \Vert u(t_{n+1},\cdot+x(t_{n+1}))\Vert_{H^1((-A,-r))}+\Vert u(t_{n},\cdot+x(t_{n}))\Vert_{H^1((-\infty,-r))}-Ce^{-r/6}\nonumber
\\ & \qquad \geq \mathbf{C}e^{-\widetilde{c}r}+\tfrac{1}{2}\varepsilon+\mathbf{C}e^{-\widetilde{c}r}+\tfrac{\varepsilon}{2}-Ce^{-r/6}\geq \tfrac{19}{10}\mathbf{C}e^{-\widetilde{c}r}+\varepsilon,\label{exp_decay_proof}
\end{align}
where we are considering $\widetilde{c}$ and $\mathbf{C}$ such that $Ce^{-r/6}<\tfrac{1}{10}\mathbf{C}e^{-\widetilde{c}r}$. Notice that repeating the argument above, due to the fact that $x(t_{n+2})-x(t_{n+1})\geq 5(A+r)$, we can bound from below the $H^1$-norm on $(-\infty,-r)$ at time $t_{n+2}$ by the $H^1$-norm on $(-A,-r)$ at the same time plus the $H^1$-norm at time $t_{n+1}$ on $(-\infty,-r)$, plus some small error term. Therefore, by an iterative argument we conclude that $\Vert u(t_n,\cdot+x(t_n))\Vert_{H^1}\to +\infty$ as $n\to+\infty$, contradicting the energy conservation of the equation. Hence, we obtain that $u_0^\star$ has exponential decay. 

\medskip

\textbf{Step 2:} For the sake of simplicity, from now on we denote by $y^\star(t):=(1-\partial_x^2)u^\star(t)$. Now, we intend to prove that $y^\star\in L^\infty(\R,\mathcal{M}_b^+)$. First of all, notice that in the same fashion as in the proof of \textit{Step 1}, since for any $\phi\in C_c(\R)$ and any $t\in\R$ we have \[
\langle y(t_{n_k}+t,\cdot+x(t_{n_k}+t)),\phi\rangle\to \langle y^\star(t,\cdot+x^\star(t)),\phi\rangle,
\]
we deduce that it is enough to prove the claim at $t=0$. On the other hand, notice that for every compact set $K\subset \R$ and any $k\in\N$ we have \[
\Vert y(t_{n_k},\cdot+x(t_{n_k}))\Vert_{\mathcal{M}(K)}\leq C,
\]
where the constant $C>0$ only depends on $K$ and $\Vert u_0\Vert_{H^1}$. Therefore, by using Helly's selection Theorem we obtain that $u_{0,x}^\star\in \mathrm{BV}_{loc}$ and hence $y_0^\star$ is a positive Radon measure (locally finite possibly with infinite total mass on $\R$).  

\medskip

Now, we intend to take advantage of Lemma \ref{lem_measures_2} so that  we shall be able to estimate $y_0^\star$ by approximating $u_0^\star$ by a sequence of smooth functions. Hence, we define the approximating sequence \[
u_{0,m}^\star:=\rho_m*u_{0}^\star\in H^\infty(\R)\cap Y_+(\R), \ \, \hbox{ so that } \ \, y_{0,m}^\star\rightharpoonup y_0^\star \ \hbox{ in } \ \mathcal{M} .
\] 
We emphasize again that the previous weak convergence is in the sense of Definition \ref{def_weakly_conv}. Now, notice that due to the positivity of $y_{0,m}^\star$ on $\R$ and by using Young's inequality, recalling that $\Vert \rho_m\Vert_{L^1}=1$, we infer that for all $m\in\N$ we have \[
\int y_{0,m}^\star=\int u_{0,m}^\star\leq \Vert u_0^\star\Vert_{L^1}. 
\]
Hence, by the sequential weak lower semicontinuity given in Lemma \ref{lem_measures_2} we conclude \[
\Vert y_0^\star\Vert_{\mathcal{M}}\leq \liminf_{m\to+\infty}\Vert y_{0,m}^\star\Vert_{L^1}\leq \Vert u_0^\star\Vert_{L^1}.
\]
Therefore, $y_0^\star$ belongs to the space of finite Radon measures $\mathcal{M}_b^+$. The proof is complete.
\end{proof}

\subsection{Proof of Theorem \ref{MT1}} 

Let $\{t_n\}_{n\in\N}$ be any strictly increasing time sequence satisfying that $t_n\to+\infty$. Then, by the previous property we have that there exists a subsequence $\{t_{n_k}\}_{k\in\N}$ and an element $u_0^\star\in Y_+$ such that the solution associated to $u_0^\star$ is $H^1$-almost localized, and hence by Theorem \ref{MT2} we infer the existence of $x_0\in\R$ and $c^\star>0$ such that \[
u_0^\star=\varphi_{c^\star}(\cdot-x_0).
\] 

\textbf{Step 1:} Now, as we discussed before, once we know that the asymptotic object corresponds to a peakon, we are able to improve our local strong convergence result. In fact, due to the local strong $L^2$ convergence we deduce that for all $K\subset \R $ compact we have \[
\lim_{k\to+\infty}\Vert u(t_{n_k},\cdot+x(t_{n_k}))-\varphi_{c^\star}\Vert_{L^2(K)}=0.
\]
On the other hand, due to the fact that $\vert v_x\vert\leq v$ for any $v\in Y_+$ we deduce \[
\liminf_{k\to+\infty}\Vert u_x(t_{n_k},\cdot+x(t_{n_k}))\Vert_{L^2(K)}\leq \lim_{k\to+\infty}\Vert u(t_{n_k},\cdot+x(t_{n_k}))\Vert_{L^2(K)}=\Vert \varphi_{c^\star}\Vert_{L^2(K)}
\]
Hence, by using again that $\Vert \varphi'\Vert_{L^2(K)}=\Vert \varphi\Vert_{L^2(K)}$ we obtain \[
\liminf_{k\to+\infty}\Vert u(t_{n_k},\cdot+x(t_{n_k}))\Vert_{H^1(K)}^2\leq2\Vert \varphi_{c^\star}\Vert_{L^2(K)}^2=\Vert \varphi_{c^\star}\Vert_{H^1(K)}^2,
\]
Thus, by a standard result in Functional Analysis we know that the weak convergence result together with the previous inequality implies that \begin{align}\label{strong_h1}
u(t_{n_k},\cdot+x(t_{n_k}))-\varphi_{c^\star}\to0 \ \hbox{ in }\  H^1_{loc} \  \hbox{ as } \  k\to+\infty.
\end{align}

\textbf{Step 2:} Our aim now is to prove strong $H^1$ convergence in $(-A,\infty)$ for any fixed $A>0$. In fact, first of all, notice that the weak convergence result \eqref{prop_conv} together with the uniform estimate \eqref{uniform_mod} and the definition of $\varepsilon_*$ implies that \[
\Vert \varphi_{c^\star}(\cdot-x_0)-\varphi_c\Vert_{H^1}\leq C\varepsilon_* \quad \hbox{and} \quad 
\vert c-c^\star\vert\leq C\varepsilon_*\leq \dfrac{c}{2^9},
\]
and hence, by using the local strong convergence \eqref{strong_h1} we infer that $\vert x_0\vert \ll 1$. On the other hand, notice that the weak convergence result \eqref{prop_conv} forces $u_0^\star$ to satisfy the orthogonality condition \eqref{mod_rho_close}. Therefore, by using \eqref{orth_cond_def} we obtain that $x_0$ has to be equal to zero. Finally, notice that the convergence result \eqref{strong_h1} together with \eqref{uniform_mod} implies that \[
\sqrt{c^\star}=\lim_{k\to+\infty}\max_{\R}u(t_{n_k}).
\]
Thus, defining $\rho(t)=\max_\R u(t)$ we deduce that as $k\to+\infty$ we have \[
u(t_{n_k},\cdot+x(t_{n_k}))-\rho(t_{n_k})\varphi\rightharpoonup0 \ \hbox{ in }\ H^1.
\]
Since this is the only possible limit we conclude that as $t\to+\infty$ we have \begin{align}\label{weak_conv_H1_peakon}
u(t,\cdot+x(t))-\rho(t)\varphi\rightharpoonup 0 \  \hbox{ in } \ H^1 \ \ \hbox{ and } \ \ u(t,\cdot+x(t))-\rho(t)\varphi\to 0 \  \hbox{ in } \ H^{1}_{loc}.
\end{align}
Now, we claim that the latter convergence result implies that for any fixed $A>0$, as $t\to+\infty$, the following convergence holds: \begin{align}\label{local_strong_h1}
u(t,\cdot+x(t))-\rho(t)\varphi\to0  \ \hbox{ in } \ H^1((-A,\infty)).
\end{align}
In fact, let $\delta>0$ be fixed and consider $R\gg1$ sufficiently large such that \[
\mathcal{J}_r^R\big(u(0,\cdot+x(0)\big)<\delta \quad  \hbox{ and } \quad Ce^{-R/6}<\delta,
\] 
where $C>0$ is the constant involved in \eqref{ineq_J_r}. Then, from the almost decay of the energy at the right \eqref{ineq_J_r} we infer that \[
\mathcal{J}_r^R\big(u(t,\cdot+x(t))\big)<2\delta, \ \hbox{ for all } \, t\in\R.
\]
Nevertheless, the latter inequality together with the local strong convergence in $H^1$ given in \eqref{local_strong_h1} immediately implies that, for any $A>0$ we have \begin{align}\label{H1_conv_right}
u(t,\cdot+x(t))-\rho(t)\varphi\xrightarrow{t\to+\infty}0 \ \hbox{ in } \ H^1((-A,\infty)).
\end{align}

\textbf{Step 3:} Now we intend to prove that $\rho(t)\to \sqrt{c^\star}$ as $t\to+\infty$. In fact, let $\epsilon>0$ arbitrarily small but fixed and consider $R\gg1$ sufficiently large such that $Ce^{-R/6}<\epsilon$. Then, by using \eqref{ineq_J_l} as well as the energy conservation we obtain that for all $t>t'$ we have \[
\int \big(u^2+u_x^2\big)(t)\Psi(x-x(t)+R)\leq\epsilon+ \int \big(u^2+u_x^2\big)(t')\Psi(x-x(t')+R).
\]
On the other hand, due to the strong convergence result \eqref{local_strong_h1} and the exponential localization of both $\varphi$ and $\Psi$, we infer that there exists $t_0\gg1$ sufficiently large such that for all $t\geq t_0$ we have \[
\left\vert\int \big(u^2+u_x^2\big)(t)\Psi(x-x(t)+R)-\rho^2(t)E(\varphi)\right\vert\leq\epsilon.
\]
Plugging the last two inequalities together we conclude that for any pair of times $(t,t')\in\R^2$ satisfying $t>t'>T$ we have \[
\rho^2(t)E(\varphi)\leq \rho^2(t')E(\varphi)+3\epsilon.
\]
Since $\epsilon>0$ was arbitrary, the latter inequality forces $\rho(t)$ to have a limit at $+\infty$ and thus to converge to \[
\lim_{t\to+\infty}\rho(t)=\sqrt{c^\star}.
\]

\textbf{Step 4:} Now let us prove that $\dot{x}(t)\to c^\star$ as $t\to+\infty$. For the sake of readability let start by introducing some notation. Let $v,w,w_{n_0}:\R\to\R$ the functions given by  \[
v(t):=u-\sqrt{c^\star}\varphi(\cdot-x(t)), \quad w:=\sqrt{c^\star}\varphi(\cdot-x(t)) \  \hbox{ and } \ w_{n_0}:=\sqrt{c^\star}(\rho_{n_0}*\varphi)(\cdot-x(t)).
\]
Then, by differentiating the orthogonality condition in \eqref{mod_rho_close} and recalling that $\varphi$ satisfies the equation $\varphi-\varphi''=2\delta$ we obtain \[
\int v_tw_{n_0,x}=\dot{x}\int v(t,x)w_{n_0}(t,x)dx-2\dot{x}\sqrt{c^\star}\int v(t,x)\rho_{n_0}(x-x(t))dx.
\]
On the other hand, by using that $\varphi$ solves \eqref{nov_eq_2} we infer that $w(t,x)$ satisfy the following equation:
\[
w_t+(\dot{x}-c^\star)w_x+w^2w_x=p_x\Big(w^3+\dfrac{3}{2}ww_x^2\Big)-\dfrac{1}{2}p*w_x^3
\]
Therefore, by using that $u(t)$ also solves \eqref{nov_eq_2}, by replacing $u=v+w$ and then using the equation satisfied by $w$ we obtain
\begin{align}\label{mod_huge_eq}
v_t-(\dot{x}-c^\star)w_x&=-(v+w)^2v_x-(v^2+2vw)w_x-\dfrac{1}{2}p*\big(v_x^3+3v_x^2w_x+3v_xw_x^2\big)
\\ & \quad +p_x*\Big(v^3+3v^2w+3vw^2+\dfrac{3}{2}v(v_x+w_x)^2+\dfrac{3}{2}v_x^2w+2v_xww_x\Big).\nonumber
\end{align}
Now, notice that due to \eqref{H1_conv_right} and the exponential decay of $w$ and $w_{n_0}$ we infer that
\[
\Vert v^2w_{n_0,x}\Vert_{L^1}+\Vert v_x^2w_{n_0,x}\Vert_{L^1}+\int \vert vw_{n_0}\vert dx+\int \vert v(t,x)\rho_{n_0}(x-x(t))\vert dx\to0 \ \hbox{ as }\ t\to+\infty.
\]
Therefore, by taking the $L^2$-inner product from equation \eqref{mod_huge_eq} against $w_{n_0,x}$ and noticing that $\langle w_x(t),w_{n_0,x}(t)\rangle_{L^2,L^2}\equiv \mathrm{constant}>0$ for all times $t\in\R$ we conclude \[
\dot{x}-c^\star\to 0 \ \hbox{ as }\ t\to+\infty.
\]
\textbf{Step 5:} Now we intend to prove the strong $H^1$ convergence on $(\beta t,+\infty)$. In fact, let us start by recalling that from \eqref{H1_conv_right} we have that as $t\to+\infty$ the following convergence holds \[
u(t,\cdot)-\varphi_{c^\star}(\cdot-x(t))\rightharpoonup0 \, \hbox{ in }\, H^1(\R) \ \, \hbox{ and } \ \, u(t,\cdot+x(t))-\varphi_{c^\star}(\cdot)\to0 \, \hbox{ in }\, H^1((-A,\infty)).
\]
Now, let $\eta>0$ arbitrarily small but fixed. Let us consider $R\gg1$ sufficiently large such that \begin{align}\label{smallness_varphi_psi_proof}
\Vert \varphi\Vert_{H^1\left(\left(-\infty,-\frac{R}{2}\right)\right)}^2<\eta \quad \hbox{and}\quad \Vert \Psi-1\Vert_{L^\infty\left(\left(\frac{R}{2},+\infty\right)\right)}<\eta,
\end{align}
Thus, by the previous convergence results we infer the existence of a time point $t_0>0$ sufficiently large for which $x(t_0)>R$ and such that for all $t\geq t_0$ we have \[
\Vert u(t,\cdot+x(t))-\varphi_{c^\star}\Vert_{H^1\left(\left(-\frac{R}{2},+\infty\right)\right)}<\eta.
\]
On the other hand, by using \eqref{smallness_varphi_psi_proof} and the latter inequality we deduce that for all $r\geq R$ and all $t\geq t_0$ we have
\begin{align}\label{energy_bound}
\left\vert E(\varphi_{c^\star})-\int \left(u(t,\cdot+x(t))\varphi_{c^\star}+u_x(t,\cdot+x(t))\varphi_{c^\star}'\right)\Psi(\cdot+r)\right\vert\lesssim \eta.
\end{align}
From now on we consider $z(t)=\tfrac{1}{2}\beta t$. Notice that with this choice of $z(t)$ and due to the fact that $x(t)$ satisfies \eqref{uniform_bound_param_mod}, by straightforward computations we deduce that $z(t)$ satisfies the hypothesis of Lemma \ref{tech_lem_mon_exp} with $1-\delta=\tfrac{\beta}{4c}$ and $\gamma=\tfrac{1}{4}$. Moreover, as we discussed at the beginning of this section, $u(t)$ satisfies the corresponding hypothesis of Lemma \ref{tech_lem_mon_exp} for such choice of $\delta$. Hence, by using inequality \eqref{ineq_Itzero_It} we obtain that for all $t\geq t_0$ we have
\begin{align*}
&\int \left(u^2+u_x^2\right)(t,\cdot)\Psi\left(\cdot -x(t_0)-\tfrac{\beta}{2}(t-t_0)+R\right)
\\ & \qquad \quad \leq Ce^{-R/6} +\int \left(u^2+u_x^2\right)(t_0,\cdot)\Psi\left(\cdot -x(t_0)+R\right),
\end{align*}
where the constant $C$ now depends on $\delta$. Now, we define the variable $v(t):=u(t)-\sqrt{c^\star}\varphi(\cdot-x(t))$ and notice that 
\begin{align*}
&\int (v^2+v_x^2)(t,\cdot)\Psi\left(\cdot-x(t_0)-\tfrac{\beta}{2}(t-t_0)+R\right)
\\ & \qquad \quad =\int \left(u^2+u_x^2\right)(t,\cdot)\Psi\left(\cdot -x(t_0)-\tfrac{\beta}{2}(t-t_0)+R\right)
\\ & \qquad \qquad +c^\star\int \left(\varphi^2+\varphi_x^2\right)(t,\cdot-x(t))\Psi\left(\cdot -x(t_0)-\tfrac{\beta}{2}(t-t_0)+R\right)
\\ & \qquad \qquad -2\sqrt{c^\star}\int \left(u(t)\varphi(\cdot-x(t))+u_x(t)\varphi'(\cdot-x(t))\right)\Psi\left(\cdot -x(t_0)-\tfrac{\beta}{2}(t-t_0)+R\right)
\\ & \qquad \quad =:\mathrm{I}+\mathrm{II}+\mathrm{III}.
\end{align*}
On the other hand, notice that for all $t\geq t_0$ we have \[
x(t)-x(t_0)-\tfrac{\beta}{2}(t-t_0)+R\geq R.
\]
Hence, by using inequality \eqref{energy_bound} and then the exponential decay of $\varphi$ we infer that
\begin{align*}
\mathrm{I}+\mathrm{II}+\mathrm{III}&\leq\int \left(u^2+u_x^2\right)(t_0,\cdot)\Psi\left(\cdot -x(t_0)+R\right)+Ce^{-R/6}
\\ & \quad+c^\star\int \left(\varphi^2+\varphi_x^2\right)(t_0,\cdot-x(t_0))\Psi\left(\cdot -x(t_0)+R\right)+Ce^{-R/6}
\\ & \quad -2\sqrt{c^\star}\int \left(u(t_0)\varphi(\cdot-x(t_0))+u_x(t_0)\varphi'(\cdot-x(t_0))\right)\Psi\left(\cdot -x(t_0)+R\right)+C\eta
\\ & \lesssim \int \big(v^2+v_x^2)(t_0,\cdot)\Psi(\cdot-x(t_0)+R)+e^{-R/6}+\eta
\\ & \lesssim \eta+e^{-R/6},
\end{align*}
where we have used the exponential decay of $\varphi$ to obtain the latter inequality. Therefore, by taking $R\gg1$ sufficiently large and $t_1>t_0$ such that $\beta t_1\geq x(t_0)+\tfrac{\beta}{2}(t_1-t_0)-R$, we conclude that for all $t\geq t_1$ we have
\[
\int (v^2+v_x^2)(t,\cdot)\Psi\left(\cdot-\delta t\right)\lesssim \eta,
\]
which completes the proof the claim.

\medskip

\textbf{Step 6:} Finally, it only remains to prove the convergence in $(-\infty,z)$ for any $z\in\R$. This is a consequence of a more general property, noticed by Molinet in \cite{Mo3}, ensuring that all the energy of solutions associated to initial data in $Y_+$ is traveling to the right. In fact, we shall prove the following lemma which immediately conclude the proof of the theorem.
\begin{lem}\label{traveling_energy_lem}
For any $u_0\in Y_+$ and any $z\in\R$, the corresponding solution $u\in C(\R,H^1(\R))$ to equation \eqref{nov_eq_2} associated $u_0$ satisfies \begin{align*}
\lim_{t\to+\infty}\Vert u(t)\Vert_{H^1((-\infty,z))}=0.
\end{align*}
\end{lem}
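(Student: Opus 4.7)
The plan is to adapt Molinet's argument from \cite{Mo3} for the Camassa-Holm case to the Novikov setting, exploiting the monotone rightward transport available for solutions in $Y_+$ combined with the almost monotonicity of the energy in Lemma \ref{tech_lem_mon_exp}. The first observation is a reduction from the $H^1$-norm on $(-\infty,z)$ to an $L^1$-norm. Since for $u\in Y_+$ the pointwise bound $|u_x|\leq u$ holds, and since $\|u(t)\|_{L^\infty}\leq \|u_0\|_{H^1}$ by Sobolev embedding and energy conservation, one has
\[
\|u(t)\|_{H^1((-\infty,z))}^2 \leq 2\int_{-\infty}^z u^2(t,x)\,dx \leq 2\|u_0\|_{H^1}\int_{-\infty}^z u(t,x)\,dx,
\]
so it suffices to prove $\int_{-\infty}^z u(t,x)\,dx \to 0$ as $t\to+\infty$.

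Next, by a smoothing and truncation procedure I would approximate $u_0$ by a sequence $u_{0,n}\in H^\infty(\R)\cap Y_+$ whose momenta $y_{0,n}$ are compactly supported, so that $u_{0,n}\to u_0$ in $H^1$ and $y_{0,n}\rightharpoonup y_0$ in $\mathcal{M}$. Theorem \ref{GWP_smooth} then provides smooth global solutions $u_n\in C(\R,H^\infty)\cap Y_+$, and the $H^1$-continuity of the flow map in Theorem \ref{theorem_gwp} allows the final estimate to be transferred to $u$. For each $u_n$, the flow $q_n$ associated to the velocity $u_n^2$ via $\partial_t q_n=u_n^2(q_n)$, $q_n(0,\cdot)=\mathrm{Id}$, is a non-decreasing $C^1$-diffeomorphism of $\R$, and the transport identity \eqref{eq_flow} confines $\supp y_n(t)\subset q_n(t,\supp y_{0,n})$. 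Writing $\ell_n(t):=q_n(t,\min \supp y_{0,n})$, a non-decreasing function, for every $x<\ell_n(t)$ the convolution formula for $p*y_n$ gives
\[
u_n(t,x)=\tfrac12 e^x\int_{\ell_n(t)}^\infty e^{-x'}y_n(t,x')\,dx'\leq \tfrac12 e^{x-\ell_n(t)}\|y_n(t)\|_{\mathcal{M}},
\]
and the claim reduces to showing $\ell_n(t)\to+\infty$ fast enough to dominate the growth of $\|y_n(t)\|_{\mathcal{M}}$.

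The heart of the argument is establishing this rightward escape of $\ell_n(t)$. I would apply Lemma \ref{tech_lem_mon_exp} with the straight line $x(t)=c_0 t$ for $c_0>\mathbf{b}\|u_0\|_{H^1}$, a choice that makes the $L^\infty$-hypothesis \eqref{L_infty_bound_outside} automatic from $\|u_n(t)\|_{L^\infty}\leq \|u_0\|_{H^1}$, together with a secondary curve $z(t)=(1-\gamma)c_0 t$, $\gamma\in(0,\delta)$. The resulting almost monotonicity of the right-energy, combined with the strict positivity of $y_n$ and the nonnegativity $\partial_t q_n=u_n^2\geq 0$ of the transport velocity, forces the bulk of $y_n(t)$ to drift rightward at a positive asymptotic speed via a contradiction/iteration argument: were $\ell_n(t)$ to remain bounded, the exponentially decaying tail of $u_n$ to the left of $\ell_n(t)$ would be incompatible with the energy that Lemma \ref{tech_lem_mon_exp} forces to be transported rightward together with the curve $z(t)$. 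Once a quantitative rate $\ell_n(t)\to+\infty$ is obtained, the tail bound above gives $\int_{-\infty}^z u_n(t,x)\,dx\to0$, and passing to the limit $n\to+\infty$ via Theorem \ref{theorem_gwp} concludes.

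The main obstacle is obtaining a quantitative rate for $\ell_n(t)\to+\infty$ matched with the possible growth of $\|y_n(t)\|_{\mathcal{M}}$. In contrast with Camassa-Holm, where $\int y$ is conserved, direct computation for the Novikov equation yields $\frac{d}{dt}\int y_n=-\tfrac12 \int u_{n,x}^3$, which using $|u_{n,x}|\leq u_n$ and Sobolev embedding is bounded only by an absolute constant, giving at worst linear-in-$t$ control on $\|y_n(t)\|_{\mathcal{M}}$. Thus $\ell_n(t)$ must be shown to grow at least logarithmically, and closing this estimate requires combining the $Ce^{-R/6}$ error in Lemma \ref{tech_lem_mon_exp} with a careful iteration over an increasing sequence of times in the spirit of Molinet's Camassa-Holm analysis, but adapted to the absence of a conserved momentum law.
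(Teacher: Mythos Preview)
Your proposal has a genuine gap, and the approach is quite different from the one the paper actually takes.

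The paper does \emph{not} track the leftmost point of $\supp y$. Instead it introduces, for each level $\gamma\in(0,\|u_0\|_{H^1}^2)$, the implicit function $x_\gamma(t)$ defined by
\[
\int\big(u^2+u_x^2\big)(t,x)\,\Psi(x-x_\gamma(t))\,dx=\gamma,
\]
and reduces the lemma to $x_\gamma(t)\to+\infty$. Differentiating this relation and using the energy identity (the computation \eqref{dt_I_J_i} from the proof of Lemma~\ref{tech_lem_mon_exp}) together with the pointwise inequality of Lemma~\ref{tech_ineq_lem}, namely $p*(3uu_x^2+5u^3)\ge 2u^3$, yields the clean differential inequality
\[
\dot{x}_\gamma(t)\ \ge\ \tfrac{2}{5}\int u^2(t,x)\,\Psi'(x-x_\gamma(t))\,dx\ >0.
\]
A short contradiction argument then finishes: if $x_\gamma$ had a finite limit, the inequality forces $\int u^2\Psi'(\cdot-x_\gamma^\infty)\to0$ along a time sequence, hence $u\to0$ locally, which is incompatible with the level-set definition for a slightly larger $\gamma'>\gamma$. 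No approximation by compactly supported momenta, no control of $\|y\|_{\mathcal{M}}$, and no quantitative escape rate are needed.

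Your route, by contrast, hinges on proving that $\ell_n(t)\to+\infty$ at a rate that beats the possible growth of $\|y_n(t)\|_{\mathcal{M}}$, and you correctly flag this as ``the main obstacle'' --- but you do not resolve it. The difficulty is real: $\dot\ell_n=u_n^2(t,\ell_n(t))$ has no a~priori positive lower bound, so $\ell_n$ can in principle stall. Your proposed use of Lemma~\ref{tech_lem_mon_exp} with a fast straight line $x(t)=c_0t$ gives almost-monotonicity of the energy to the right of a \emph{slower} curve $z(t)$; it says nothing about where $\supp y_n$ sits relative to those curves, so the contradiction you sketch (``the exponentially decaying tail would be incompatible with the energy transported rightward'') is not substantiated. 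Note also that Molinet's argument in \cite{Mo3}, which you invoke, is precisely the $x_\gamma$ level-set argument above, not a characteristic-tracking one; so the paper's method is already the adaptation you are looking for, and it sidesteps the non-conservation of momentum entirely by working with the conserved $H^1$ energy.
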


\begin{proof}[Proof of Lemma \ref{traveling_energy_lem}]
First of all notice that, for $\Psi$ defined in \eqref{psi_def_2}, for any time $t\in\R$ fixed the map \[
z\mapsto\int \big(u^2+u_x^2\big)(t,x)\Psi(\cdot-z)dx,
\]
defines a decreasing continuous bijection from $\R$ into $(0,\Vert u_0\Vert_{H^1}^2)$. Therefore, by setting any $0<\gamma<\Vert u_0\Vert_{H^1}^2$, we deduce that the map $x_\gamma:\R\to\R$ defined by the equation \begin{align}\label{def_xgamma}
\int \big(u^2+u_x^2\big)(t,x)\Psi(\cdot-x_\gamma(t))dx=\gamma,
\end{align}
is well-defined. Moreover, recalling that $u\in C(\R,H^1(\R))$ we infer from \eqref{def_xgamma} that $x_\gamma$ is a continuous function. Now, notice that in order to conclude the proof of the lemma it is enough to show that for any $\gamma\in(0,\Vert u_0\Vert_{H^1}^2)$ we have \begin{align}\label{lim_final}
\lim_{t\to+\infty} x_\gamma(t)=+\infty.
\end{align}
For the sake of readability we split the proof of the latter property in two steps.

\medskip

\textbf{Step 1:} First we claim that for any $\Delta>0$ and any $t\in\R$ we have \begin{align}\label{claim_step}
x_\gamma(t+\Delta)-x_\gamma(t)\geq \dfrac{2}{5}\int_t^{t+\Delta}\int u^2(t,x)\Psi'(\cdot-x_\gamma(t))dx >0.
\end{align}
First of all, notice that by continuity with respect to the initial data it is enough to prove the claim for solutions $u\in C^\infty(\R,H^\infty(\R))\cap L^\infty(\R,H^1(\R))$. On the other hand, as an application of the Implicit Function Theorem we obtain that $x_\gamma(t)$ is of class $C^1$. In fact, let us define the functional \[
\psi(v,z):=\int \big(v^2+v_x^2\big)\Psi(\cdot-z)dx.
\]
Notice that $\psi$ clearly defines a $C^1$ function on $H^1(\R)\times \R$. Moreover, notice that since any function $v\in Y_+\setminus\{0\}$ cannot vanish at any point $x\in\R$, we deduce that for any function $v\in H^\infty\cap Y_+$ and any $z\in\R$ we have \[
\dfrac{\partial\psi}{\partial z}=\int \big(v^2+v_x^2\big)\Psi'(\cdot-z)>0.
\]
Recalling equation \eqref{dt_I_J_i} from the proof of Lemma \ref{tech_lem_mon_exp}, we obtain 
\begin{align*}
\dot{x}_\gamma\int \big(u^2+u_x^2\big)\Psi'(\cdot-x_\gamma)&=\int u^2u_x^2\Psi'+\int \{p*(3uu_x^2+2u^3)\}u\Psi'+\int \{p_x*u_x^3\}u\Psi'.
\end{align*}
Now, due to the fact that $\vert v_x\vert\leq v$ for any $v\in Y_+$ we deduce $p*uu_x^2+p_x*u_x^3\geq 0$. On the other hand, since $u(t)$ is positive, from Lemma \ref{tech_ineq_lem} we infer
\[
p*(3uu_x^2+5u^3)\geq 2u^3 \ \hbox{ in particular } \ p*(2uu_x^2+2u^3)\geq \tfrac{4}{5}u^3.
\]
Hence, by using again that $\vert v_x\vert\leq v$ for any $v\in Y_+$ and the previous inequalities we infer that
\begin{align*}
2\dot{x}_\gamma\int u^2\Psi'(\cdot-x_\gamma)&\geq \int u^2u_x^2\Psi'+\dfrac{4}{5}\int u^4\Psi'.
\end{align*}
Therefore, due to the fact that $\Psi'$ is a non-negative function with $\Vert \Psi'\Vert_{L^1}=1$, by using H\"older's inequality we obtain
\begin{align*}
\dot{x}_\gamma(t)\geq \dfrac{2}{5}\int u^2\Psi'(\cdot-x_\gamma(t))dx.
\end{align*}
Integrating in time between $t$ and $t+\Delta$ we conclude the claim.

\medskip

\textbf{Step 2:} Finally, in this last step we intend to conclude the proof of \eqref{lim_final}. First of all notice that from the claim of the previous step we deduce, in particular, that $x_\gamma(\cdot)$ is increasing and hence it has a limit $x_\gamma^\infty\in \R\cup\{+\infty\}$, i.e. \begin{align*}
\lim_{t\to+\infty}x_\gamma(t)=x_\gamma^\infty.
\end{align*}
Therefore, the proof of \eqref{lim_final} is equivalent to prove that $x_\gamma^\infty=+\infty$. We proceed by contradiction, i.e. let us suppose that $x_\gamma^\infty\in\R$. Thus, this fact together with inequality \eqref{def_xgamma} and the fact that $\vert u_x\vert\leq u\leq \Vert u_0\Vert_{H^1}$ for all $(t,x)\in\R^2$ implies 
\begin{align}\label{contrad_2}
\lim_{t\to+\infty}\int\big(u^2+u_x^2\big)(t,x)\Psi(\cdot-x_\gamma(t))=\lim_{t\to+\infty}\int \big(u^2+u_x^2\big)(t,x)\Psi(\cdot-x_\gamma^\infty)=\gamma.
\end{align}
On the other hand, by taking $\Delta=1$, from \eqref{claim_step} and the convergence of $x_\gamma(t)$ we obtain 
\[
\lim_{t\to+\infty}\int_{t}^{t+1}\int u^2\Psi'(\cdot-x_\gamma(t))=\lim_{t\to+\infty}\int_t^{t+1}\int u^2\Psi'(\cdot-x_\gamma^\infty)=0.
\]
Notice that the latter equality together with the fact that $\vert v_x\vert\leq v$ for any $v\in Y_+$ implies, in particular, that there exists a sequence of times $t_n\to+\infty$ such that for any compact set $K\subset\R$ the following holds:
\begin{align}\label{zero_compact}
\lim_{n\to+\infty}\Vert u(t_n)\Vert_{L^\infty(K)}=0.
\end{align}
Now we choose any $\gamma<\gamma'<\Vert u_0\Vert_{H^1}$, arbitrary but fixed. Then, we consider the compact set \[
K:=[x^\infty_\gamma-M,x_\gamma^\infty+M],
\]
with $M\gg1$ sufficiently large such that $x_\gamma^\infty-M<x_{\gamma'}(0)$. Hence, by using \eqref{zero_compact}, the monotonicity of $t\mapsto x_{\gamma'}(t)$ and recalling that $x_{\gamma'}(0)<x_{\gamma}(0)$ we conclude \[
\lim_{n\to+\infty}\int \big(u^2+u_x^2\big)(t_n,x)\Psi(\cdot-x_{\gamma}^\infty)=\gamma'.
\]
However, this contradicts hypothesis \eqref{contrad_2} and hence the proof is complete.
\end{proof}

\section{Appendix}

\subsection{Proof of Lemma \ref{tech_lem_mon_exp}}\label{tech_lem_appendix}

The following computations can be made rigorously by standard approximation and density arguments by considering, for instance, the convolution of $u_0$ with the mollifiers family $\rho_n$ defined in \eqref{def_rho} and by using the second statement in Theorem \ref{theorem_gwp}. We refer to \cite{EM1} for a complete justification of this argument. 

\medskip

Our aim is to prove the first inequality in \eqref{ineq_Itzero_It} by integrating its time derivative. In fact, by direct differentiation from the definition of $\mathrm{I}_{t_0}(t)$ we obtain
\begin{align}
\dfrac{d}{dt}\mathtt{I}_{t_0}^R(t)&=2\int \big(uu_t+u_xu_{xt}\big)\Psi-\dot z(t)\int (u^2(t)+u_x^2(t)\big)\Psi'\nonumber
\\ & =:\mathrm{J}-\dot z(t)\int (u^2(t)+u_x^2(t)\big)\Psi'.\label{dt_I_thm}
\end{align}
On the other hand, by using both equations \eqref{novikov_eq} and \eqref{nov_eq_2}, after integration by parts we get
\begin{align*}
\mathrm{J}&=2\int \big(u_t-u_{txx}\big)u\Psi-2\int uu_{tx}\Psi'
\\ & =2\int \big(3uu_xu_{xx}+u^2u_{xxx}-4u^2u_x\big)u\Psi
\\ & \quad +2\int (u^2u_{xx}+2uu_x^2+p_x*(3uu_xu_{xx}+2u_x^3+3u^2u_x))u\Psi'
\\ & =4\int u^2u_x^2\Psi'+2\int u^4\Psi'+2\int \{p_x*(3uu_xu_{xx}+2u_x^3+3u^2u_x)\}u\Psi'.
\end{align*}
On the other hand, recalling that for any $L^2$ function $f:\R\to\R$ we have $p*f_x=p_x*f$, and by using that $p$ is the fundamental solution of $(1-\partial_x^2)$, we obtain
\begin{align*}
2p_x*(3uu_xu_{xx}+2u_x^3+3u^2u_x)=-2u^3-3uu_x^2+3p*uu_x^2+2p*u^3+p_x*u_x^3.
\end{align*}
Hence, by plugging this into \eqref{dt_I_thm} we get \begin{align}
\dfrac{d}{dt}\mathtt{I}_{t_0}^R(t)&=-\dot z(t)\int \big(u^2+u_x^2\big)\Psi'+\int u^2u_x^2\Psi'+\int \{p*(3uu_x^2+2u^3)\}u\Psi'+\int \{p_x*u_x^3\}u\Psi'\nonumber
\\ & =-\dot z(t)\int \big(u^2+u_x^2\big)\Psi'+\mathrm{J}_1+\mathrm{J}_2+\mathrm{J}_3.\label{dt_I_J_i} 
\end{align}
In order to bound $\mathrm{J}_i$, for $i=1,2,3$, we split $\R$ into two complementary regions related to the size of $u(t)$. In fact, we start by rewriting $\mathrm{J}_1$ as \[
\mathrm{J}_1=\int_{\vert x-x(t)\vert<R_0} u^2u_x^2\Psi'+\int_{\vert x-x(t)\vert>R_0} u^2u_x^2\Psi'=:\mathrm{J}_1^1+\mathrm{J}_1^2
\]
Now notice that \eqref{hip_z_dot} ensures that $\dot{x}(t)-\dot{z}(t)\geq \gamma c$ for all $t\in\R$, and hence by using the definition of $z_{t_0}^R(t)$ we deduce that for $\vert x-x(t)\vert <R_0$ we have 
\begin{align}\label{mod_distance}
x-z_{t_0}^R(t)=x-x(t_0)- R-z(t)+z(t_0)\leq R_0-R-\gamma c(t_0-t).
\end{align}
Therefore, due to the decay property of $\Psi'$, H\"older's inequality and by using Sobolev's embedding together with the conservation of the $H^1$-norm we obtain \[
\mathrm{J}_1^1\lesssim \Vert u_0\Vert_{H^1}^4e^{\frac{1}{6}(R_0-R-\gamma c(t_0-t))}.
\]
On the other hand, by using \eqref{L_infty_bound_outside} we infer that for all $t\leq t_0$ we have \[
\mathrm{J}_1^2\lesssim\Vert u\Vert_{L^\infty(\{\vert x-x(t)\vert\geq R_0\})}^2\int u_x^2\Psi'\lesssim \dfrac{(1-\delta)c}{2^6}\int u_x^2\Psi'.
\]
Thus, the latter integral can be absorbed by the first integral term of \eqref{dt_I_J_i}. Now, in order to bound $\mathrm{J}_2$ we proceed similarly by decomposing the space into two regions related to the size of $u(t)$. In fact, rewriting $\mathrm{J}_2$ as 
\[
\mathrm{J}_2=\int_{\vert x-x(t)\vert<R_0}\{p*(3uu_x^2+2u^3)\}u\Psi'+\int_{\vert x-x(t)\vert>R_0}\{p*(3uu_x^2+2u^3)\}u\Psi'=:\mathrm{J}_2^1+\mathrm{J}_2^2.
\]
Now, we recall that by standard Fourier analysis the operator 
$(1-\partial_x^2)^{-1}:L^2(\R)\to H^2(\R)$ is continuous. Thus, due to the decay property of $\Psi'$, by using Sobolev's embedding together with the conservation of the $H^1$-norm we obtain\[
\mathrm{J}_2^1=\int_{\vert x-x(t)\vert<R_0}\{p*(3uu_x^2+2u^3)\}u\Psi'\lesssim \Vert u_0\Vert_{H^1}^4e^{\frac{1}{6}(R_0-R-\gamma c(t_0-t))}.
\]
On the other hand, notice that since $\Psi'$ is positive and due to \eqref{psi_bound} we have
\begin{align}\label{trick_psi_prime}
(1-\partial_x^2)\Psi'\geq \dfrac{1}{2}\Psi', \quad \hbox{what implies that}\quad (1-\partial_x^2)^{-1}\Psi'\leq 2\Psi',
\end{align}
and hence, by using the previous inequality together with \eqref{L_infty_bound_outside}, we infer that \begin{align*}
\mathrm{J}_2^2&\lesssim \Vert u_0\Vert_{H^1(\vert x-x(t)\vert>R_0)}\Vert u_0\Vert_{H^1}\int u^2\Psi'
\lesssim \dfrac{(1-\delta)c}{2^6}\int u^2\Psi'.
\end{align*}
Therefore, we can absorb this term by the first integral on the right-hand side of \eqref{dt_I_J_i}, and hence it only remains to bound $\mathrm{J}_3$. Now, in order to bound it we proceed again by decomposing the space into two regions \[
\mathrm{J}_3=\int_{\vert x-x(t)\vert<R_0} \{p_x*u_x^3\}u\Psi'+\int_{\vert x-x(t)\vert>R_0} \{p_x*u_x^3\}u\Psi'=:\mathrm{J}_3^1+\mathrm{J}_3^2.
\]
Thus, due to the decay property of $\Psi'$, by using Sobolev's embedding together with the conservation of the $H^1$-norm we obtain
\[
\mathrm{J}_3^1:=2\int_{\vert x-x(t)\vert<R_0}\{p_x*u_x^3\}u\Psi'\lesssim \Vert u_0\Vert_{H^1}^4e^{\frac{1}{6}(R_0-R-\gamma c(t_0-t))}.
\]
On the other hand, by using \eqref{trick_psi_prime} and \eqref{L_infty_bound_outside} again, we obtain
\[
\mathrm{J}_3^2=2\int_{\vert x-x(t)\vert>R_0}\{p_x*u_x^3\}u\Psi'\lesssim \Vert u_0\Vert_{H^1(\vert x-x(t)\vert>R_0)}\Vert u_0\Vert_{H^1}\int u_x^2\Psi'\lesssim \dfrac{(1-\delta)c}{2^6}\int u_x^2\Psi'.
\]
Thus we can absorb this term by the first integral on the right-hand side of \eqref{dt_I_J_i}. Therefore, gathering all of these inequalities we infer that for $R>R_0$ there exists $C>0$ only depending on $\delta$, $\gamma$, $c$, $R_0$ and $\Vert u_0\Vert_{H^1}$ such that for all $t\leq t_0$ it holds \[
\dfrac{d}{dt}\mathrm{I}_{t_0}(t)\leq Ce^{-\frac{1}{6}(R+\gamma c(t_0-t))}-\dfrac{\dot{z}(t)}{2}\int \big(u^2+u_x^2\big)\Psi'.
\]
Finally, integrating between $t$ and $t_0$ we deduce that for all $t\leq t_0$ \[
\mathrm{I}_{t_0}(t_0)-\mathrm{I}_{t_0}(t)\leq Ce^{-\frac{R}{6}}.
\]
Notice that the second inequality in \eqref{ineq_Itzero_It} is obtained in exactly the same fashion, except for obvious modifications, so we omit it. The proof is complete. \qed

\medskip

\subsection{Proof of Proposition \ref{MT3}}\label{proof_MT_appendix}
First of all notice that in order to conclude the proof it is enough to show that $\mathrm{I}_{t_0}(t)\to 0$ as $t\to-\infty$. In fact, by plugging this limit property into the first inequality in \eqref{ineq_Itzero_It} we obtain \begin{align}\label{claim_I_exp}
\mathrm{I}_{t_0}(t_0)\leq Ce^{-\frac{R}{6}}.
\end{align}
Hence, let us prove that the limit equality holds. In fact, we start by spliting the space as before. In concrete, for $R_\varepsilon\gg1$ to be specified later, we decompose $\mathrm{I}_{t_0}(t)$ into  \[
\mathrm{I}_{t_0}(t)=\int_{\vert x-x(t)\vert<R_\varepsilon}\big(u^2+u_x^2\big)\Psi+\int_{\vert x-x(t)\vert>R_\varepsilon}\big(u^2+u_x^2\big)\Psi=\mathrm{I}_1+\mathrm{I}_2.
\] 
Now, on the one-hand, by the $H^1$-almost localized hypothesis, for any $\varepsilon>0$ we can choose $R_\varepsilon$ sufficiently large such that $\mathrm{I}_2<\tfrac{\varepsilon}{2}$. On the other hand, by monotonicity of $\Psi$ and by Sobolev's embedding we get
\begin{align}\label{I_1_ineq}
\mathrm{I}_1\lesssim\Vert u_0\Vert_{H^1}^2\Psi\big(R_\varepsilon+x(t)-x(t_0)-R-z(t)+z(t_0)\big).
\end{align}
Finally, recalling that $\dot x(t)-\dot z(t)\geq\gamma c$ for all $t\in\R$, we deduce that for $\vert x-x(t)\vert<R_\varepsilon$ we have \begin{align*}
x-z_{t_0}^R(t)=x-x(t_0)- R-z(t)+z(t_0)\leq R_\varepsilon-R-\gamma c(t_0-t).
\end{align*} 
Thus, by plugging the latter inequality into \eqref{I_1_ineq} and due to the fact that $\Psi(x)\to 0$ as $x\to -\infty$ we obtain that $\mathrm{I}_1\to0$ as $t\to-\infty$. Finally, notice that gathering \eqref{energy_right} with \eqref{claim_I_exp} we infer that for any $x_0\gg1$ sufficiently large and all $t\in\R$ it holds \[
\Vert u(t,\cdot+x(t))\Vert_{H^1(x_0,+\infty)}\leq Ce^{-\frac{x_0}{6}}.
\]
On the other hand, since the Novikov equation is invariant under space-time inversion, that is, invariant under the transformation $(t,x)\mapsto(-t,-x)$, the latter inequality also leads us to  \[
\Vert u(t,\cdot+x(t))\Vert_{H^1(-\infty,x_0)}\leq Ce^{-\frac{x_0}{6}}.
\]
Therefore, we conclude the proof by using Sobolev's embedding. The proof is complete. \qed

\medskip

\subsection{Proof of Lemma \ref{integral_line_nov}}\label{appendix_integral_line}

First of all notice that \eqref{equality_supp} follows directly from combining formulas \eqref{positive_mom_1}-\eqref{positive_mom_2} with \eqref{supp_y_prop_nov}. On the other hand, notice that the remaining part of the Lemma would follow directly from the definition of $x_+(\cdot)$ and $q(\cdot,\cdot)$ if the initial data were in $H^3(\R)$, and hence  we shall proceed by approximating the solution at some convenient time by smooth functions as before. Moreover, the proof follows by contradiction, i.e., from now on we assume that there exists $t^*\in\R$ such that \[
q(t^*,x(0)+x_+(0))=x(t^*)+x_+(t^*)+\varepsilon,
\]
for some $\varepsilon\neq 0$. Notice also that without loss of generality we can assume that $t^*\in(0,1)$. We split the proof in two cases regarding the sign of $\varepsilon$. \begin{enumerate}
\item \textbf{Case $\varepsilon<0$:} In this case we approximate the initial data $u_0$ by the family of smooth functions \[
u_{0,n}:=\rho_n*u_0\in H^\infty(\R)\cap Y_+(\R).
\] 
Now, by continuity and monotonicity of the map $x\mapsto q(t,x)$ we deduce that there exists $\delta>0$ such that \[
q(t^*,x(0)+x_+(0)+\delta)<x(t^*)+x_+(t^*)+\tfrac{1}{2}\varepsilon,
\]
On the other hand, notice that by definition of $\rho_n$, there exists $n_0\in\N$ sufficiently large such that for all $n\geq n_0$ we have \[
y_{0,n}\equiv 0 \,\hbox{ on }\,[x(0)+x_+(0)+\delta,\infty).
\]
Thus, denoting by $u_n(t)$ the solution to \eqref{nov_eq_2} associated to $u_{0,n}$, we consider the characteristic $q_n:\R\to\R$ defined by \[
\begin{cases}
\tfrac{d}{dt}q_n(t)=u_n^2\big(t,q_n(t,x)\big), & t\in\R,
\\ q_n(0)=x(0)+x_+(0)+\delta.
\end{cases}
\]
It is clear from the definition that $ y_n(0,\cdot)\equiv 0$ on $[q_n(0),+\infty)$. Therefore, by using formula \eqref{eq_flow} we obtain that \[
y_n(t^*,\cdot)\equiv0 \,\hbox{ on }\,[q_n(t^*),+\infty).
\] 
Finally, since $q_n(\cdot)\to q(\cdot,x(0)+x_+(0)+\delta)$ in $C([0,1])$ and by using \eqref{convergence_h1_ti} we conclude that, for $n\in\N$ sufficiently large, \[
y(t^*,\cdot )\equiv 0 \,\hbox{ on }\, [x(t^*)+x_+(t^*)+\tfrac{1}{4}\varepsilon,+\infty),
\]
what contradicts the definition of $x_+(t^*)$ due to the fact that $\varepsilon<0$. The proof of this case is complete.
\item \textbf{Case $\varepsilon>0$:}  In this case we approximate the solution at time $t^*$ by the family of smooth functions \[
u^*_{n}:=\rho_n*\big(u(t^*)\big)\in H^\infty(\R)\cap Y_+(\R).
\] 
Now, by continuity and monotonicity of the map $x\mapsto q(t,x)$ we deduce that there exists $\delta>0$ such that \[
q(t^*,x(0)+x_+(0)-\delta)>x(t^*)+x_+(t^*)+\tfrac{1}{2}\varepsilon,
\]
On the other hand, by denoting $\widetilde{u}_n(t)$ the solution to \eqref{nov_eq_2} such that $\widetilde{u}_n(t^*)=u_n^*$, we deduce by definition of $\rho_n$ that there exists $n_0\in\N$ sufficiently large such that, for all $n\geq n_0$ we have \[
\widetilde{y}_{n}(t^*)\equiv 0 \,\hbox{ on }\,\left[ q\big(t^*,x(0)+x_+(0)-\delta\big),\infty\right).
\]
Thus, as before, we consider the characteristic $q_n:\R\to\R$ defined by \[
\begin{cases}
\tfrac{d}{dt}q_n(t)=\widetilde{u}_n^2\big(t,q_n(t,x)\big), & t\in\R,
\\ q_n(t^*)=q(t^*,x(0)+x_+(0)-\delta).
\end{cases}
\]
It is clear from the definition that $ \widetilde{y}_n(t^*,\cdot)\equiv 0$ on $[q_n(t^*),+\infty)$. Hence, by using formula \eqref{eq_flow} we obtain that \[
\widetilde{y}_n(0,\cdot)\equiv0 \,\hbox{ on }\,[q_n(0),+\infty).
\] 
In the same fashion as before, by using \eqref{convergence_h1_ti} we conclude that, for $n\in\N$ sufficiently large we have \[
y(0,\cdot )\equiv 0 \,\hbox{ on }\, [x(0)+x_+(0)-\tfrac{1}{4}\delta,+\infty),
\]
what contradicts the definition of $x_+(0)$. The proof of this case is complete.
\end{enumerate}
Therefore, the proof is complete. \qed

\medskip

\subsection{Proof of Lemma \ref{tech_ineq_lem}}\label{tech_ineq_lem_appendix}

\textbf{Step 1:} Let us first prove inequality \eqref{point_conv}. In fact, let $x\in\R$ arbitrary but fixed. Then, on the one-hand, by using $a^2+b^2\geq 2ab$ we have 
\begin{align*}
e^{-x}\int_{-\infty}^xe^\eta\big(vv_x^2(\eta)+v^3(\eta)\big)d\eta&\geq 2e^{-x}\int_{-\infty}^xe^\eta v^2(\eta)v_x(\eta)d\eta
\\ & =\dfrac{2}{3}v^3(x)-\dfrac{2e^{-x}}{3}\int_{-\infty}^xe^\eta v^3(\eta)d\eta,
\end{align*}
and hence,
\begin{align*}
e^{-x}\int_{-\infty}^xe^\eta\left(vv_x^2(\eta)+\dfrac{5}{3}v^3(\eta)\right)d\eta&\geq \dfrac{2}{3}v^3(x).
\end{align*}
On the other hand, by using $a^2+b^2\geq -2ab$ we obtain \begin{align*}
e^{x}\int_x^{+\infty}e^{-\eta}\big(vv_x^2(\eta)+v^3(\eta)\big)d\eta&\geq 2e^{x}\int_{x}^{+\infty}e^{-\eta} v^2(\eta)v_x(\eta)d\eta
\\ & =\dfrac{2}{3}v^3(x)-\dfrac{2e^{x}}{3}\int_{x}^{+\infty}e^{-\eta} v^3(\eta)d\eta,
\end{align*}
and hence \begin{align*}
e^{x}\int_x^{+\infty}e^{-\eta}\big(vv_x^2(\eta)+\dfrac{5}{3} v^3(\eta)\big)d\eta\geq \dfrac{2}{3}v^3(x).
\end{align*}
Gathering both inequalities we conclude \eqref{point_conv}.\qed

\medskip

\textbf{Step 2:} Let us prove now the second part of the statement. In fact, from the proof of the first step we see that equality holds if and only if \[
vv_x^2+v^3=2v^2v_x \,\hbox{ a.e. on }\, (-\infty,x_0)\quad \hbox{or equivalently} \quad  v_x=v \,\hbox{ a.e. on}\, (-\infty,x_0).
\]
By solving the ODE and by continuity of $v$ this forces it to be $v(z)=Ce^z$ on $(-\infty,x_0)$. In the same fashion, equality holds in \eqref{point_conv} if and only if \[
vv_x^2+v^3=-2v^2v_x \,\hbox{ a.e. on }\, (x_0,+\infty)\quad \hbox{or equivalently} \quad  v_x=v \,\hbox{ a.e. on}\, (x_0,\infty
).
\] 
Thus, by solving the ODE we obtain that $v(z)=Be^{-z}$ on $(x_0,+\infty)$. Therefore, by continuity of $v$ in $\R$ we conclude that $v(x)=Ce^{-\vert x-x_0\vert}$. The proof is complete. \qed

\medskip

\subsection{Proof of Lemma \ref{modulational_lemma}}\label{apendix_mod}
We shall follow Molinet's proof \cite{Mo}. Let $n_0\in\N$ to be specified. Consider the functional given by the ortogonality condition we are looking for, that is, consider the functional given by \begin{align*}
Y_z(u,y):=\int u(\rho_{n_0}*\varphi)'(x-y-z).
\end{align*}
Notice that $Y_z:H^1(\R)\times \R\to \R$ 
defines a $\mathcal{C}^1$ functional in a neighborhood of $(\varepsilon,0)$. Moreover, since by definition both $\rho_{n_0}$ and $\varphi$ are even functions, we have $Y_z(0,\varphi(\cdot-z))\equiv 0$. On the other hand, notice that by direct computations we have
\begin{align}\label{derivative_funct_mod_proof}
\dfrac{\partial Y_z}{\partial y}\big(\varphi(\cdot-z),0\big)=\int \varphi'(\rho_{n_0}*\varphi')=\Vert \varphi'\Vert_{L^2}^2-\varepsilon(n_0)=1-\varepsilon(n_0),
\end{align}
where $\varepsilon(\cdot)$ satisfy $\varepsilon(n)\to 0$ as $n\to+\infty$. Hence, we are able to take $n_0\in\N$ large enough such that \[
\dfrac{\partial Y_z}{\partial y}\big(\varphi(\cdot-z),0\big)\geq \dfrac{1}{2}.
\]
Thus, from the Implicit Function Theorem we deduce the existence of $\epsilon>0$, $\delta>0$ small enough and a $\mathcal{C}^1$ function $y_z(\cdot):B(\varphi(\cdot-z),\epsilon)\to(-\delta,\delta)$ such that
\[
Y_z(u,y_z(u))=0, \quad \hbox{for all } \ u\in B(\varphi(\cdot-z),\epsilon),
\]
where $B(\varphi(\cdot-z),\epsilon)$ denotes the $H^1$-ball of radius $\epsilon$ centered at $\varphi(\cdot-z)$.
In particular, as a consequence of the Implicit Function Theorem we deduce the existence of a constant $C_0>0$ such that for any $\widetilde{\epsilon}\leq\epsilon$ we have \begin{align}\label{ift_consequences}
\hbox{if }\  u\in B(\varphi(\cdot-z),\widetilde{\epsilon}) \ \,\hbox{ then }\ \, \vert y_z(u)\vert\leq C_0\widetilde{\epsilon}.
\end{align}
Notice that by a translation invariance argument, the constants $\epsilon$, $\delta$ and $C_0$ are independent of $z\in\R$. Therefore, by uniqueness we can define a $C^1$ map 
\[
\widetilde{x}: \bigcup_{z\in\R}B\big(\varphi(\cdot-z),\widetilde{\varepsilon}_0\big)\to(-\delta,\delta) \quad \hbox{given by} \quad \widetilde{x}:=z+y_z(u) \, \hbox{ for } \, u\in B(\varphi(\cdot-z),\epsilon).
\]
On the other hand, notice that $Y_z$ is also of class $C^1$ viewed as a functional $Y_z:L^2(\R)\times \R\to\R$. Moreover, by the same computations as before we have \[
\dfrac{\partial Y_z}{\partial y}(\varphi(\cdot-z),y)=\int \varphi(x)(\rho_{n_0}''*\varphi)(x-z-y)dx 
\]
Hence, arguing in the same fashion as before we deduce the existence of a constant $\widehat{\epsilon}>0$ and a $C^1$ function \begin{align}\label{functional_l21}
\widehat{x}:\bigcup_{z\in\R}B(\varphi(\cdot-z),\eta)\to(-\kappa,\kappa),
\end{align}
such that for all $u\in \mathcal{B}$ we have  $Y_z(\varphi(\cdot-z-),y_z(u))=0$ if and only if $y_z(u)=\widehat{x}(u)$, where the set $\mathcal{B}$ is given by \begin{align*}
\mathcal{B}:=\bigcup_{z\in\R}B(\varphi(\cdot-z),\eta).
\end{align*}
Notice that in both \eqref{functional_l21} and the definition of $\mathcal{B}$ we are denoting by $B(\varphi(\cdot-z),\eta)$ the $L^2$-ball centered at $\varphi(\cdot-z)$. Thus, by setting $\varepsilon^*:=\min\{\epsilon,\eta\}$, due to the uniqueness given by the Implicit Function Theorem we conclude that $\widetilde{x}=\widehat{x}$ on $B_{H^1}(\varphi(\cdot-z),\varepsilon^*)$. Hence, $\widetilde{x}(\cdot)$ is also a $C^1$ function on $\cup_{z\in\R}B_{H^1}(\varphi(\cdot-z),\varepsilon^*)$ endowed with the $L^2$ norm. This fact shall be important in the sequel. Now, for the sake of readability we split the proof in several steps.

\medskip

\textbf{Step 1:} Our first aim is to prove \eqref{mod_rho_close}. In fact,  notice that as a direct consequence of \eqref{hip_lem_mod} we have \[
\left\{\tfrac{1}{\sqrt{c}}u(t,\cdot): \ t\in\R\right\}\subseteq \bigcup_{z\in\R}B\big(\varphi(\cdot-z),\widetilde{\varepsilon}_0\big)
\]
Therefore, by making $\varepsilon_0$ smaller if necessary, we can define $x(t):=\widetilde{x}(u(t))$, and hence $x(\cdot)$ satisfies both conditions in \eqref{mod_rho_close} by construction.

\medskip
 
\textbf{Step 2:} Now we intend to prove \eqref{uniform_mod}. In fact, it is enough to notice that from the hypothesis given in \eqref{uniform_mod} together with \eqref{ift_consequences} we conclude that for any $c>0$ and any $0<\varepsilon<c\varepsilon_0$ we have \[
\left\Vert \tfrac{1}{\sqrt{c}}u(t)-\varphi(\cdot-z(t))\right\Vert_{H^1}\leq \left(\dfrac{\varepsilon}{c}\right)^2+\sup_{\vert z\vert \leq C_0\left(\frac{\varepsilon}{c}\right)^2}\Vert \varphi-\varphi(\cdot-z)\Vert_{H^1}\lesssim\varepsilon^2+C_0^{1/2}\varepsilon.
\]
\textbf{Step 3:} Our aim now is to prove \eqref{uniform_bound_param_mod}. For the sake of simplicity let us start by defining some new variables:
\[
v(t):=u-\sqrt{c}\varphi(\cdot-x(t)), \quad w:=\sqrt{c}\varphi(\cdot-x(t)) \  \hbox{ and } \ w_{n_0}:=\sqrt{c}(\rho_{n_0}*\varphi)(\cdot-x(t)).
\]
Now we recall that in view of \eqref{nov_eq_2} we know that any solution $u\in C(\R, H^1(\R))$ of the Novikov equation satisfies \[
u_t\in C(\R,L^2(\R)) \quad \hbox{and hence} \quad u\in C^1(\R,L^2(\R)).
\]
This ensures that the mapping $t\mapsto x(t)=\widetilde{x}(u(t))$ is of class $C^1$ on $\R$. Thus, by differentiating the orthogonality condition \eqref{mod_rho_close} we obtain
\[
\int v_tw_{n_0,x}=\dot{x}\int v(t,x)w_{n_0,xx}(t,x)dx=-\dot{x}\int v_x(t,x)w_{n_0,x}(t,x)dx
\]
On the other hand, by using $\varphi-\varphi''=2\delta$ we infer that $w$ satisfies the following equation
\[
w_t+(\dot{x}-c)w_x+w^2w_x=p_x\Big(w^3+\dfrac{3}{2}ww_x^2\Big)-\dfrac{1}{2}p*w_x^3.
\]
Replacing the latter equation together with $u=v+w$ into \eqref{nov_eq_2} we obtain that $v$ satisfies
\begin{align*}
v_t-(\dot{x}-c)w_x&=-(v+w)^2v_x-(v^2+2vw)w_x-\dfrac{1}{2}p*\big(v_x^3+3v_x^2w_x+3v_xw_x^2\big)
\\ & \quad +p_x*\Big(v^3+3v^2w+3vw^2+\dfrac{3}{2}v(v_x+w_x)^2+\dfrac{3}{2}v_x^2w+2v_xww_x\Big).
\end{align*}
Taking the $L^2$-inner product on the latter equation against $w_{n_0,x}$ and by using \eqref{uniform_mod} we get \[
\left\vert(\dot{x}-c)\int w_xw_{n_0,x}dx+cO(\Vert v\Vert_{H^1}) \right\vert\leq O(\Vert v\Vert_{H^1})\lesssim cC\varepsilon_0.
\]
Therefore, by using \eqref{derivative_funct_mod_proof}, considering $n_0\in\N$ sufficiently large and $\varepsilon_0$ sufficiently small so that $C\varepsilon_0\ll1$, we conclude \eqref{uniform_bound_param_mod}.

\medskip

\textbf{Step 4:} Finally, it only remains to prove \eqref{orth_cond_def} for $n_0\in\N$ large enough. In fact, it is enough to notice that  \[
\int \varphi'(x)\varphi(x-y)dx=(1-y)e^{-y}.
\]
Hence, for $n_0\in\N$ large enough we have
\[
\dfrac{d}{dy}\int \varphi(\rho_{n_0}*\varphi)'(\cdot-y)=\int \varphi'(\rho_{n_0}*\varphi')(\cdot-y)\geq \dfrac{1}{4}e^{-1/2} \quad \hbox{on} \quad \left[-\tfrac{1}{2},\tfrac{1}{2}\right].
\]
Therefore, the mapping $y\mapsto \int_\R\varphi(\rho_{n_0}*\varphi)'(\cdot-y)$ is increasing on $[-\tfrac{1}{2},\tfrac{1}{2}]$. The proof is complete. \qed

\bigskip

\textbf{Acknowledgements :} The author is very grateful to professor Luc Molinet for encouragement me in solving this problem and for many remarkably useful conversations.

\medskip

\end{document}